\DeclareFontFamily{U}{mathx}{}
\DeclareFontShape{U}{mathx}{m}{n}{<-> mathx10}{}
\DeclareSymbolFont{mathx}{U}{mathx}{m}{n}
\DeclareMathAccent{\widehat}{0}{mathx}{"70}
\DeclareMathAccent{\widecheck}{0}{mathx}{"71}
\newcommand{\W}{\mathcal{W}}
\newcommand{\F}{\mathcal{F}}
\newcommand{\Hom}{\mathrm{Hom}}
\def\blfootnote{\gdef\@thefnmark{}\@footnotetext}
\theoremstyle{plain}
\newtheorem{thm}{Theorem}[section]
\newtheorem{prop}[thm]{Proposition}
\newtheorem{lem}[thm]{Lemma}
\newtheorem{cor}[thm]{Corollary}
\theoremstyle{definition}
\newtheorem{dfn}[thm]{Definition}
\newtheorem{exa}[thm]{Example}
\newtheorem*{ack}{Acknowledgements}
\theoremstyle{remark}
\newtheorem{rmk}[thm]{Remark}
\numberwithin{equation}{section}
\title[Cluster categories from Fukaya categories]{Cluster categories from Fukaya categories}
\author[Bae]{Hanwool Bae}
\address{Center for Quantum Structures in Modules and Spaces, Seoul National University, Seoul 08826, Republic of Korea}
\email{hanwoolb@gmail.com}
\author[Jeong]{Wonbo Jeong}
\address{Department of Mathematical Sciences, Research Institute in Mathematics, Seoul National University, Seoul 08826, Republic of Korea}
\email{wonbo.jeong@gmail.com}
\author[Kim]{Jongmyeong Kim}
\address{Center for Geometry and Physics, Institute for Basic Science (IBS), Pohang 37673, Republic of Korea}
\email{myeong@ibs.re.kr}
\begin{document}

\begin{abstract}
We show that the derived wrapped Fukaya category $D^\pi\mathcal{W}(X_{Q}^{d+1})$, the derived compact Fukaya category $D^\pi\mathcal{F}(X_{Q}^{d+1})$ and the cocore disks $L_{Q}$ of the plumbing space $X_{Q}^{d+1}$ form a Calabi--Yau triple. As a consequence, the quotient category $D^\pi\mathcal{W}(X_{Q}^{d+1})/D^\pi\mathcal{F}(X_{Q}^{d+1})$ becomes the cluster category associated to $Q$. One of its properties is a Calabi--Yau structure. Also it is known that this quotient category is quasi-equivalent to the Rabinowitz Fukaya category due to the work of Ganatra--Gao--Venkatesh. We compute the morphism space of $L_{Q}$ in $D^\pi\mathcal{W}(X_{Q}^{d+1})/D^\pi\mathcal{F}(X_{Q}^{d+1})$ using the Calabi--Yau structure, which is isomorphic to the Rabinowitz Floer cohomology of $L_{Q}$.
\end{abstract}

\maketitle

\tableofcontents

\blfootnote{\textit{2020 Mathematics Subject Classification}. Primary 53D37; Secondary 16E45, 18G80 \\
\indent\textit{Key Words and Phrases}. cluster categories, Calabi--Yau triples, Fukaya categories }

\section{Introduction}

A cluster category was introduced as a categorification of the cluster algebra. It can be used to investigate the corresponding cluster algebra using the categorical language. But the cluster structure itself is an interesting categorical structure on the triangulated category. It has been studied intensively since its appearance (see \cite{bmrrt06}, \cite{ami09}, \cite{iya-yan18} and references therein).

In symplectic geometry, there are many triangulated categories for a given symplectic manifold. Let $M$ be a {\it good} Liouville domain. Then we have the derived wrapped Fukaya category $D^{\pi}\mathcal{W}(M)$ and the derived compact Fukaya category $D^{\pi}\mathcal{F}(M)$, which are triangulated categories. By definition, $D^{\pi}\mathcal{F}(M)$ is a subcategory of $D^{\pi}\mathcal{W}(M)$ and we can take the quotient $D^{\pi}\mathcal{W}(M)/D^{\pi}\mathcal{F}(M)$, which is also a triangulated category.

In this paper, we consider the plumbing space $X_{Q}^{d+1}$ of $T^{*}S^{d+1}$ ($d \geq 2$) along a quiver $Q$ whose underlying  graph is a tree $T$. This space was investigated in \cite{ekh-lek17} and was shown to have many nice properties, e.g., Koszul duality. We will show that the quotient category $D^{\pi}\mathcal{W}(X_{Q}^{d+1})/D^{\pi}\mathcal{F}(X_{Q}^{d+1})$ becomes a cluster category in the sense of \cite{iya-yan18}.

The following are the main lemma and theorem of this paper.

\begin{lem}
Let $\mathcal{C}$ be a split-closed algebraic triangulated category and $\mathcal{D} \subset \mathcal{C}$ be a thick subcategory.
Let $M \cong M_1 \oplus \cdots \oplus M_n \in \mathcal{C}$ be a basic silting object (a generator with non-positively supported endomorphism space) where each $M_i$ is indecomposable.
Suppose it satisfies the following conditions:
\begin{enumerate}
\item $\mathrm{Hom}_\mathcal{C}(M,M)$ is finite dimensional.
\item There is a collection $\{S_1,\dots,S_n\} \subset \mathcal{D}$ of objects which split-generates $\mathcal{D}$ such that
\begin{equation*}
\mathrm{Hom}_\mathcal{C}(M_i,S_j[p]) \cong
\begin{cases}
\mathbb{K} & (i=j \text{ and } p=0),\\
0 & (\text{otherwise}).
\end{cases}
\end{equation*}
\end{enumerate}
Then $(\mathcal{C},\mathcal{D},M)$ is an ST-triple.
If moreover $(\mathcal{C},\mathcal{D})$ is relative $(d+1)$-Calabi--Yau then $(\mathcal{C},\mathcal{D},M)$ is a $(d+1)$-Calabi--Yau triple.
\end{lem}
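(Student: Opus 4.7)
The plan is to check the defining axioms of an ST-triple (and then of a CY triple) in the sense of \cite{iya-yan18} directly from the hypotheses. The two main structural ingredients---a basic silting $M \in \mathcal{C}$ with indecomposable summands $M_i$, and a split-generating set $\{S_i\} \subset \mathcal{D}$ biorthogonal to the $M_i$ in the sharp sense of (2)---are already provided, so what remains is to upgrade the pointwise biorthogonality into the global structural compatibility demanded by \cite{iya-yan18}.

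First I would argue that $\{S_1, \dots, S_n\}$ is a \emph{simple-minded collection} inside $\mathcal{C}$, i.e.\ that $\Hom_\mathcal{C}(S_i, S_j[p]) = \K \delta_{ij}$ for $p = 0$ and vanishes for $p < 0$. The natural vehicle is the Koszul-type functor $\Hom_\mathcal{C}(M, -) \colon \mathcal{C} \to \mathrm{per}(\mathrm{End}_\mathcal{C}(M))$; by condition (2) it sends $S_i$ to the $i$-th simple $\mathrm{End}_\mathcal{C}(M)$-module, so the required Ext-vanishing reduces to the non-positive support of the endomorphism dga of the silting object $M$ together with standard Koszul duality for silting objects in algebraic triangulated categories. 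Condition (1) then upgrades to the finite-dimensionality of $\Hom_\mathcal{C}(X, Y)$ for arbitrary $X \in \mathcal{C}$ and $Y \in \mathcal{D}$ by d\'evissage: every $Y \in \mathcal{D}$ sits in a finite tower of triangles built from shifts and summands of the $S_i$ by split-generation, and each building block has finite-dimensional Hom from $M$ by (2).

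The main obstacle I anticipate is verifying the remaining compatibility between $\mathcal{D}$ and the silting $M$ demanded by the ST-triple axioms---essentially, that $\mathcal{D}$ realises the \enquote{simple modules} of the heart determined by $M$ inside $\mathcal{C}$ and that the two induce the expected bounded $t$-structure on $\mathcal{D}$. I would deduce this from a direct $t$-structure argument using that $\mathcal{C}$ is split-closed and algebraic, together with the fact that the simple-minded collection just produced is biorthogonal to the silting $M$. Once the ST-triple axioms are in place, the Calabi--Yau refinement is essentially formal: the hypothesis that $(\mathcal{C}, \mathcal{D})$ is relative $(d+1)$-Calabi--Yau supplies the functorial duality $\Hom_\mathcal{C}(X, Y) \cong \Hom_\mathcal{C}(Y, X[d+1])^\vee$ for $X \in \mathcal{C}$ and $Y \in \mathcal{D}$, which is precisely the additional datum distinguishing a $(d+1)$-CY triple from an ST-triple in \cite{iya-yan18}.
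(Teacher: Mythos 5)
Your overall strategy coincides with the paper's: transport everything along $\mathrm{Hom}_\mathcal{C}(M,-)$ to the derived category of the endomorphism dg algebra $R$ of the silting object, use condition (2) to identify the images of the $S_j$ with the simple modules over $H^0(R)$, and observe that the Calabi--Yau refinement is then formal. Your d\'evissage for finite-dimensionality of $\mathrm{Hom}_\mathcal{C}(M,Y)$, $Y\in\mathcal{D}$, is also fine.

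However, the step you defer to a ``direct t-structure argument'' is exactly where the content of the proof lies, and as written it is a gap. Conditions (2) and (3) of the ST-triple definition require that $(\mathcal{C}_M^{\leq 0},\mathcal{C}_M^{\geq 0})$ be a t-structure on $\mathcal{C}$, that $\mathcal{C}_M^{\geq 0}\subset\mathcal{D}$, and that the induced t-structure on $\mathcal{D}$ be bounded. The paper obtains all of this by: (i) using split-closedness and algebraicity to replace $\mathcal{C}$ by $D^\pi R$ with $R=\mathrm{hom}(M,M)$ non-positively graded, so that $(\mathcal{C}_M^{\leq 0},\mathcal{C}_M^{\geq 0})$ becomes the standard t-structure with heart $\mathrm{mod}_{H^0(R)}$; (ii) showing that this equivalence carries $\mathcal{D}$ \emph{onto} $D_cR$ --- this is the step that genuinely uses the hypothesis that $\{S_1,\dots,S_n\}$ split-generates $\mathcal{D}$, matched against the fact that a complete set of simples of the heart split-generates $D_cR$; and (iii) invoking the known criterion (Example \ref{ST-ex}, i.e.\ \cite[Lemma 4.15]{amy19}, \cite[Proposition 2.1]{kal-yan16}) that $(D^\pi R,D_cR,R)$ is an ST-triple once $R$ is non-positive, $H^0(R)$ is finite dimensional and $D_cR\subset D^\pi R$. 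In your t-structure step the split-generation hypothesis on $\{S_i\}$ plays no identifiable role, and without the identification $\mathcal{D}\simeq D_cR$ there is no way to see why $\mathcal{C}_M^{\geq 0}\subset\mathcal{D}$ or why the restricted t-structure is bounded. (Your preliminary step of proving that $\{S_i\}$ is a simple-minded collection is true but an unnecessary detour: once the images of the $S_j$ are known to be the simples of the heart, no further Ext computations among them are needed.)
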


\begin{thm} \label{thm:CYtriple}
$(D^{\pi}\mathcal{W}(X_{Q}^{d+1}),D^{\pi}\mathcal{F}(X_{Q}^{d+1}),L_{Q})$ is a $(d+1)$-Calabi--Yau triple where $L_{Q} = \oplus_{i \in Q_{0}} L_{i}$ is the direct sum of cocore disks (cotangent fibers of each $T^{*}S^{d+1}$) in $X_{Q}^{d+1}$. Therefore, $D^{\pi}\mathcal{W}(X_{Q}^{d+1})/D^{\pi}\mathcal{F}(X_{Q}^{d+1})$ becomes the cluster category associated to $Q$.
\end{thm}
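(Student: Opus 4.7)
The plan is to apply the preceding lemma with $\mathcal{C} = D^\pi\W(X_Q^{d+1})$, $\mathcal{D} = D^\pi\F(X_Q^{d+1})$, and $M = L_Q$. The derived wrapped Fukaya category is split-closed algebraic triangulated by construction, and the derived compact Fukaya category is a thick subcategory. The geometric content of the theorem thus reduces to three verifications: (a) that $L_Q$ is a basic silting generator of $\mathcal{C}$ with finite-dimensional degree-zero endomorphism algebra, (b) that there is a split-generating family $\{S_i\} \subset \mathcal{D}$ with $\Hom_\mathcal{C}(L_i,S_j[p]) \cong \K$ concentrated in degree $p=0$ exactly when $i=j$, and (c) that the pair $(\mathcal{C},\mathcal{D})$ carries a relative $(d+1)$-Calabi--Yau structure.

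For (a), generation of $\mathcal{C}$ by cocores of the Weinstein plumbing $X_Q^{d+1}$ is the theorem of Chantraine--Dimitroglou Rizell--Ghiggini--Golovko, equivalently Ganatra--Pardon--Shende, and the $L_i$ are indecomposable and pairwise non-isomorphic because they sit in distinct plumbing pieces. The key algebraic input is Ekholm--Lekili's identification of the endomorphism dga $CW^*(L_Q,L_Q)$ with the Ginzburg $(d+1)$-Calabi--Yau dga $\mathcal{G}_{d+1}(Q)$. For $d \geq 2$, the generators of $\mathcal{G}_{d+1}(Q)$ sit only in degrees $0$, $1-d$, and $-d$, so the endomorphism algebra is concentrated in non-positive degrees, and $H^0 \cong \K Q$, a finite-dimensional path algebra since $Q$ is a finite tree. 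This verifies both the silting property and the finite-dimensionality hypothesis.

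For (b), I would take $S_i$ to be the zero-section sphere $S^{d+1}$ of the $i$-th cotangent bundle factor in the plumbing. These compact exact Lagrangians exhaust the Lagrangian skeleton of $X_Q^{d+1}$ and split-generate $D^\pi\F(X_Q^{d+1})$ by standard generation results for compact Fukaya categories of cotangent-type Weinstein manifolds. A direct geometric argument shows that $L_i$ and $S_j$ are disjoint whenever $i \neq j$, while for $i=j$ the cotangent fiber $L_i$ meets the zero-section $S_i$ transversely in a single point; grading and action considerations then give the required Kronecker $\delta_{ij}$-pairing in Floer cohomology. For (c), the remaining and most delicate ingredient, I would invoke Ganatra's construction of the smooth Calabi--Yau structure on the wrapped Fukaya category of a Weinstein manifold, together with the refinement of Ganatra--Gao--Venkatesh interpreting its restriction to the compact subcategory as the boundary of a relative $(d+1)$-Calabi--Yau pair, with dimension matching the real half-dimension of $X_Q^{d+1}$. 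With (a)--(c) in hand, the preceding lemma gives that $(\mathcal{C},\mathcal{D},L_Q)$ is a $(d+1)$-Calabi--Yau triple, whence the Iyama--Yang characterization cited in the introduction immediately yields that $D^\pi\W(X_Q^{d+1})/D^\pi\F(X_Q^{d+1})$ is the cluster category associated to $Q$. I expect the main obstacle to lie in (c): assembling the smooth and proper Calabi--Yau data into a single relative Calabi--Yau pair with compatible sign and grading conventions, in a way that applies uniformly across all tree quivers $Q$ in the range $d \geq 2$.
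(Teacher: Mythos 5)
Your proposal is correct and follows essentially the same route as the paper: apply the main lemma (via Proposition \ref{prop:cytriple}) with the cocores $L_i$ as the basic silting object and the zero sections $S_i$ as the split-generating collection, citing \cite{cdrgg17} and \cite{abo-smi12} for generation, the Ekholm--Lekili identification of $\hom_\W(L_Q,L_Q)$ with the Ginzburg dg algebra for the silting and finiteness conditions, and Ganatra's smooth $(d+1)$-Calabi--Yau structure on $\W(X_Q^{d+1})$ for the relative Calabi--Yau property. Two small corrections: indecomposability of the $L_i$ comes from the one-dimensionality of $\Hom_{D^\pi\W}(L_i,L_i)$ (read off from the Ginzburg algebra, as in \cite{her16}), not from the $L_i$ lying in distinct plumbing pieces; and the relative Calabi--Yau property of $(D^\pi\W,D^\pi\F)$ requires only the smooth Calabi--Yau structure together with the proper/perfect duality of Proposition \ref{sm-prop} and Corollary \ref{CY-cor} (plus the identification of $D^\pi\F$ with $D_c\W$ furnished inside the proof of Lemma \ref{lem:main}), whereas \cite{GGV} enters only later, to identify the quotient with the Rabinowitz Fukaya category.
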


\begin{rmk}
A result similar to the above theorem appeared in \cite{lek-ued18} for the first time. They conjectured the same for some Milnor fibers and proved it in some cases using the homological mirror symmetry. We take a more direct approach to prove our main theorem. Indeed, we do not use the homological mirror symmetry, but only use the categorical data in the Fukaya categories.
\end{rmk}

Theorem \ref{thm:CYtriple} gives many categorical structures on the quotient category $D^{\pi}\mathcal{W}(X_{Q}^{d+1})/D^{\pi}\mathcal{F}(X_{Q}^{d+1})$. For example, $D^{\pi}\mathcal{W}(X_{Q}^{d+1})/D^{\pi}\mathcal{F}(X_{Q}^{d+1})$ has the $d$-Calabi--Yau structure. If we assume that the work of Ganatra--Gao--Venkatesh \cite{GGV}, this quotient category is equivalent to the derived Rabinowitz Fukaya category. Thus,  we have the following corollary.

\begin{cor}
The derived Rabinowitz Fukaya category $D^{\pi}\mathcal{RW}(X_{Q}^{d+1})$ of $X_{Q}^{d+1}$ is the cluster category associated to $Q$. In particular, $D^{\pi}\mathcal{RW}(X_{Q}^{d+1})$ is a $d$-Calabi--Yau triangulated category.
\end{cor}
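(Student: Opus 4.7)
The plan is to combine Theorem~\ref{thm:CYtriple} with the quasi-equivalence between the wrapped-compact quotient and the derived Rabinowitz Fukaya category established in \cite{GGV}. First, I would unpack what Theorem~\ref{thm:CYtriple} gives: by the general properties of $(d+1)$-Calabi--Yau triples (as developed in \cite{iya-yan18}), the Verdier quotient $D^{\pi}\mathcal{W}(X_{Q}^{d+1})/D^{\pi}\mathcal{F}(X_{Q}^{d+1})$ is automatically a cluster category in which the image of $L_{Q}$ is a cluster-tilting object whose endomorphism algebra is the cluster-tilted algebra associated to $Q$, and this quotient carries a canonical $d$-Calabi--Yau structure.

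The second step is to transport this structure through the Ganatra--Gao--Venkatesh quasi-equivalence
\begin{equation*}
D^{\pi}\mathcal{W}(X_{Q}^{d+1})/D^{\pi}\mathcal{F}(X_{Q}^{d+1}) \xrightarrow{\sim} D^{\pi}\mathcal{RW}(X_{Q}^{d+1}).
\end{equation*}
Because a triangulated equivalence automatically preserves cluster category structures and Calabi--Yau properties, the conclusions of the first step transfer verbatim to $D^{\pi}\mathcal{RW}(X_{Q}^{d+1})$, yielding both the identification with the cluster category of $Q$ and the $d$-Calabi--Yau property.

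The only genuine obstacle is external and has already been flagged in the text preceding the corollary: one must confirm that the plumbing space $X_{Q}^{d+1}$ lies in the class of Liouville domains to which \cite{GGV} applies. Granting this, the corollary is a purely formal consequence of Theorem~\ref{thm:CYtriple}; all of the geometric and homological-algebraic content is carried by the main theorem.
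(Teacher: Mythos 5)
Your overall route is the paper's route: Theorem \ref{thm:CYtriple} plus the Iyama--Yang machinery (Theorem \ref{IY-thm}) for the quotient $D^{\pi}\mathcal{W}(X_Q^{d+1})/D^{\pi}\mathcal{F}(X_Q^{d+1})$, then transport through the Ganatra--Gao--Venkatesh comparison. But two points you gloss over are exactly where the paper does real work, and one of them is a genuine gap in your argument. First, the hypothesis of Theorem \ref{thm:GGV} is not that $X_Q^{d+1}$ lies in some ``class of Liouville domains''; it is that the wrapped Fukaya category admits a Koszul dual subcategory sitting inside the compact Fukaya category. This is a concrete condition which the paper verifies by citing the Koszul duality results for plumbings (the Ekholm--Lekili work), so calling it an ``external obstacle'' to be granted mischaracterizes the structure of the proof: it is checked, not assumed.

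Second, and more substantively, the GGV theorem identifies the Rabinowitz Fukaya category with the quotient $\mathcal{W}/\mathcal{F}$ formed at the level of $A_\infty$-categories, whereas the cluster-category statement of Theorem \ref{IY-thm} concerns the Verdier quotient of triangulated categories $D^{\pi}\mathcal{W}/D^{\pi}\mathcal{F}$. Your step ``transport the structure through the GGV quasi-equivalence'' silently assumes these two quotients agree, i.e.\ that $D^{\pi}\mathcal{W}/D^{\pi}\mathcal{F} \simeq D^{\pi}(\mathcal{W}/\mathcal{F})$. This is not automatic: Drinfeld's localization theorem (\cite[Theorem 3.4]{dri04}) requires, roughly, a generation statement, and the paper supplies it by observing that the silting object $L_Q$ induces a bounded co-t-structure on $D^{\pi}\mathcal{W}$ with co-heart $\mathrm{add}(L_Q)$ (\cite[Proposition 2.8]{iya-yan18}), hence $L_Q$ is an honest generator, which then allows the identification of the Verdier quotient with the derived category of the $A_\infty$-quotient. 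Without this bridge your ``triangulated equivalence preserves cluster structures'' step has nothing to apply to, because the equivalence you invoke has not been established between the two categories you actually need to compare. Once these two points are added, the rest of your argument (Iyama--Yang gives the $d$-Calabi--Yau structure and the cluster-tilting object, and an exact equivalence transports both) matches the paper.
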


For a geometric interpretation of the above result, we remark that the Calabi--Yau structure on $D^{\pi} \mathcal{R}\mathcal{W}(X_Q^{d+1})$ can be understood as a categorical extension of the Poincar\'{e} duality \cite{cho20}. See \cite{BJK23} for details.
We concentrate only on the Calabi--Yau structures on the cluster categories in this paper. However, for cluster mutations, we expect that the mutations are related to Lagrangian surgeries and leave it for future research.

%
%

Using the Calabi--Yau structure, we compute the morphism spaces in $D^{\pi}\mathcal{W}(X_{Q}^{3})/D^{\pi}\mathcal{F}(X_{Q}^{3})$ between the cocore disks $L_{i}$ when the quiver is of Dynkin type and $d=2$. One can regard the following result as the computation of Rabinowitz Floer cohomology (or v-shaped Lagrangian Floer cohomology) between the cocores. Then it becomes easy to compute the pairing for the silting object $L_{Q}$ given as the direct sum of all the cocore disks.

\begin{thm}
Given a Dynkin quiver $Q$, one can construct a quiver $\overline{\Omega}_{Q}$ (see Definition \ref{dfn:newomega}) and an ideal $J$ in $\mathbb{K}\overline{\Omega}_{Q}$. Then, there is a graded ring isomorphism
$$\phi : \mathbb{K}\overline{\Omega}_{Q}/\overline{J} \xrightarrow{\hspace{0.3em} \simeq \hspace{0.3em}} \mathrm{Hom}^{*}_{D^{\pi}\mathcal{W}(X_{Q}^{3})/D^{\pi}\mathcal{F}(X_{Q}^{3})}(L_{Q},L_{Q})$$
(which can be generalized for any $d \geq 2$).
\end{thm}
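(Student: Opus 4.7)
My plan is to compute the endomorphism ring of $L_Q$ in the quotient category in two steps: first describe the wrapped endomorphism algebra $A := \Hom^*_{D^\pi\W(X_Q^3)}(L_Q, L_Q)$ explicitly, then use the Calabi--Yau triple structure of Theorem \ref{thm:CYtriple} to lift this computation to the quotient, and finally match the outcome with $\K\overline{\Omega}_Q / \overline{J}$. By the Etg\"u--Lekili / Ekholm--Lekili description of plumbing Floer cohomology, $A$ is quasi-isomorphic to the $(d{+}1)$-Ginzburg dg algebra of $Q$. For $Q$ Dynkin and $d \geq 2$, this algebra is formal, concentrated in non-positive degrees and finite-dimensional, with $H^0(A) = \Pi(Q)$ the preprojective algebra; I would record a presentation $A \cong \K\Omega_Q / J_0$, where $\Omega_Q$ is a double of $Q$ and $J_0$ gathers the preprojective-type relations together with the negative-degree Ginzburg relations.

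Since the triple is $(d{+}1)$-Calabi--Yau, the quotient is $d$-Calabi--Yau, and by standard cluster-category results (Iyama--Yang, Amiot, Keller) the morphism space $\Hom^*_{D^\pi\W(X_Q^3) / D^\pi\F(X_Q^3)}(L_Q, L_Q)$ is a trivial extension
\[
A \,\ltimes\, DA[-d],
\]
whose $A$-bimodule structure on $DA$ is induced by Floer composition together with the Calabi--Yau pairing, and in which the product of any two elements from the $DA[-d]$ summand vanishes. The quiver $\overline{\Omega}_Q$ of Definition \ref{dfn:newomega} should then be $\Omega_Q$ enlarged by one new arrow in degree $d$ for each minimal $A$-bimodule generator of $DA$, and $\overline{J}$ should enlarge $J_0$ by the relations that enforce this bimodule action together with the vanishing of products of two new arrows. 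I would define $\phi$ on generators---idempotents $e_i$ to $\mathrm{id}_{L_i}$, arrows of $\Omega_Q$ to their canonical Floer representatives in $A$, and the new arrows to dual basis elements of $DA$ under a chosen trace---and verify that $\phi$ annihilates $\overline{J}$; surjectivity is immediate from the presentation, and injectivity follows by a degree-wise dimension count using the finiteness of $\dim_\K \Pi(Q)$ for Dynkin $Q$.

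The hard step is matching the geometric Calabi--Yau pairing on Fukaya cohomology---whose Poincar\'e-duality origin is outlined in \cite{cho20, BJK23}---with the algebraic bimodule structure baked into $\overline{J}$. In Dynkin types for which the Nakayama permutation on $\Pi(Q)$ is nontrivial (namely $A_n$, $D_{2k+1}$, $E_6$), this twist has to appear in the source/target data of the new arrows of $\overline{\Omega}_Q$, and one must check that the geometric duality introduces the same twist with the correct signs. A secondary concern is that the higher $A_\infty$-operations on the Fukaya side could in principle deform the ring structure away from a pure trivial extension; however, since $A$ is concentrated in a narrow range of degrees for Dynkin $Q$ and $d = 2$, I expect these higher operations to contribute trivially on cohomology, so that only the classical cup product survives.
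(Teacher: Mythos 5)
Your first step and your degree-wise bookkeeping are consistent with the paper: by Corollary \ref{cor:hom} the quotient morphism space is $A=\mathrm{Hom}^*_{D^\pi\W}(L_Q,L_Q)$ in degrees $\leq 0$ and $D(A^{d-p})$ in degrees $p\geq d$, and $A$ itself is computed from the Ginzburg dg algebra via Hermes's minimal model (Theorem \ref{thm:H}). But the structural ansatz you build on --- that the full graded ring is the trivial extension $A\ltimes DA[-d]$ with square-zero dual part --- is false, and this is exactly the point the whole computation turns on. Each arrow $v_{i,\phi(i)}$ has mapping cone in $D^\pi\F$ (Lemma \ref{lem:cone}), so in the quotient it becomes an \emph{isomorphism} $L_i\to L_{\phi(i)}[\,|v_{i,\phi(i)}|\,]$; its inverse is a positive-degree element whose self-compositions are again isomorphisms (hence nonzero products of two ``dual-part'' elements), and $v_{i,\phi(i)}^{-1}v_{i,\phi(i)}=e_i$ lands in degree $0$, which is impossible in a square-zero extension. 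This is visible in the paper's answer: Definition \ref{dfn:newomega} imposes invertibility relations $v^{-1}v-e_i$, $vv^{-1}-e_{\phi(i)}$, not square-zero relations, and e.g.\ for $E_6$ one gets $\mathrm{Hom}^*_{D^\pi\W/D^\pi\F}(L_2,L_2)\cong\K[x_2^{\pm1}]$, which is not of trivial-extension form. The trivial-extension statements you cite (Amiot, relation-extension/cluster-tilted algebras) concern only the degree-zero endomorphism algebra, not the full graded ring, so your proposed presentation of $\overline{\Omega}_Q$ (new arrows all in degree $d$, products of new arrows set to zero) describes a different algebra from $\K\overline{\Omega}_Q/\overline{J}$ and the matching step at the end cannot go through.

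There are also factual slips in your description of $A$ that feed this error: $H^*(\Gamma_Q)$ is \emph{not} finite dimensional (Proposition \ref{prop:ieqj} exhibits polynomial generators $y_i$ of negative degree), its degree-zero part is $\K Q$ rather than the preprojective algebra (the mesh relations $\sum(\alpha^*\alpha-\beta\beta^*)$ sit in degree $-1$), the new arrows must have the varying degrees $N(i)+1$ dictated by the Auslander--Reiten combinatorics rather than all degree $d$, and formality of $\Gamma_Q$ is neither needed nor available --- only the graded ring structure of the cohomology enters. The paper's actual argument replaces your trivial-extension step as follows: define $\phi$ on the new arrows by the roof $\mathrm{id}\circ v_{i,\phi(i)}^{-1}$ in the Verdier quotient, check the invertibility relations are respected, and then prove bijectivity by putting every path into the canonical form of Section 5 and establishing the dimension symmetry $\dim(\K\overline{\Omega}_Q/\overline{J})_p=\dim(\K\overline{\Omega}_Q/\overline{J})_{2-p}$ together with injectivity in positive degrees (Theorem \ref{thm:main}); your worry about higher $A_\infty$-operations is moot, since the statement concerns composition in the triangulated quotient, but the genuine work is this path-algebra analysis, not a Nakayama-twist bookkeeping for a trivial extension.
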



\subsection{Notations and conventions}

In this paper, $\mathbb{K}$ denotes an algebraically closed field of characteristic zero.

All triangulated categories and exact functors are assumed to be linear over $\mathbb{K}$.
The space of morphisms from two objects $X,Y$ of a triangulated category $\mathcal{C}$ will be denoted by $\mathrm{Hom}_\mathcal{C}(X,Y)$.
We also use the notation
\begin{equation*}
\mathrm{Hom}^*_\mathcal{C}(X,Y) \coloneqq \bigoplus_{p \in \mathbb{Z}} \mathrm{Hom}_\mathcal{C}(X,Y[p])[-p].
\end{equation*}

Similarly, we assume that all $A_\infty$-categories and $A_\infty$-functors are linear over $\mathbb{K}$ and cohomologically unital in the sense of \cite[Section (2a)]{sei08}.
The space of morphisms from two objects $X,Y$ of an $A_\infty$-category $\mathcal{A}$ will be denoted by $\mathrm{hom}_\mathcal{A}(X,Y)$.

\begin{ack}
We thank Yuan Gao for helpful discussion about their new results.
Hanwool Bae was supported by the National Research Foundation of Korea (NRF) grant funded by the Korea government (MSIT) (No.2020R1A5A1016126), Wonbo Jeong was supported by Samsung Science and Technology Foundation under project number SSTF-BA1402-52, and Jongmyeong Kim was supported by the Institute for Basic Science (IBS-R003-D1).
\end{ack}

\section{$A_\infty$-categories and $A_\infty$-modules}

\subsection{$A_\infty$-categories}

In this section, we review basic properties of $A_\infty$-modules and introduce some notations.
For the precise definitions, see \cite[Section 2]{gan13}.

An {\em $A_\infty$-category} $\mathcal{C}$ consists of a set of objects $\mathrm{Ob}(\mathcal{C})$, a graded vector space $\mathrm{hom}_\mathcal{C}(X,Y)$ for every $X,Y \in \mathrm{Ob}(\mathcal{C})$, and graded linear maps
\begin{equation*}
\mu_\mathcal{C}^k : \mathrm{hom}_\mathcal{C}(X_{k-1},X_k) \otimes \cdots \otimes \mathrm{hom}_\mathcal{C}(X_0,X_1) \to \mathrm{hom}_\mathcal{C}(X_0,X_k)[2-k]
\end{equation*}
for all $k \geq 1$ and $X_0,\dots,X_k \in \mathrm{Ob}(\mathcal{C})$ satisfying the $A_\infty$-relations
\begin{equation*}
\sum_{n,m} (-1)^{*_n} \mu_\mathcal{C}^{k-m+1}(a_k,\dots,a_{n+m+1},\mu_\mathcal{C}^m(a_{n+m},\dots,a_{n+1}),a_n,\dots,a_1) = 0
\end{equation*}
where $*_n = \deg a_{1} + \dots + \deg a_{n} - n$.
An $A_\infty$-category $\mathcal{C}$ is called {\em cohomologically unital} if the homology category $H^*(\mathcal{C})$ has identity morphisms for all objects (i.e., $H^*(\mathcal{C})$ is a category in the ordinary sense).

An {\em $A_\infty$-functor} $F : \mathcal{C} \to \mathcal{D}$ between $A_\infty$-categories $\mathcal{C},\mathcal{D}$ consists of a map $F : \mathrm{Ob}(\mathcal{C}) \to \mathrm{Ob}(\mathcal{D})$, and graded linear maps
\begin{equation*}
F^k : \mathrm{hom}_\mathcal{C}(X_{k-1},X_k) \otimes \cdots \otimes \mathrm{hom}_\mathcal{C}(X_0,X_1) \to \mathrm{hom}_\mathcal{D}(FX_0,FX_k)[1-k]
\end{equation*}
for any $k \geq 1$ and $X_0,\dots,X_k \in \mathrm{Ob}(\mathcal{C})$ satisfying the $A_\infty$-relations
\begin{align*}
&\sum_{n,m} (-1)^{*_n} F^{k-m+1}(a_k,\dots,a_{n+m+1},\mu_\mathcal{C}^m(a_{n+m},\dots,a_{n+1}),a_n,\dots,a_1)\\
&= \sum_r \sum_{s_1,\dots,s_r} \mu_\mathcal{D}^r(F^{s_r}(a_k,\dots,a_{k-s_r+1}),\dots,F^{s_1}(a_{s_1},\dots,s_1)).
\end{align*}
An $A_\infty$-functor $F : \mathcal{C} \to \mathcal{D}$ between cohomologically unital $A_\infty$-categories $\mathcal{C},\mathcal{D}$ is called {\em cohomologically unital} if the induced functor $H^*(F) : H^*(\mathcal{C}) \to H^*(\mathcal{D})$ sends identity morphisms to identity morphisms (i.e., $H^*(F)$ is a functor in the ordinary sense).

In this paper, we only consider cohomologically unital $A_\infty$-categories and $A_\infty$-functors.

\subsection{$A_\infty$-modules}

Let $\mathcal{A}$ be an $A_\infty$-category.
Recall that the dg category $\mathrm{Mod}_\mathcal{A}$ of right {\em $\mathcal{A}$-modules} is defined to be the dg category $\mathrm{Fun}_{A_\infty}(\mathcal{A}^\mathrm{op},\mathrm{Mod}_\mathbb{K})$ of $A_\infty$-functors from $\mathcal{A}^\mathrm{op}$ to the dg category $\mathrm{Mod}_\mathbb{K}$ of dg $\mathbb{K}$-modules (see \cite[Section 2.4]{gan13}).
Similarly, the dg category $_\mathcal{A}\mathrm{Mod}$ of left $\mathcal{A}$-modules is defined to be $\mathrm{Fun}_{A_\infty}(\mathcal{A},\mathrm{Mod}_\mathbb{K})$.
For two $A_\infty$-categories $\mathcal{A},\mathcal{B}$, we denote by $_\mathcal{A}\mathrm{Mod}_\mathcal{B}$ the dg category of $(\mathcal{A},\mathcal{B})$-bimodules.
If $\mathcal{A}$ (resp. $\mathcal{B}$) is $\mathbb{K}$ then $_\mathcal{A}\mathrm{Mod}_\mathcal{B}$ can naturally be identified with $\mathrm{Mod}_\mathcal{B}$ (resp. $_\mathcal{A}\mathrm{Mod}$).
For convenience, we often denote $\mathrm{hom}_{\mathrm{Mod}_\mathcal{A}}$, $\mathrm{hom}_{_\mathcal{A}\mathrm{Mod}}$ and $\mathrm{hom}_{_\mathcal{A}\mathrm{Mod}_\mathcal{B}}$ by $\mathrm{hom}_\mathcal{A}$, $_\mathcal{A}\mathrm{hom}$ and $_\mathcal{A}\mathrm{hom}_\mathcal{B}$ respectively.

For $A_\infty$-categories $\mathcal{A},\mathcal{B},\mathcal{C}$ and $\mathcal{M} \in {_\mathcal{A}}\mathrm{Mod}_\mathcal{B}$, there is a dg functor
\begin{equation*}
- \otimes_\mathcal{A} \mathcal{M} : {_\mathcal{C}}\mathrm{Mod}_\mathcal{A} \to {_\mathcal{C}}\mathrm{Mod}_\mathcal{B}
\end{equation*}
which generalizes the tensor product of dg modules (see \cite[Section 2.5]{gan13}).
Note that if $\mathcal{M} \in {_\mathcal{A}}\mathrm{Mod}$ and $\mathcal{N} \in \mathrm{Mod}_\mathcal{B}$ then, regarding them as $(\mathcal{A},\mathbb{K})$- and $(\mathbb{K},\mathcal{B})$-bimodules respectively, we obtain $\mathcal{M} \otimes_\mathbb{K} \mathcal{N} \in {_\mathcal{A}}\mathrm{Mod}_\mathcal{B}$.
There is also a notion of the bimodule tensor product
\begin{equation*}
- \otimes_{(\mathcal{A},\mathcal{B})} \mathcal{M} : {_\mathcal{B}}\mathrm{Mod}_\mathcal{A} \to \mathrm{Mod}_\mathbb{K}.
\end{equation*}

It is not difficult to check the following hom-tensor adjunction for $A_\infty$-modules (see \cite[Proposition 2.10]{gan13} in which it is proved when $\mathcal{A}=\mathcal{C}$ and $\mathcal{B}=\mathbb{K}$).

\begin{prop}\label{hom-ten}
Let $\mathcal{M} \in {_\mathcal{A}}\mathrm{Mod}_\mathcal{B}$, $\mathcal{N} \in {_\mathcal{B}}\mathrm{Mod}_\mathcal{C}$ and $\mathcal{L} \in {_\mathcal{A}}\mathrm{Mod}_\mathcal{C}$.
Then we have natural quasi-isomorphisms
\begin{gather*}
{_\mathcal{A}}\mathrm{hom}_\mathcal{C}(\mathcal{M} \otimes_\mathcal{B} \mathcal{N},\mathcal{L}) \simeq {_\mathcal{A}}\mathrm{hom}_\mathcal{B}(\mathcal{M},\mathrm{hom}_\mathcal{C}(\mathcal{N},\mathcal{L})),\\
{_\mathcal{A}}\mathrm{hom}_\mathcal{C}(\mathcal{M} \otimes_\mathcal{B} \mathcal{N},\mathcal{L}) \simeq {_\mathcal{B}}\mathrm{hom}_\mathcal{C}(\mathcal{N},{_\mathcal{A}}\mathrm{hom}(\mathcal{M},\mathcal{L})).
\end{gather*}
\end{prop}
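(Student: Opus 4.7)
The plan is to follow and extend the argument of \cite[Proposition 2.10]{gan13}, where this adjunction is established in the special case $\mathcal{A}=\mathcal{C}$ and $\mathcal{B}=\mathbb{K}$. In fact, on the explicit bar models used there, one obtains chain-level isomorphisms rather than only quasi-isomorphisms, which is what makes the argument essentially formal.

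First I would unpack the bar models. An element of $(\mathcal{M} \otimes_\mathcal{B} \mathcal{N})(A,C)$ is a sum of tensors $m \otimes b_k \otimes \cdots \otimes b_1 \otimes n$ with $m \in \mathcal{M}$, each $b_i \in \mathcal{B}$, and $n \in \mathcal{N}$, equipped with the standard bar differential that encodes $\mu_\mathcal{B}$ together with the two module actions on $\mathcal{M}$ and $\mathcal{N}$. A pre-morphism $f \in {_\mathcal{A}}\mathrm{hom}_\mathcal{C}(\mathcal{M} \otimes_\mathcal{B} \mathcal{N}, \mathcal{L})$ is then a collection of multilinear maps whose inputs are sequences $(\vec{a}, m, \vec{b}, n, \vec{c})$ with $\vec{a} \in \mathcal{A}^{\otimes p}$ and $\vec{c} \in \mathcal{C}^{\otimes q}$. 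Unpacking ${_\mathcal{A}}\mathrm{hom}_\mathcal{B}(\mathcal{M}, \mathrm{hom}_\mathcal{C}(\mathcal{N},\mathcal{L}))$ by applying the definition of bimodule hom twice yields exactly the same multilinear data, and likewise for ${_\mathcal{B}}\mathrm{hom}_\mathcal{C}(\mathcal{N}, {_\mathcal{A}}\mathrm{hom}(\mathcal{M},\mathcal{L}))$.

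Second, I would define the adjunction maps by the tautological formulas
$$\Phi(f)(\vec{a},m,\vec{b})(n,\vec{c}) = f(\vec{a},m,\vec{b},n,\vec{c}), \qquad \Psi(f)(n,\vec{c})(\vec{a},m,\vec{b}) = (-1)^{\dagger} f(\vec{a},m,\vec{b},n,\vec{c}),$$
where $\dagger$ is the appropriate Koszul sign. Each is a bijection on underlying graded vector spaces by the input-counting above. The remaining content is to check that $\Phi$ and $\Psi$ intertwine the differentials: on both sides the differential decomposes as a sum of terms obtained by applying $\mu_\mathcal{A}$, $\mu_\mathcal{B}$, $\mu_\mathcal{C}$, the bimodule structure maps of $\mathcal{M}$, $\mathcal{N}$, $\mathcal{L}$, and the bar differential of the tensor product, and these terms match one-to-one under the formulas above. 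The single-sided version in \cite{gan13} already fixes the signs for the $\mathcal{B}$- and $\mathcal{C}$-inputs, while the additional $\mathcal{A}$-inputs appear only as outer decorations and are handled by the same Koszul rule, so no new analysis is required. Naturality in $\mathcal{M}, \mathcal{N}, \mathcal{L}$ is manifest from the explicit formulas.

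The only genuine obstacle is sign bookkeeping, in particular pinning down the Koszul exponent $\dagger$ so that $\Psi$ actually commutes with the differentials. There is no conceptual difficulty: since $\Phi$ and $\Psi$ are isomorphisms of dg vector spaces on the nose, no spectral-sequence or bar-resolution argument is needed to upgrade them from quasi-isomorphisms, and the natural transformations claimed in the proposition follow directly.
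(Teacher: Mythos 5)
Your proposal is correct and is exactly the argument the paper has in mind: the paper gives no proof of Proposition \ref{hom-ten} beyond pointing to \cite[Proposition 2.10]{gan13}, and your currying of the bar-model inputs $(\vec{a},m,\vec{b},n,\vec{c})$, yielding chain-level isomorphisms with only Koszul signs to track, is the standard way that special case extends to general $\mathcal{A},\mathcal{B},\mathcal{C}$. Nothing further is needed.
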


There is a cohomologically fully faithful $A_\infty$-functor $\mathcal{Y}^r : \mathcal{A} \to \mathrm{Mod}_\mathcal{A}$, called the {\em right Yoneda embedding}, which sends $X \in \mathcal{A}$ to an $A_\infty$-functor $\mathcal{Y}^r_X : \mathcal{A}^\mathrm{op} \to \mathrm{Mod}_\mathbb{K}$ acting on objects by $\mathcal{Y}^r_X(Y) = \mathrm{hom}_\mathcal{A}(Y,X)$ (see \cite[Section 2.7]{gan13}).
Similarly, we also have the {\em left Yoneda embedding} $\mathcal{Y}^l : \mathcal{A}^\mathrm{op} \to {_\mathcal{A}\mathrm{Mod}}$.
A right (resp. left) $\mathcal{A}$-module of the form $\mathcal{Y}^r_X$ (resp. $\mathcal{Y}^l_X$) is called a {\em right (resp. left) Yoneda $\mathcal{A}$-module} and an $(\mathcal{A},\mathcal{B})$-bimodule of the form $\mathcal{Y}^l_X \otimes_\mathbb{K} \mathcal{Y}^r_Y$, for some $X \in \mathcal{A}$ and $Y \in \mathcal{B}$, is called a {\em Yoneda $(\mathcal{A},\mathcal{B})$-bimodule}.

The dg category $\mathrm{Mod}_\mathcal{A}$ has a natural structure of a pretriangulated dg category.
Let $\mathcal{X} \subset \mathcal{A}$ be a full subcategory and $\mathcal{M} \in \mathrm{Mod}_\mathcal{A}$.
We say that $\mathcal{M}$ is {\em split-generated} by $\mathcal{X}$ (or $\mathcal{X}$ {\em split-generates} $\mathcal{M}$) if $\mathcal{M}$ is contained in the smallest split-closed pretriangulated subcategory containing $\{ \mathcal{Y}^r_X \,|\, X \in \mathcal{X} \}$.
In this case, we also say that $\mathcal{M}$ is split-generated by $\{ \mathcal{Y}^r_X \,|\, X \in \mathcal{X} \}$.
Similar definitions can be made for the cases of $_\mathcal{A}\mathrm{Mod}$ and $_\mathcal{A}\mathrm{Mod}_\mathcal{B}$.

A right $\mathcal{A}$-module which can be split-generated by right Yoneda $\mathcal{A}$-modules is said to be {\em perfect}.
We denote by $\mathrm{Perf}_\mathcal{A}$ the full subcategory of $\mathrm{Mod}_\mathcal{A}$ consisting of perfect right $\mathcal{A}$-modules.
One can similarly define $_\mathcal{A}\mathrm{Perf}$ and $_\mathcal{A}\mathrm{Perf}_\mathcal{B}$.

We also have a full subcategory of $\mathrm{Mod}_\mathcal{A} (= \mathrm{Fun}_{A_\infty}(\mathcal{A}^\mathrm{op},\mathrm{Mod}_\mathbb{K}))$ consisting of $A_\infty$-functors from $\mathcal{A}^\mathrm{op}$ to $\mathrm{Mod}_\mathbb{K}$ whose images lie in the dg category $\mathrm{Perf}_\mathbb{K}$ of perfect dg $\mathbb{K}$-modules, i.e., cochain complexes of $\mathbb{K}$-vector spaces with finite dimensional total cohomology.
A right $\mathcal{A}$-module in this subcategory is said to be {\em proper}.
We denote by $\mathrm{Prop}_\mathcal{A}$ the full subcategory of $\mathrm{Mod}_\mathcal{A}$ consisting of proper right $\mathcal{A}$-modules.
One can similarly define $_\mathcal{A}\mathrm{Prop}$ and $_\mathcal{A}\mathrm{Prop}_\mathcal{B}$.

The derived category $D\mathcal{A}$ of $\mathcal{A}$ is defined as the localization of $H^0(\mathrm{Mod}_\mathcal{A})$ with respect to $A_\infty$-quasi-isomorphisms.
It admits a natural structure of a triangulated category.
Moreover it has two distinguished thick subcategories: one is $D^\pi\mathcal{A}$ whose objects are perfect $\mathcal{A}$-modules and the other is $D_c\mathcal{A}$ whose objects are proper $\mathcal{A}$-modules.

\begin{rmk}
This notation is not standard.
In many literatures, especially when $\mathcal{A}$ is just a dg algebra (i.e., a dg category with a single object), $D^\pi\mathcal{A}$ and $D_c\mathcal{A}$ are usually denoted by $\mathrm{per}\,\mathcal{A}$ and $D_{fd}\mathcal{A}$ respectively.
\end{rmk}

\begin{rmk}
The main categories of study in this paper are $D^\pi\mathcal{W}(M)$ and $D_c\mathcal{W}(M)$ where $\mathcal{W}(M)$ is the wrapped Fukaya category of a Liouville manifold $M$.
It is natural to expect that $D_c\mathcal{W}(M)$ coincides with the derived compact Fukaya category $D^\pi\mathcal{F}(M)$ and, in fact, we will see this is the case under some assumptions (see Proposition \ref{prop:cytriple}).
However note that this is not always true as can be seen from the case $M=T^*S^1$ in which case $D^\pi\mathcal{F}(M)$ is strictly smaller than $D_c\mathcal{W}(M)$.
We also note that a similar problem was studied in \cite{gan21}.
\end{rmk}

\subsection{Smooth $A_\infty$-categories}

Recall that, for an $A_\infty$-category $\mathcal{A}$, the {\em diagonal bimodule} $\mathcal{A}_\Delta \in {_\mathcal{A}}\mathrm{Mod}_\mathcal{A}$ is defined by $\mathcal{A}_\Delta(X,Y) = \mathrm{hom}_\mathcal{A}(Y,X)$ equipped with an obvious $A_\infty$-structure.
Note that the dg functor $- \otimes_\mathcal{A} \mathcal{A}_\Delta : {_\mathcal{B}}\mathrm{Mod} \to {_\mathcal{B}}\mathrm{Mod}$ is naturally quasi-isomorphic to the identity functor.

\begin{dfn}[{\cite[Definition 8.2]{kon-soi09}}]
An $A_\infty$-category $\mathcal{A}$ is called {\em smooth} if the diagonal bimodule $\mathcal{A}_\Delta$ is perfect.
\end{dfn}

\begin{rmk}
It is known that the dg category of perfect complexes on a separated scheme $X$ of finite type is smooth if and only if $X$ is smooth (see \cite[Proposition 3.13]{lun10}, \cite[Proposition 3.31]{orl16}).
\end{rmk}

For $\mathcal{M} \in \mathrm{Mod}_\mathcal{A}$ (resp. $\mathcal{M} \in {_\mathcal{A}}\mathrm{Mod}$), we define its {\em linear dual} by $D\mathcal{M} \coloneqq {_\mathbb{K}}\mathrm{hom}(\mathcal{M},\mathbb{K}) \in {_\mathcal{A}}\mathrm{Mod}$ (resp. $D\mathcal{M} \coloneqq \mathrm{hom}_\mathbb{K}(\mathcal{M},\mathbb{K}) \in \mathrm{Mod}_\mathcal{A}$).
Also, for $\mathcal{M} \in {_\mathcal{A}}\mathrm{Mod}_\mathcal{A}$, one can define the {\em bimodule dual} $\mathcal{M}^! \in {_\mathcal{A}}\mathrm{Mod}_\mathcal{A}$ which acts on objects as $\mathcal{M}^!(X,Y) = {_\mathcal{A}}\mathrm{hom}_\mathcal{A}(\mathcal{M},\mathcal{Y}^l_X \otimes_\mathbb{K} \mathcal{Y}^r_Y)$.
See \cite[Definition 2.40]{gan13} for the precise definition.

The linear dual shares many properties with the linear dual for vector spaces.
For instance, one can easily check the following lemmas.

\begin{lem}\label{dual-dual}
For $\mathcal{M} \in \mathrm{Prop}_\mathcal{A}$, we have a natural quasi-isomorphism
\begin{equation*}
DD\mathcal{M} \simeq \mathcal{M}.
\end{equation*}
\end{lem}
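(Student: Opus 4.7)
The plan is to produce the canonical evaluation morphism $\eta_\mathcal{M} : \mathcal{M} \to DD\mathcal{M}$ and to show that, under the properness hypothesis, it is a quasi-isomorphism of $A_\infty$-modules, natural in $\mathcal{M}$. I will write the argument for $\mathcal{M} \in \mathrm{Mod}_\mathcal{A}$; the case of a left module is symmetric.

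First, I would construct $\eta_\mathcal{M}$ pointwise. For each $X \in \mathcal{A}$ the underlying dg $\mathbb{K}$-module $\mathcal{M}(X)$ carries the usual double-duality map
\begin{equation*}
\eta_X : \mathcal{M}(X) \to \mathrm{hom}_\mathbb{K}(\mathrm{hom}_\mathbb{K}(\mathcal{M}(X),\mathbb{K}),\mathbb{K}) = (DD\mathcal{M})(X),\qquad v \mapsto \mathrm{ev}_v
\end{equation*}
(with the standard Koszul sign). I would then assemble the family $\{\eta_X\}_{X \in \mathcal{A}}$ into a pre-morphism of right $\mathcal{A}$-modules whose higher components all vanish. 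Because the linear dual $\mathrm{hom}_\mathbb{K}(-,\mathbb{K})$ is a strict contravariant dg functor on $\mathrm{Mod}_\mathbb{K}$, the structure maps of $DD\mathcal{M}$ are just the double duals of those of $\mathcal{M}$, so the equations defining a closed morphism in $\mathrm{Mod}_\mathcal{A}$ collapse to the strict naturality of double-dualization on dg vector spaces. This produces a closed degree-zero element $\eta_\mathcal{M} \in \mathrm{hom}_\mathcal{A}(\mathcal{M},DD\mathcal{M})$, functorial in $\mathcal{M}$ by construction.

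Second, I would verify the quasi-isomorphism objectwise. Since $\mathcal{M}$ is proper, $\mathcal{M}(X) \in \mathrm{Perf}_\mathbb{K}$ for every $X$, meaning that $\mathcal{M}(X)$ has finite-dimensional total cohomology. Any such complex is quasi-isomorphic to a bounded complex of finite-dimensional $\mathbb{K}$-vector spaces, on which the double-duality map is a strict isomorphism in every degree; hence $\eta_X$ is a quasi-isomorphism of dg $\mathbb{K}$-modules for every $X \in \mathcal{A}$. To finish, I would invoke the standard fact that a closed degree-zero morphism of $A_\infty$-modules is a quasi-isomorphism in the dg category $\mathrm{Mod}_\mathcal{A}$ precisely when its linear component is a quasi-isomorphism at every object.

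The argument is conceptually routine: the only bookkeeping step is checking that the higher components of $\eta_\mathcal{M}$ can be taken to vanish, after which everything reduces to strict naturality of double-dualization on $\mathrm{Mod}_\mathbb{K}$ together with classical linear algebra for finite-dimensional complexes. I do not expect a genuine obstacle; the statement is really a module-level incarnation of the fact that $V \cong V^{**}$ for finite-dimensional $V$.
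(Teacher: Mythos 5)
Your argument is correct and is precisely the routine check the paper leaves to the reader (the lemma is stated without proof as an easy verification): the strict evaluation morphism is closed because linear dualization is a strict dg functor, and properness makes it an objectwise quasi-isomorphism, which is exactly what a quasi-isomorphism in $\mathrm{Mod}_\mathcal{A}$ means. For the objectwise step, the cleanest justification over the field $\mathbb{K}$ is to split $\mathcal{M}(X)$ as its cohomology plus a contractible summand; this simultaneously shows that $D$ (hence $DD$) preserves quasi-isomorphisms, which your reduction to a bounded finite-dimensional model implicitly uses.
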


\begin{lem}\label{lin-dual}
For $\mathcal{M} \in \mathrm{Prop}_\mathcal{A}$ and $\mathcal{N} \in \mathrm{Mod}_\mathcal{B}$, we have a natural quasi-isomorphism
\begin{equation*}
D\mathcal{M} \otimes_\mathbb{K} \mathcal{N} \simeq {_\mathbb{K}}\mathrm{hom}(\mathcal{M},\mathcal{N}).
\end{equation*}
\end{lem}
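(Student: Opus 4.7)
The plan is to construct the evident natural evaluation-tensor morphism of $(\mathcal{A},\mathcal{B})$-bimodules and reduce the quasi-isomorphism claim to the classical linear-algebra fact that $V^*\otimes_\mathbb{K} W \to \mathrm{hom}_\mathbb{K}(V,W)$ is a quasi-isomorphism whenever $V$ is a perfect dg $\mathbb{K}$-module. Both $D\mathcal{M}\otimes_\mathbb{K}\mathcal{N}$ and ${_\mathbb{K}}\mathrm{hom}(\mathcal{M},\mathcal{N})$ carry canonical $(\mathcal{A},\mathcal{B})$-bimodule structures (left $\mathcal{A}$-action dualized from the right action on $\mathcal{M}$; right $\mathcal{B}$-action inherited from $\mathcal{N}$), and the comparison morphism is natural in each of $\mathcal{M}$ and $\mathcal{N}$, so the target is a genuine quasi-isomorphism in ${_\mathcal{A}}\mathrm{Mod}_\mathcal{B}$.

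First I would write down the bimodule map $\phi: D\mathcal{M}\otimes_\mathbb{K}\mathcal{N} \to {_\mathbb{K}}\mathrm{hom}(\mathcal{M},\mathcal{N})$ by specifying its structure maps. The only nontrivial component is the $(0\,|\,0)$-component, which at a pair of objects $(X,Y)\in\mathcal{A}\times\mathcal{B}$ is the classical evaluation-tensor map
\begin{equation*}
\mathrm{hom}_\mathbb{K}(\mathcal{M}(X),\mathbb{K})\otimes_\mathbb{K}\mathcal{N}(Y) \longrightarrow \mathrm{hom}_\mathbb{K}(\mathcal{M}(X),\mathcal{N}(Y)), \qquad f\otimes n \longmapsto \bigl(m\mapsto (-1)^{*}f(m)\,n\bigr),
\end{equation*}
with Koszul signs dictated by the sign conventions of \cite{gan13}. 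Compatibility with the $A_\infty$-bimodule structure maps on either side is a direct verification: the left $\mathcal{A}$-action on $D\mathcal{M}\otimes_\mathbb{K}\mathcal{N}$ and on ${_\mathbb{K}}\mathrm{hom}(\mathcal{M},\mathcal{N})$ both come from precomposition with the right $\mathcal{A}$-action on $\mathcal{M}$, and the right $\mathcal{B}$-actions both come from the right $\mathcal{B}$-action on $\mathcal{N}$. No higher components of $\phi$ are needed, and the bimodule $A_\infty$-relations reduce to the compatibility of $\mu_\mathcal{M}$ and $\mu_\mathcal{N}$ with their duals and tensor products, which is formal.

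Having $\phi$ in hand, I would verify the quasi-isomorphism objectwise. For fixed $(X,Y)$ the map $\phi(X,Y)$ is exactly the classical evaluation-tensor map of dg $\mathbb{K}$-modules, and $\mathcal{M}\in\mathrm{Prop}_\mathcal{A}$ guarantees that $\mathcal{M}(X)\in\mathrm{Perf}_\mathbb{K}$, i.e., has finite dimensional total cohomology. Both functors $V\mapsto DV\otimes_\mathbb{K}\mathcal{N}(Y)$ and $V\mapsto \mathrm{hom}_\mathbb{K}(V,\mathcal{N}(Y))$ send quasi-isomorphisms of dg $\mathbb{K}$-modules to quasi-isomorphisms (since $\mathbb{K}$ is a field, every dg $\mathbb{K}$-module is $K$-projective and $K$-injective), so one may replace $\mathcal{M}(X)$ by a quasi-isomorphic bounded complex of finite dimensional vector spaces. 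An induction on the length of this complex, using the compatibility of $\phi$ with mapping cones, reduces the claim to the case of a one-dimensional $\mathbb{K}$-module concentrated in a single degree, where $\phi$ is literally the identity map on $\mathcal{N}(Y)$ (up to a shift).

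There is no genuine obstacle here; the only work is bookkeeping. The main thing to be careful about is tracking Koszul signs when writing out the bimodule structure maps on $D\mathcal{M}\otimes_\mathbb{K}\mathcal{N}$ and ${_\mathbb{K}}\mathrm{hom}(\mathcal{M},\mathcal{N})$ in the conventions of \cite{gan13}, so that the bimodule $A_\infty$-relations for $\phi$ check out on the nose rather than up to a universal sign.
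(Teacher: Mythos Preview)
Your proposal is correct and is exactly the standard argument the paper has in mind: the paper does not actually give a proof of this lemma, merely stating that ``one can easily check'' it (together with the companion Lemma~\ref{dual-dual}). Your construction of the evaluation-tensor morphism and the objectwise reduction to the classical fact for $V\in\mathrm{Perf}_\mathbb{K}$ is precisely the verification being alluded to.
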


The following can be thought of as a bimodule version of Lemma \ref{lin-dual}.

\begin{prop}[{\cite[Proposition 2.16]{gan13}}]\label{bimod-dual}
Let $\mathcal{M},\mathcal{N} \in {_\mathcal{A}}\mathrm{Perf}_\mathcal{A}$.
Then we have a natural quasi-isomorphism
\begin{equation*}
\mathcal{M}^! \otimes_{(\mathcal{A},\mathcal{A})} \mathcal{N} \simeq {_\mathcal{A}}\mathrm{hom}_\mathcal{A}(\mathcal{M},\mathcal{N}).
\end{equation*}
\end{prop}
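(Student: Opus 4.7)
My plan is to prove the quasi-isomorphism by a standard dévissage in the variable $\mathcal{M}$, reducing to the case of Yoneda bimodules. Introduce the contravariant dg functors
\begin{equation*}
F(\mathcal{M}) := \mathcal{M}^! \otimes_{(\mathcal{A},\mathcal{A})} \mathcal{N}, \qquad G(\mathcal{M}) := {_\mathcal{A}}\mathrm{hom}_\mathcal{A}(\mathcal{M},\mathcal{N})
\end{equation*}
from ${_\mathcal{A}}\mathrm{Mod}_\mathcal{A}$ to chain complexes. Each of the building blocks $(-)^!$, $-\otimes_{(\mathcal{A},\mathcal{A})}\mathcal{N}$ and ${_\mathcal{A}}\mathrm{hom}_\mathcal{A}(-,\mathcal{N})$ is exact and preserves split summands on the pretriangulated dg category ${_\mathcal{A}}\mathrm{Mod}_\mathcal{A}$, so both $F$ and $G$ send cones to cones and retracts to retracts. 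Consequently, the full subcategory of ${_\mathcal{A}}\mathrm{Mod}_\mathcal{A}$ on which a natural transformation $\eta : F \to G$ is a quasi-isomorphism is automatically split-closed and pretriangulated. Since ${_\mathcal{A}}\mathrm{Perf}_\mathcal{A}$ is by definition the split-closed pretriangulated subcategory generated by the Yoneda bimodules $\mathcal{Y}^l_X \otimes_\mathbb{K} \mathcal{Y}^r_Y$, it will suffice to exhibit a natural $\eta$ and verify it is a quasi-isomorphism on these generators.

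For $\eta_\mathcal{M}$ I would take the categorical evaluation map: given bimodule cocycles $\phi : \mathcal{M} \to \mathcal{Y}^l_X \otimes_\mathbb{K} \mathcal{Y}^r_Y$ and $n \in \mathcal{N}(X,Y)$, the Yoneda lemma identifies $n$ with a bimodule morphism $\mathcal{Y}^l_X \otimes_\mathbb{K} \mathcal{Y}^r_Y \to \mathcal{N}$, and composition with $\phi$ gives an element of ${_\mathcal{A}}\mathrm{hom}_\mathcal{A}(\mathcal{M},\mathcal{N})$; this construction is manifestly natural in $\mathcal{M}$. For the base case $\mathcal{M} = \mathcal{Y}^l_X \otimes_\mathbb{K} \mathcal{Y}^r_Y$, one computes both $F(\mathcal{M})$ and $G(\mathcal{M})$ by iterating Proposition \ref{hom-ten} (the hom-tensor adjunction) together with the one-sided Yoneda lemma, and shows that each side is canonically quasi-isomorphic to the bimodule evaluation of $\mathcal{N}$ on the pair $(X,Y)$. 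Verifying that $\eta_\mathcal{M}$ induces the identity under these two identifications finishes the base case, and the dévissage from the first paragraph then completes the proof.

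The main obstacle is the base-case computation itself: one needs to unravel $(\mathcal{Y}^l_X \otimes_\mathbb{K} \mathcal{Y}^r_Y)^!$ using two nested layers of the adjunction in Proposition \ref{hom-ten}, and then push through one more layer to evaluate the tensor product $\mathcal{M}^! \otimes_{(\mathcal{A},\mathcal{A})} \mathcal{N}$ and compare it with ${_\mathcal{A}}\mathrm{hom}_\mathcal{A}(\mathcal{M},\mathcal{N})$. This requires tracking signs and degree shifts through the bar-complex models of both $\otimes_{(\mathcal{A},\mathcal{A})}$ and ${_\mathcal{A}}\mathrm{hom}_\mathcal{A}$ so that the induced identification agrees on the nose with $\eta_\mathcal{M}$, rather than with some shift or sign variant. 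Once this is verified naturally in $\mathcal{M}$, the dévissage and the exactness/retract-preservation properties of $F$ and $G$ are formal, so the Yoneda-bimodule case really carries all of the content of the proposition.
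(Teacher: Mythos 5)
Your outline is essentially correct, but note that the paper itself gives no proof of Proposition \ref{bimod-dual}: it is quoted verbatim from Ganatra \cite[Proposition 2.16]{gan13}, so there is nothing internal to compare against; your argument is in fact the standard one underlying the cited result. The dévissage in $\mathcal{M}$ to Yoneda bimodules, using that $(-)^!$, $-\otimes_{(\mathcal{A},\mathcal{A})}\mathcal{N}$ and ${_\mathcal{A}}\mathrm{hom}_\mathcal{A}(-,\mathcal{N})$ send cones to cones and retracts to retracts, is the right reduction (over a field every module quasi-isomorphism is a homotopy equivalence, so the locus where $\eta$ is a quasi-isomorphism is also closed under quasi-isomorphism, which you should say explicitly since ${_\mathcal{A}}\mathrm{Perf}_\mathcal{A}$ is only split-generated by Yoneda bimodules up to such equivalences). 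Two small points to tighten: (i) your evaluation map $\eta_\mathcal{M}$ is only specified on the length-zero part of the two-sided bar complex computing $\mathcal{M}^!\otimes_{(\mathcal{A},\mathcal{A})}\mathcal{N}$; to get an actual closed natural transformation you must give its components on all bar terms (the ``collapse/composition'' map), which is exactly the sign/degree bookkeeping you defer, and it is the only genuinely nontrivial verification besides the base case. (ii) Your proof never uses perfectness of $\mathcal{N}$, so you prove the slightly stronger statement (perfect $\mathcal{M}$, arbitrary $\mathcal{N}$), which matches the form of the result in the cited source and is harmless for the paper's application in Proposition \ref{sm-prop}.
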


The following can be proved similarly as \cite[Lemma 4.1]{kel08} (also see \cite[Lemma 3.4]{kel11}).

\begin{prop}\label{sm-prop}
Let $\mathcal{A}$ be a smooth $A_\infty$-category.
Then $\mathrm{Prop}_\mathcal{A} \subset \mathrm{Perf}_\mathcal{A}$ and, for any $\mathcal{M} \in \mathrm{Prop}_\mathcal{A}$ and $\mathcal{N} \in \mathrm{Perf}_\mathcal{A}$, there is a natural quasi-isomorphism
\begin{equation*}
D\mathrm{hom}_\mathcal{A}(\mathcal{M},\mathcal{N}) \simeq \mathrm{hom}_\mathcal{A}(\mathcal{N} \otimes_\mathcal{A} \mathcal{A}_\Delta^!,\mathcal{M}).
\end{equation*}
\end{prop}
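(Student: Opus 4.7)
The plan is to split the proof into the inclusion $\mathrm{Prop}_\mathcal{A}\subset\mathrm{Perf}_\mathcal{A}$ and the natural quasi-isomorphism, with perfectness of the diagonal bimodule $\mathcal{A}_\Delta$ (i.e.\ smoothness) as the key input in both. Concretely, smoothness means that $\mathcal{A}_\Delta$ is split-generated inside ${_\mathcal{A}}\mathrm{Mod}_\mathcal{A}$ by the Yoneda bimodules $\mathcal{Y}^l_X\otimes_\mathbb{K}\mathcal{Y}^r_Y$, and each of the two assertions is attacked by dévissage along this generating class.

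For the inclusion, I would start from the natural quasi-isomorphism $\mathcal{M}\simeq\mathcal{M}\otimes_\mathcal{A}\mathcal{A}_\Delta$. The dg functor $\mathcal{M}\otimes_\mathcal{A}-\colon{_\mathcal{A}}\mathrm{Mod}_\mathcal{A}\to\mathrm{Mod}_\mathcal{A}$ is exact and preserves split summands, so $\mathcal{M}$ is split-generated by modules of the form $\mathcal{M}\otimes_\mathcal{A}(\mathcal{Y}^l_X\otimes_\mathbb{K}\mathcal{Y}^r_Y)\simeq\mathcal{M}(X)\otimes_\mathbb{K}\mathcal{Y}^r_Y$. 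Since $\mathcal{M}$ is proper, each $\mathcal{M}(X)$ lies in $\mathrm{Perf}_\mathbb{K}$ and is therefore split-generated by $\mathbb{K}$; hence $\mathcal{M}(X)\otimes_\mathbb{K}\mathcal{Y}^r_Y$ is split-generated by the Yoneda module $\mathcal{Y}^r_Y$ and is perfect. It follows that $\mathcal{M}\in\mathrm{Perf}_\mathcal{A}$.

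For the natural quasi-isomorphism, I would first construct a natural transformation between the two sides by combining the hom-tensor adjunctions of Proposition \ref{hom-ten} with a canonical pairing derived from Proposition \ref{bimod-dual} applied to $\mathcal{A}_\Delta^!$. Both sides are then contravariant dg functors of $\mathcal{N}\in\mathrm{Perf}_\mathcal{A}$ (with $\mathcal{M}\in\mathrm{Prop}_\mathcal{A}$ fixed) which commute with shifts, cones and direct summands, so a second dévissage reduces the claim to $\mathcal{N}=\mathcal{Y}^r_Y$. In this base case, the right-hand side simplifies via $\mathcal{Y}^r_Y\otimes_\mathcal{A}\mathcal{A}_\Delta^!\simeq\mathcal{A}_\Delta^!(Y,-)$, while the left-hand side is approached through $\mathcal{M}\otimes_\mathcal{A}\mathcal{Y}^l_Y\simeq\mathcal{M}(Y)$ together with the adjunction $D(\mathcal{M}\otimes_\mathcal{A}\mathcal{Y}^l_Y)\simeq\mathrm{hom}_\mathcal{A}(\mathcal{M},D\mathcal{Y}^l_Y)$ extracted from Proposition \ref{hom-ten}. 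Matching the two computations is then mediated by the identification $\mathcal{A}_\Delta^!(Y,Z)\simeq{_\mathcal{A}}\mathrm{hom}_\mathcal{A}(\mathcal{A}_\Delta,\mathcal{Y}^l_Y\otimes_\mathbb{K}\mathcal{Y}^r_Z)$ obtained from Proposition \ref{bimod-dual}.

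The hard part will be constructing the natural transformation cleanly and tracking the bookkeeping around $\mathcal{A}_\Delta^!$ and the interplay between left and right Yoneda modules in the base case: essentially a Serre-duality-type identification of both sides with $D\mathcal{M}(Y)$ (in an appropriately twisted sense). Once the natural map is in place and its specialization on Yoneda modules is understood, the rest of the argument is a formal dévissage along $\mathrm{Perf}_\mathcal{A}$.
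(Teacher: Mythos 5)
Your first half is exactly the paper's argument: write $\mathcal{M}\simeq\mathcal{M}\otimes_\mathcal{A}\mathcal{A}_\Delta$, use smoothness to split-generate $\mathcal{A}_\Delta$ by Yoneda bimodules, and observe that $\mathcal{M}\otimes_\mathcal{A}(\mathcal{Y}^l_X\otimes_\mathbb{K}\mathcal{Y}^r_Y)\simeq\mathcal{M}(X)\otimes_\mathbb{K}\mathcal{Y}^r_Y$ is perfect because $\mathcal{M}(X)\in\mathrm{Perf}_\mathbb{K}$. No issues there.

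For the duality statement, however, there is a genuine gap: your plan defers precisely the step where all the content lies, and the hints you give for the base case would not close. Reducing to $\mathcal{N}=\mathcal{Y}^r_Y$ by d\'evissage is legitimate in principle (both sides are exact and contravariant in $\mathcal{N}$), but the base case is not easier than the general case, and the route you sketch is off target: the adjunction $D(\mathcal{M}\otimes_\mathcal{A}\mathcal{Y}^l_Y)\simeq\mathrm{hom}_\mathcal{A}(\mathcal{M},D\mathcal{Y}^l_Y)$ computes $D\mathcal{M}(Y)$, which is \emph{not} the left-hand side $D\mathrm{hom}_\mathcal{A}(\mathcal{M},\mathcal{Y}^r_Y)$, and $D\mathcal{Y}^l_Y\not\simeq\mathcal{Y}^r_Y$ in general (this fails exactly in the non-proper situations, such as wrapped Fukaya categories, that the proposition is designed for; already for $\mathcal{A}=\mathbb{K}[x]$, $\mathcal{M}$ the simple module and $\mathcal{N}=\mathcal{A}$ the two sides live in a shifted degree, so no untwisted identification with $D\mathcal{M}(Y)$ can hold). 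What actually makes the base case (and the general case) work is the bimodule duality of Proposition \ref{bimod-dual} applied to $\mathcal{A}_\Delta$ against the bimodule $D\mathcal{M}\otimes_\mathbb{K}\mathcal{N}$, and for that you must first know this bimodule is perfect, i.e.\ you must apply your first half to the \emph{left} module $D\mathcal{M}\in{_\mathcal{A}}\mathrm{Prop}$ to get $D\mathcal{M}\in{_\mathcal{A}}\mathrm{Perf}$ --- a step your proposal never invokes.

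The paper avoids the d\'evissage in $\mathcal{N}$ altogether by producing one natural chain valid for all perfect $\mathcal{N}$: $\mathcal{N}\otimes_\mathcal{A}\mathcal{A}_\Delta^!\otimes_\mathcal{A} D\mathcal{M}\simeq(D\mathcal{M}\otimes_\mathbb{K}\mathcal{N})\otimes_{(\mathcal{A},\mathcal{A})}\mathcal{A}_\Delta^!\simeq{_\mathcal{A}}\mathrm{hom}_\mathcal{A}(\mathcal{A}_\Delta,D\mathcal{M}\otimes_\mathbb{K}\mathcal{N})\simeq{_\mathcal{A}}\mathrm{hom}_\mathcal{A}(\mathcal{A}_\Delta,{_\mathbb{K}}\mathrm{hom}(\mathcal{M},\mathcal{N}))\simeq\mathrm{hom}_\mathcal{A}(\mathcal{M},\mathcal{N})$, using Proposition \ref{bimod-dual}, Lemma \ref{lin-dual} and Proposition \ref{hom-ten}; then dualizing and using Proposition \ref{hom-ten} and Lemma \ref{dual-dual} gives $D\mathrm{hom}_\mathcal{A}(\mathcal{M},\mathcal{N})\simeq\mathrm{hom}_\mathcal{A}(\mathcal{N}\otimes_\mathcal{A}\mathcal{A}_\Delta^!,\mathcal{M})$, with naturality built in. To repair your write-up, replace the base-case sketch with this chain (or its specialization to $\mathcal{N}=\mathcal{Y}^r_Y$), and record explicitly the intermediate fact $D\mathcal{M}\otimes_\mathbb{K}\mathcal{N}\in{_\mathcal{A}}\mathrm{Perf}_\mathcal{A}$; once you do that, the natural transformation you were worried about constructing comes for free and the d\'evissage becomes unnecessary.
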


\begin{rmk}
Note that if $\mathcal{A}$ is a right proper $A_\infty$-category, i.e., every right Yoneda $\mathcal{A}$-module is proper, then clearly we have $\mathrm{Prop}_\mathcal{A} \supset \mathrm{Perf}_\mathcal{A}$.
\end{rmk}

\begin{proof}
Let us first show that $\mathrm{Prop}_\mathcal{A} \subset \mathrm{Perf}_\mathcal{A}$.
Let $\mathcal{M} \in \mathrm{Prop}_\mathcal{A}$.
Since $\mathcal{M} \simeq \mathcal{M} \otimes_\mathcal{A} \mathcal{A}_\Delta$ and $\mathcal{A}_\Delta \in {_\mathcal{A}}\mathrm{Perf}_\mathcal{A}$ by assumption, it is enough to show that
\begin{equation*}
\mathcal{M} \otimes_\mathcal{A} \mathcal{P} \in \mathrm{Perf}_\mathcal{A}
\end{equation*}
for every $\mathcal{P} \in {_\mathcal{A}}\mathrm{Perf}_\mathcal{A}$.
Indeed, as $\mathcal{P}$ is split-generated by Yoneda $(\mathcal{A},\mathcal{A})$-bimodules $\mathcal{Y}^l_X \otimes_\mathbb{K} \mathcal{Y}^r_Y$, we see that $\mathcal{M} \otimes_\mathcal{A} \mathcal{P}$ is split-generated by right $\mathcal{A}$-modules of the form $\mathcal{M} \otimes_\mathcal{A} \mathcal{Y}^l_X \otimes_\mathbb{K} \mathcal{Y}^r_Y \simeq \mathcal{M}(X) \otimes_\mathbb{K} \mathcal{Y}^r_Y$.
Since $\mathcal{M}(X) \in \mathrm{Perf}_\mathbb{K}$, each $\mathcal{M}(X) \otimes_\mathbb{K} \mathcal{Y}^r_Y$ is split-generated by $\mathcal{Y}^r_Y$.
This shows that $\mathcal{M} \otimes_\mathcal{A} \mathcal{P}$ is split-generated by right Yoneda $\mathcal{A}$-modules.

Now let $\mathcal{M} \in \mathrm{Prop}_\mathcal{A}$ and $\mathcal{N} \in \mathrm{Perf}_\mathcal{A}$.
Since $D\mathcal{M} \in {_\mathcal{A}}\mathrm{Prop}$, we see that $D\mathcal{M} \in {_\mathcal{A}}\mathrm{Perf}$ as above.
This shows that $D\mathcal{M} \otimes_\mathbb{K} \mathcal{N} \in {_\mathcal{A}}\mathrm{Perf}_\mathcal{A}$.
Then we have natural quasi-isomorphisms
\begin{align*}
\mathcal{N} \otimes_\mathcal{A} \mathcal{A}_\Delta^! \otimes_\mathcal{A} D\mathcal{M}
&\simeq (D\mathcal{M} \otimes_\mathbb{K} \mathcal{N}) \otimes_{(\mathcal{A},\mathcal{A})} \mathcal{A}_\Delta^!\\
&\simeq {_\mathcal{A}}\mathrm{hom}_\mathcal{A}(\mathcal{A}_\Delta,D\mathcal{M} \otimes_\mathbb{K} \mathcal{N})\\
&\simeq {_\mathcal{A}}\mathrm{hom}_\mathcal{A}(\mathcal{A}_\Delta,{_\mathbb{K}}\mathrm{hom}(\mathcal{M},\mathcal{N}))\\
&\simeq \mathrm{hom}_\mathcal{A}(\mathcal{M} \otimes_\mathcal{A} \mathcal{A}_\Delta,\mathcal{N})\\
&\simeq \mathrm{hom}_\mathcal{A}(\mathcal{M},\mathcal{N})
\end{align*}
where the second line follows from Proposition \ref{bimod-dual}, the third line follows from Lemma \ref{lin-dual} and the fourth line follows from Proposition \ref{hom-ten}.
Therefore we have
\begin{align*}
D\mathrm{hom}_\mathcal{A}(\mathcal{M},\mathcal{N})
&\simeq \mathrm{hom}_\mathbb{K}(\mathcal{N} \otimes_\mathcal{A} \mathcal{A}_\Delta^! \otimes_\mathcal{A} D\mathcal{M},\mathbb{K})\\
&\simeq \mathrm{hom}_\mathcal{A}(\mathcal{N} \otimes_\mathcal{A} \mathcal{A}_\Delta^!,\mathrm{hom}_\mathbb{K}(D\mathcal{M},\mathbb{K}))\\
&\simeq \mathrm{hom}_\mathcal{A}(\mathcal{N} \otimes_\mathcal{A} \mathcal{A}_\Delta^!,\mathcal{M})
\end{align*}
where the second line follows from Proposition \ref{hom-ten} and the third line follows from Lemma \ref{dual-dual}.
\end{proof}

\subsection{Calabi--Yau $A_\infty$-categories}

A triangulated category $\mathcal{C}$ is called {\em $d$-Calabi--Yau} if $\mathrm{Hom}_\mathcal{C}(X,Y)$ is finite dimensional for every $X,Y \in \mathcal{C}$ and there is a natural isomorphism
\begin{equation*}
D\mathrm{Hom}_\mathcal{C}(X,Y) \cong \mathrm{Hom}_\mathcal{C}(Y,X[d])
\end{equation*}
for every $X,Y \in \mathcal{C}$, or equivalently, if there is a non-degenerate pairing
\begin{equation*}
\beta_{X,Y} : \mathrm{Hom}_\mathcal{C}(Y,X[d]) \times \mathrm{Hom}_\mathcal{C}(X,Y) \to \mathbb{K}
\end{equation*}
which is bifunctorial in $X,Y \in \mathcal{C}$.
By abuse of notation, we also regard $\beta_{X,Y}$ as a linear map $\mathrm{Hom}_\mathcal{C}(Y,X[d]) \otimes \mathrm{Hom}_\mathcal{C}(X,Y) \to \mathbb{K}$.
We will sometimes refer to $\beta$ as the {\em $d$-Calabi--Yau pairing} on $\mathcal{C}$.

A pair $(\mathcal{C},\mathcal{D})$ of a triangulated category $\mathcal{C}$ and a thick subcategory $\mathcal{D} \subset \mathcal{C}$ is called {\em relative $(d+1)$-Calabi--Yau} if $\mathrm{Hom}_\mathcal{C}(X,Y)$ is finite dimensional for every $X \in \mathcal{D},Y \in \mathcal{C}$ and there is a natural isomorphism
\begin{equation*}
D\mathrm{Hom}_\mathcal{C}(X,Y) \cong \mathrm{Hom}_\mathcal{C}(Y,X[d+1])
\end{equation*}
for every $X \in \mathcal{D}$ and $Y \in \mathcal{C}$, or equivalently, if there is a non-degenerate pairing
\begin{equation*}
\beta'_{X,Y} : \mathrm{Hom}_\mathcal{C}(Y,X[d+1]) \times \mathrm{Hom}_\mathcal{C}(X,Y) \to \mathbb{K}
\end{equation*}
which is bifunctorial in $X \in \mathcal{D}$ and $Y \in \mathcal{C}$.
We will sometimes refer to $\beta'$ as the {\em relative $(d+1)$-Calabi--Yau pairing} on $(\mathcal{C},\mathcal{D})$.

Now let us recall the following notion.

\begin{dfn}[{\cite[Section 10.2]{kon-soi09}}]
A smooth $A_\infty$-category $\mathcal{A}$ is called {\em $(d+1)$-Calabi--Yau} if the inverse dualizing bimodule $\mathcal{A}_\Delta^!$ is isomorphic to $\mathcal{A}_\Delta[-d-1]$ in the derived category of $(\mathcal{A},\mathcal{A})$-bimodules.
\end{dfn}

Applying Proposition \ref{sm-prop} to a Calabi--Yau $A_\infty$-category, we obtain the following corollary.

\begin{cor}\label{CY-cor}
Let $\mathcal{A}$ be a $(d+1)$-Calabi--Yau $A_\infty$-category.
Then $(D^\pi\mathcal{A},D_c\mathcal{A})$ is relative $(d+1)$-Calabi--Yau and $D_c\mathcal{A}$ is $(d+1)$-Calabi--Yau.
\end{cor}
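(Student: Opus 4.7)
My plan is to feed the Calabi--Yau identification $\mathcal{A}_\Delta^! \simeq \mathcal{A}_\Delta[-d-1]$ into the conclusion of Proposition \ref{sm-prop}, and then separately argue the finite-dimensionality of Hom spaces that the definitions of Calabi--Yau and relative Calabi--Yau demand.

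First, let me handle the core duality. Take $\mathcal{M} \in \mathrm{Prop}_\mathcal{A}$ and $\mathcal{N} \in \mathrm{Perf}_\mathcal{A}$. Since $\mathcal{A}$ is smooth, Proposition \ref{sm-prop} gives a natural quasi-isomorphism
\begin{equation*}
D\mathrm{hom}_\mathcal{A}(\mathcal{M},\mathcal{N}) \simeq \mathrm{hom}_\mathcal{A}(\mathcal{N} \otimes_\mathcal{A} \mathcal{A}_\Delta^!,\mathcal{M}).
\end{equation*}
The $(d+1)$-Calabi--Yau condition replaces $\mathcal{A}_\Delta^!$ by $\mathcal{A}_\Delta[-d-1]$ in $H^*({_\mathcal{A}}\mathrm{Mod}_\mathcal{A})$, and since tensoring with $\mathcal{A}_\Delta$ is naturally quasi-isomorphic to the identity, the right hand side becomes naturally quasi-isomorphic to $\mathrm{hom}_\mathcal{A}(\mathcal{N},\mathcal{M}[d+1])$. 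Passing to $H^0$ yields a natural isomorphism $D\mathrm{Hom}_{D\mathcal{A}}(\mathcal{M},\mathcal{N}) \cong \mathrm{Hom}_{D\mathcal{A}}(\mathcal{N},\mathcal{M}[d+1])$, which is exactly the pairing demanded by the two notions of Calabi--Yau, provided both sides make sense and are finite-dimensional.

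Next I would check the finite-dimensionality. For the relative statement, let $\mathcal{M} \in D_c\mathcal{A}=\mathrm{Prop}_\mathcal{A}$ and $\mathcal{N} \in D^\pi\mathcal{A}=\mathrm{Perf}_\mathcal{A}$. Since $\mathcal{N}$ is split-generated by right Yoneda modules, it suffices to check finite-dimensionality of $H^*\mathrm{hom}_\mathcal{A}(\mathcal{M},\mathcal{Y}^r_X)$ for $X \in \mathcal{A}$; but this cohomology is $H^*(D\mathcal{M})(X)$, whose dimension equals that of $H^*\mathcal{M}(X)$, which is finite by propriety of $\mathcal{M}$. Taking shifts, cones, and summands preserves finite total dimension, so $\mathrm{Hom}_\mathcal{C}(\mathcal{M},\mathcal{N})$ is finite-dimensional. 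For the absolute statement on $D_c\mathcal{A}$, both arguments are proper, and by Proposition \ref{sm-prop} they are also perfect; the same split-generation argument applies. This also makes the right-hand side $\mathrm{Hom}(\mathcal{N},\mathcal{M}[d+1])$ of the putative pairing finite-dimensional.

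Finally, I would remark that naturality and bifunctoriality of the pairing $\beta'$ (resp.\ $\beta$) are immediate from the naturality in $\mathcal{M},\mathcal{N}$ of the quasi-isomorphism in Proposition \ref{sm-prop} and of the Calabi--Yau isomorphism $\mathcal{A}_\Delta^! \simeq \mathcal{A}_\Delta[-d-1]$, since all constructions used (linear dual, tensor with the diagonal bimodule, Yoneda embedding) are functorial at the chain level. The main subtlety I anticipate, if any, is bookkeeping the sign/degree shift when passing between the cochain-level quasi-isomorphism and the shifted Hom in the derived category, together with confirming that the Calabi--Yau isomorphism in the \emph{derived} bimodule category is genuinely sufficient for the argument (one needs a zig-zag of bimodule quasi-isomorphisms, which is fine because $\mathcal{A}_\Delta$ is perfect and both $\mathcal{N}\otimes_\mathcal{A}-$ and $\mathrm{hom}_\mathcal{A}(-,\mathcal{M})$ descend to derived functors on perfect bimodules).
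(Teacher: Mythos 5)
Your main step---substituting $\mathcal{A}_\Delta^! \simeq \mathcal{A}_\Delta[-d-1]$ into the conclusion of Proposition \ref{sm-prop} and using that $-\otimes_\mathcal{A}\mathcal{A}_\Delta$ is quasi-isomorphic to the identity, so as to obtain $D\mathrm{Hom}_{D\mathcal{A}}(\mathcal{M},\mathcal{N}) \cong \mathrm{Hom}_{D\mathcal{A}}(\mathcal{N},\mathcal{M}[d+1])$ for $\mathcal{M}$ proper and $\mathcal{N}$ perfect---is exactly the route the paper intends (it records no further detail). The gap is in your finite-dimensionality argument. You assert that $H^*\mathrm{hom}_\mathcal{A}(\mathcal{M},\mathcal{Y}^r_X)$ ``is'' $H^*\bigl((D\mathcal{M})(X)\bigr)$ and that this is finite ``by propriety of $\mathcal{M}$''. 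That identification is not a general fact: $\mathrm{hom}_\mathcal{A}(\mathcal{M},\mathcal{Y}^r_X)$ is an $\mathcal{A}$-linear dual of $\mathcal{M}$, not its $\mathbb{K}$-linear dual, and the two differ. For example, over the smooth but non-Calabi--Yau algebra $\mathcal{A}=\mathbb{K}\langle x,y\rangle$ the proper module $\mathcal{M}=\mathbb{K}$ has $\mathrm{Ext}^1_\mathcal{A}(\mathbb{K},\mathcal{A})$ infinite dimensional, while $D\mathcal{M}=\mathbb{K}$; so smoothness and propriety alone cannot give your claim. Even in the $(d+1)$-Calabi--Yau case the correct comparison is $\mathrm{hom}_\mathcal{A}(\mathcal{M},\mathcal{Y}^r_X)\simeq D(\mathcal{M}(X))[d+1]$, i.e.\ it is off by a shift from what you wrote and, more importantly, it is itself an instance of the duality you are in the middle of establishing, not an independent input.

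The repair is short and keeps your structure intact: the quasi-isomorphism $D\mathrm{hom}_\mathcal{A}(\mathcal{M},\mathcal{N})\simeq\mathrm{hom}_\mathcal{A}(\mathcal{N},\mathcal{M}[d+1])$ requires no finiteness hypothesis, so establish it first (as you do). Then observe that $H^*\mathrm{hom}_\mathcal{A}(\mathcal{N},\mathcal{M}[d+1])$ is finite dimensional, because $\mathcal{N}$ is split-generated by right Yoneda modules, $\mathrm{hom}_\mathcal{A}(\mathcal{Y}^r_X,\mathcal{M})\simeq\mathcal{M}(X)$ has finite total cohomology by propriety of $\mathcal{M}$, and shifts, finitely many cones and direct summands preserve this. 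Since a vector space whose $\mathbb{K}$-linear dual is finite dimensional is itself finite dimensional, the quasi-isomorphism then forces $\mathrm{Hom}_{D\mathcal{A}}(\mathcal{M},\mathcal{N})$ to be finite dimensional, which is what the definitions of relative $(d+1)$-Calabi--Yau and of $(d+1)$-Calabi--Yau require; the absolute statement for $D_c\mathcal{A}$ follows by taking $\mathcal{M},\mathcal{N}$ both proper, noting $\mathrm{Prop}_\mathcal{A}\subset\mathrm{Perf}_\mathcal{A}$ from Proposition \ref{sm-prop} so that the same duality applies. With that substitution for your second paragraph, the proof is correct and coincides with the paper's argument.
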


\section{Calabi--Yau triples}

\subsection{Calabi--Yau triples}

Let $\mathcal{C}$ be a triangulated category.
Let $\mathcal{D} \subset \mathcal{C}$ be a thick subcategory and $\mathcal{X} \subset \mathcal{C}$ be a full subcategory.
We say that $\mathcal{D}$ is {\em split-generated} by $\mathcal{X}$ (or $\mathcal{X}$ {\em split-generates} $\mathcal{D}$) if $\mathcal{D}$ coincides with the smallest thick subcategory containing $\mathcal{X}$.
An object $M \in \mathcal{C}$ is called a {\em split-generator} if $\{M\}$ split-generates $\mathcal{C}$.
A split-generator $M \in \mathcal{C}$ is called a {\em silting object} if it moreover satisfies $\mathrm{Hom}_\mathcal{C}(M,M[p])=0$ for all $p>0$.
For any object $M \in \mathcal{C}$, define
\begin{gather*}
\mathcal{C}_M^{\leq 0} \coloneqq \{ X \in \mathcal{C} \,|\, \mathrm{Hom}_\mathcal{C}(M,X[p])=0 \text{ for all } p>0 \},\\
\mathcal{C}_M^{\geq 0} \coloneqq \{ X \in \mathcal{C} \,|\, \mathrm{Hom}_\mathcal{C}(M,X[p])=0 \text{ for all } p<0 \}.
\end{gather*}

\begin{dfn}[{\cite[Definition 4.3]{amy19},\cite[Section 5.1]{iya-yan18}}]
Let $\mathcal{C}$ be a split-closed triangulated category, $M \in \mathcal{C}$ be an object and $\mathcal{D} \subset \mathcal{C}$ be a thick subcategory.
The triple $(\mathcal{C},\mathcal{D},M)$ is called an {\em ST-triple} if it satisfies the following conditions:
\begin{enumerate}
\item $M$ is a silting object and $\mathrm{Hom}_\mathcal{C}(M,X)$ is finite dimensional for every $X \in \mathcal{D}$.
\item $(\mathcal{C}_M^{\leq 0},\mathcal{C}_M^{\geq 0})$ is a t-structure on $\mathcal{C}$.
\item $\mathcal{C}_M^{\geq 0} \subset \mathcal{D}$ and $(\mathcal{D}_M^{\leq 0},\mathcal{D}_M^{\geq 0})$ is a bounded t-structure on $\mathcal{D}$.
\end{enumerate}
An ST-triple $(\mathcal{C},\mathcal{D},M)$ is called a {\em $(d+1)$-Calabi--Yau triple} ($d \geq 1$) if it further satisfies the following condition:
\begin{enumerate}
\item[(4)] $(\mathcal{C},\mathcal{D})$ is {\em relative $(d+1)$-Calabi--Yau}.
\end{enumerate}
\end{dfn}

The heart of the t-structure $(\mathcal{C}_M^{\leq 0},\mathcal{C}_M^{\geq 0})$ can be described as follows.

\begin{prop}[{\cite[Proposition 4.6]{amy19}}]\label{ST-prop}
Let $(\mathcal{C},\mathcal{D},M)$ be an ST-triple and $\mathcal{H} \coloneqq \mathcal{C}_M^{\leq 0} \cap \mathcal{C}_M^{\geq 0}$ be the heart of the t-structure $(\mathcal{C}_M^{\leq 0},\mathcal{C}_M^{\geq 0})$.
Then $\mathcal{H} = \mathcal{D}_M^{\leq 0} \cap \mathcal{D}_M^{\geq 0}$ and the functor
\begin{equation*}
\mathrm{Hom}_\mathcal{C}(M,-) : \mathcal{H} \to \mathrm{mod}_{\mathrm{End}_\mathcal{C}(M)}
\end{equation*}
is an equivalence where $\mathrm{mod}_{\mathrm{End}_\mathcal{C}(M)}$ denotes the abelian category of finitely generated right $\mathrm{End}_\mathcal{C}(M)$-modules.
\end{prop}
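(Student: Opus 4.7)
The first claim is nearly tautological: condition (3) in the ST-triple definition gives $\mathcal{C}_M^{\geq 0}\subset\mathcal{D}$, so the heart $\mathcal{H}=\mathcal{C}_M^{\leq 0}\cap\mathcal{C}_M^{\geq 0}$ sits inside $\mathcal{D}$, and with $\mathcal{D}_M^{\leq 0}=\mathcal{D}\cap\mathcal{C}_M^{\leq 0}$ and similarly for $\mathcal{D}_M^{\geq 0}$, the equality $\mathcal{H}=\mathcal{D}_M^{\leq 0}\cap\mathcal{D}_M^{\geq 0}$ is immediate. For the equivalence, the plan is to exhibit a projective generator $P\in\mathcal{H}$ whose endomorphism ring is $\mathrm{End}_\mathcal{C}(M)$ and whose Yoneda functor on $\mathcal{H}$ coincides with $\mathrm{Hom}_\mathcal{C}(M,-)|_\mathcal{H}$, after which Morita theory finishes the job. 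I would take $P:=\tau^{\geq 0}M$, which lies in $\mathcal{H}$ because $M\in\mathcal{C}_M^{\leq 0}$ by the silting hypothesis. Applying $\mathrm{Hom}_\mathcal{C}(-,N)$ with $N\in\mathcal{H}$ to the truncation triangle $\tau^{<0}M\to M\to P\to(\tau^{<0}M)[1]$ and using the standard orthogonality $\mathrm{Hom}_\mathcal{C}(X,Y)=0$ for $X\in\mathcal{C}_M^{\leq -1}$ and $Y\in\mathcal{C}_M^{\geq 0}$ yields a natural isomorphism $\mathrm{Hom}_\mathcal{C}(P,N)\cong\mathrm{Hom}_\mathcal{C}(M,N)$. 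In particular $R:=\mathrm{End}_\mathcal{H}(P)\cong\mathrm{End}_\mathcal{C}(M)$, which is finite dimensional by (1) since $P\in\mathcal{D}$. Projectivity of $P$ in $\mathcal{H}$ is then automatic: any extension $0\to N\to E\to P\to 0$ is classified by an element of $\mathrm{Hom}_\mathcal{C}(P,N[1])\cong\mathrm{Hom}_\mathcal{C}(M,N[1])$, and the latter vanishes by the very definition of $\mathcal{C}_M^{\leq 0}$ since $N\in\mathcal{H}$.

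The main technical step, which I expect to be the principal obstacle, is showing that $P$ generates $\mathcal{H}$ in the abelian sense. Given $N\in\mathcal{H}$, choose generators of the finite-dimensional right $R$-module $\mathrm{Hom}_\mathcal{C}(M,N)$, lift them to a morphism $M^k\to N$, and factor through $P^k$ via the truncation $M\to P$ (possible because $N\in\mathcal{C}_M^{\geq 0}$) to obtain $g:P^k\to N$. To show $g$ is an epimorphism in $\mathcal{H}$, embed it in a triangle $P^k\to N\to C\to P^k[1]$ in $\mathcal{C}$ and take t-structure cohomology: the long exact sequence forces $C$ to have cohomology only in degrees $-1$ and $0$, with $H^0(C)\cong\mathrm{coker}_\mathcal{H}(g)$. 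Applying $\mathrm{Hom}_\mathcal{C}(M,-)$ to this triangle, surjectivity of $\mathrm{Hom}_\mathcal{C}(M,P^k)\to\mathrm{Hom}_\mathcal{C}(M,N)$ (by choice of generators) together with the silting vanishing $\mathrm{Hom}_\mathcal{C}(M,P^k[1])=0$ (from $P\in\mathcal{C}_M^{\leq 0}$) forces $\mathrm{Hom}_\mathcal{C}(M,C)=0$. Applying $\mathrm{Hom}_\mathcal{C}(M,-)$ once more to the truncation triangle $H^{-1}(C)[1]\to C\to H^0(C)$ yields $\mathrm{Hom}_\mathcal{C}(M,H^0(C))=0$; combined with the automatic vanishing of $\mathrm{Hom}_\mathcal{C}(M,H^0(C)[p])$ for $p\neq 0$ (as $H^0(C)\in\mathcal{H}$), the split-generation of $\mathcal{C}$ by $M$ forces $H^0(C)=0$.

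With $P$ established as a projective generator of $\mathcal{H}$ and $R$ a finite-dimensional (hence Noetherian) algebra, standard Morita theory closes the argument: $\mathrm{Hom}_\mathcal{H}(P,-)\simeq\mathrm{Hom}_\mathcal{C}(M,-)|_\mathcal{H}$ is exact and faithful (from projectivity and generation), lands in $\mathrm{mod}\,R$ by condition (1), is full by the usual lifting argument from free presentations via projectivity, and is essentially surjective by lifting any finite presentation $R^a\to R^b\to X\to 0$ to $P^a\to P^b$ in $\mathcal{H}$ and taking the cokernel.
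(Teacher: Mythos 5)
Your argument is correct, but note that the paper does not prove this proposition at all: it is quoted verbatim from \cite[Proposition 4.6]{amy19}, so there is no internal proof to compare against. What you have written is a self-contained proof along the classical lines of the silting literature: you exhibit $P=\tau^{\geq 0}M=H^0(M)$ as a projective object of the heart $\mathcal{H}$ with $\mathrm{End}_{\mathcal{H}}(P)\cong\mathrm{End}_{\mathcal{C}}(M)$ and $\mathrm{Hom}_{\mathcal{H}}(P,-)\cong\mathrm{Hom}_{\mathcal{C}}(M,-)|_{\mathcal{H}}$, prove that every object of $\mathcal{H}$ is a quotient of a finite power of $P$ (this is indeed the crux, and your cone-plus-truncation argument, using that $M$ split-generates $\mathcal{C}$ and that $\mathrm{Hom}_{\mathcal{C}}(M,X[p])$ vanishes automatically for $p\neq 0$ when $X\in\mathcal{H}$, is sound), and then invoke Morita theory for a projective generator over the finite-dimensional algebra $\mathrm{End}_{\mathcal{C}}(M)$ — whose finite-dimensionality you correctly extract from condition (1) via $\mathrm{End}_{\mathcal{C}}(M)\cong\mathrm{Hom}_{\mathcal{C}}(M,P)$ with $P\in\mathcal{H}\subset\mathcal{D}$, since $M$ itself need not lie in $\mathcal{D}$. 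Two small points deserve tightening, though neither is a real gap: first, for $N\in\mathcal{H}$ the truncation triangle only gives an \emph{injection} $\mathrm{Hom}_{\mathcal{C}}(P,N[1])\hookrightarrow\mathrm{Hom}_{\mathcal{C}}(M,N[1])$ rather than the isomorphism you assert (because $N[1]$ lies only in $\mathcal{C}_M^{\geq -1}$), but injectivity into the vanishing group is all you need for projectivity; second, to write the truncation triangle of $C=\mathrm{Cone}(g)$ as $H^{-1}(C)[1]\to C\to H^0(C)$ you should justify $C\in\mathcal{C}_M^{\leq 0}\cap\mathcal{C}_M^{\geq -1}$ by extension-closedness of the aisle and co-aisle (from the triangle $N\to C\to P^k[1]$), since mere vanishing of the t-cohomologies would not suffice if the t-structure were degenerate. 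With these touch-ups your proof stands on its own, which is arguably more informative than the paper's bare citation.
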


Since $\mathrm{End}_\mathcal{C}(M)$ is a finite dimensional algebra, the heart $\mathcal{H} \simeq \mathrm{mod}_{\mathrm{End}_\mathcal{C}(M)}$ is a length category (i.e., every object of $\mathcal{H}$ has a finite composition series) with finitely many (isomorphism classes of) simple objects.
As $(\mathcal{D}_M^{\leq 0},\mathcal{D}_M^{\geq 0})$ is a bounded t-structure on $\mathcal{D}$, this implies that the simple objects of $\mathcal{H}$ split-generate $\mathcal{D}$.

\begin{exa}[{\cite[Lemma 4.15]{amy19}, \cite[Section 2]{ami09}, \cite[Proposition 2.1]{kal-yan16}}]\label{ST-ex}
Let $R$ be a dg algebra satisfying the following conditions:
\begin{enumerate}
\item $H^p(R)=0$ for every $p>0$.
\item $H^0(R)$ is finite dimensional.
\item $D_cR \subset D^\pi R$.
\end{enumerate}
Then $(D^\pi R,D_cR,R)$ is an ST-triple.
If moreover $R$ is $(d+1)$-Calabi--Yau then $(D^\pi R,D_cR)$ is relative $(d+1)$-Calabi--Yau by Corollary \ref{CY-cor} and hence $(D^\pi R,D_cR,R)$ is a $(d+1)$-Calabi--Yau triple.
\end{exa}

\subsection{Quotient categories}

Let $\mathcal{D}$ be a triangulated subcategory of $\mathcal{C}$.
The {\em quotient category} $\mathcal{C}/\mathcal{D}$ has the same set of objects as $\mathcal{C}$ and for $X,Y \in \mathcal{C}/\mathcal{D}$, the morphism space $\mathrm{Hom}_{\mathcal{C}/\mathcal{D}}(X,Y)$ consists of equivalence classes of diagrams in $\mathcal{C}$ of the form
\begin{equation*}
s^{-1} \circ f \coloneqq
\begin{tikzcd}
X \ar[rd,"f",swap] & & Y \ar[ld,"s"]\\
& Y' &
\end{tikzcd}
\end{equation*}
such that $\mathrm{Cone}(s) \in \mathcal{D}$.
Two diagrams $s^{-1} \circ f,t^{-1} \circ g$ are equivalent if there is a commutative diagram
\begin{equation*}
\begin{tikzcd}
& Y' \ar[d] &\\
X \ar[rd,"g",swap] \ar[ru,"f"] \ar[r] & Z & Y \ar[ld,"t"] \ar[lu,"s",swap] \ar[l,"u"]\\
& Y'' \ar[u] &
\end{tikzcd}
\end{equation*}
in $\mathcal{C}$ such that $\mathrm{Cone}(u) \in \mathcal{D}$.
The composition of $s^{-1} \circ f \in \mathrm{Hom}_{\mathcal{C}/\mathcal{D}}(X,Y)$ and $t^{-1} \circ g \in \mathrm{Hom}_{\mathcal{C}/\mathcal{D}}(Y,Z)$ is defined as $(s' \circ t)^{-1} \circ (g' \circ f)$ using the diagram
\begin{equation*}
\begin{tikzcd}
X \ar[rd,"f",swap] & & Y \ar[ld,"s"] \ar[rd,"g",swap] & & Z \ar[ld,"t"]\\
& Y' \ar[rd,"g'",swap] & & Z' \ar[ld,"s'"] &\\
& & W & &
\end{tikzcd}
\end{equation*}
where the square
\begin{equation*}
\begin{tikzcd}
Y \ar[r,"s"] \ar[d,"g",swap] & Y' \ar[d,"g'"]\\
Z' \ar[r,"s'",swap] & W
\end{tikzcd}
\end{equation*}
is a homotopy cartesian square.
The quotient category $\mathcal{C}/\mathcal{D}$ has a unique triangulated structure which makes the quotient functor $Q : \mathcal{C} \to \mathcal{C}/\mathcal{D}$ exact.

\subsection{Cluster categories}

Let $\mathcal{C}$ be a triangulated category.
For an object $M \in \mathcal{C}$, we denote by $\mathrm{add}(M)$ the smallest full additive subcategory containing $M$ and closed under taking direct summands.
For two full subcategories $\mathcal{D},\mathcal{E} \subset \mathcal{C}$, we define $\mathcal{D} * \mathcal{E}$ to be the full subcategory of $\mathcal{C}$ whose set of objects is
\begin{equation*}
\{X \in \mathcal{C} \,|\, \text{there is an exact triangle } Y \to X \to Z \to Y[1] \text{ such that } Y \in \mathcal{D}, Z \in \mathcal{E} \}.
\end{equation*}
For an object $M \in \mathcal{C}$ and integers $p \leq q$, set
\begin{equation*}
M^{[p,q]} \coloneqq \mathrm{add}(M)[p] * \mathrm{add}(M)[p+1] * \cdots * \mathrm{add}(M)[q-1] * \mathrm{add}(M)[q].
\end{equation*}

Now let $\mathcal{C}$ be a $d$-Calabi--Yau triangulated category.
An object $M \in \mathcal{C}$ is called a {\em $d$-cluster-tilting object} if
\begin{equation*}
\mathrm{add}(M) = \{ X \in \mathcal{C} \,|\, \mathrm{Hom}_\mathcal{C}(M,X[p])=0 \text{ for all } 1 \leq p \leq d-1 \}.
\end{equation*}

\begin{thm}[{\cite[Theorem 5.8]{iya-yan18}}]\label{IY-thm}
Let $(\mathcal{C},\mathcal{D},M)$ be a $(d+1)$-Calabi--Yau triple.
Then we have the following:
\begin{enumerate}
\item The quotient category $\mathcal{C}/\mathcal{D}$ is a $d$-Calabi--Yau triangulated category.
\item The quotient functor $\mathcal{C} \to \mathcal{C}/\mathcal{D}$ induces an equivalence $M^{[0,d-1]} \to \mathcal{C}/\mathcal{D}$ of additive categories.
\item $M$ is a $d$-cluster-tilting object of $\mathcal{C}/\mathcal{D}$.
\end{enumerate}
\end{thm}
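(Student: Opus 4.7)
The plan is to establish (2) first, since a concrete additive model $M^{[0,d-1]}$ for $\mathcal{C}/\mathcal{D}$ makes both (1) and (3) accessible. Two features of the Calabi--Yau triple will do the heavy lifting: the t-structure $(\mathcal{C}_M^{\leq 0},\mathcal{C}_M^{\geq 0})$ furnished by the silting object, together with the containment $\mathcal{C}_M^{\geq 1}\subset\mathcal{D}$, and the relative $(d+1)$-Calabi--Yau duality, which pairs $\mathrm{Hom}_\mathcal{C}(D,Y)$ non-degenerately with $\mathrm{Hom}_\mathcal{C}(Y,D[d+1])$ for $D\in\mathcal{D}$.

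For essential surjectivity in (2), given $X\in\mathcal{C}$ I would first replace $X$ by $\tau^{\leq 0}X$, which is legitimate in the quotient since $\tau^{\geq 1}X\in\mathcal{C}_M^{\geq 1}\subset\mathcal{D}$. To push $\tau^{\leq 0}X$ further into the fundamental domain I would build a Postnikov-style tower realizing it as an iterated extension of its t-structure cohomology groups $H^{-i}(X)\in\mathcal{H}$, and use the relative CY pairing to show that the portion of the tower of degree $\leq -d$ is absorbed into $\mathcal{D}$: vanishing conditions of the form $\mathrm{Hom}_\mathcal{C}(M,-[p])=0$ translate, via the CY shift by $[d+1]$, into vanishing of the extension classes linking the deep-negative part to the rest, so the connecting maps can be rerouted through $\mathcal{D}$. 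The surviving representative then lies in $M^{[0,d-1]}$. For full faithfulness I would verify that, for $X,Y\in M^{[0,d-1]}$, every roof $X\xrightarrow{f} Y'\xleftarrow{s} Y$ with $\mathrm{Cone}(s)\in\mathcal{D}$ is equivalent to an honest morphism $X\to Y$ in $\mathcal{C}$, using the vanishing $\mathrm{Hom}_\mathcal{C}(X,\mathcal{D}[k])=0$ for the relevant $k$ that follows from combining silting with CY duality on the narrow degree window occupied by $X$; and, dually, that any morphism in $\mathcal{C}$ which dies in $\mathcal{C}/\mathcal{D}$ already vanishes in $\mathcal{C}$.

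With (2) in hand, I would deduce (1) by transporting the triangulated structure along the additive equivalence. The shift is defined by: given $X\in M^{[0,d-1]}$, the object $X[1]\in M^{[1,d]}$ has a unique representative $\Sigma X$ in $M^{[0,d-1]}$ provided by essential surjectivity. The $d$-Calabi--Yau pairing would be induced by the relative $(d+1)$-pairing: under the equivalence, a morphism $X\to\Sigma^d Y$ corresponds to a morphism $X\to Y[d+1]$ in $\mathcal{C}$ modulo morphisms factoring through $\mathcal{D}$, and the relative pairing pairs these non-degenerately with $\mathrm{Hom}_\mathcal{C}(Y,X)$ modulo the same indeterminacy. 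For (3), any $X\in\mathcal{C}/\mathcal{D}$ satisfying $\mathrm{Hom}_{\mathcal{C}/\mathcal{D}}(M,X[p])=0$ for $1\leq p\leq d-1$, when viewed through the additive equivalence as an iterated extension in $M^{[0,d-1]}$, must collapse to an object of $\mathrm{add}(M)$, since the higher layers of the filtration are precisely detected by these Hom groups.

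The principal obstacle will be executing the Postnikov truncation in essential surjectivity and verifying non-degeneracy of the induced $d$-Calabi--Yau pairing. Both rest on the same technical core: translating silting-based vanishing in the t-structure into Calabi--Yau-based vanishing after a shift by $[d+1]$, and treating the inductive bookkeeping that identifies deeply-negative cohomology with an object of $\mathcal{D}$ with appropriate care, particularly when simultaneously quotienting by morphisms that factor through $\mathcal{D}$.
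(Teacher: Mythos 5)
First, note that the paper does not prove this theorem: it is imported verbatim from Iyama--Yang \cite[Theorem 5.8]{iya-yan18}, so there is no in-paper proof to compare against. Measured against the actual proof (Iyama--Yang, following Amiot for $d=2$), your overall architecture --- establish the additive equivalence with the fundamental domain $M^{[0,d-1]}$ first, get full faithfulness from Hom-window vanishing (this is exactly Proposition \ref{IY-prop}), induce the $d$-CY pairing from the relative $(d+1)$-CY pairing, and read off cluster-tilting from the equivalence --- is the standard and correct route.

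However, your mechanism for essential surjectivity fails. You propose to realize $\tau^{\leq 0}X$ as a Postnikov tower whose layers are the t-cohomology objects $H^{-i}(X)\in\mathcal{H}$ and to absorb the portion in degrees $\leq -d$ into $\mathcal{D}$. But for an ST-triple the heart satisfies $\mathcal{H}\subset\mathcal{D}$, and an object of $\mathcal{C}$ has bounded t-cohomology if and only if it lies in $\mathcal{D}$ (since $\mathcal{C}_M^{\geq -n}=\mathcal{C}_M^{\geq 0}[n]\subset\mathcal{D}$ and the t-structure is bounded on $\mathcal{D}$). So either the tower is infinite and the ``portion of degree $\leq -d$'' is not in $\mathcal{D}$ (e.g.\ $X=M$ itself: already for the Ginzburg algebra $\mathrm{Hom}^*_\mathcal{C}(M,M)$ is supported in infinitely many negative degrees), or the tower is finite and then $X\in\mathcal{D}$, i.e.\ $X\cong 0$ in the quotient. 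The correct decomposition is the \emph{co-t-structure} one: $\mathcal{C}^M_{\leq 0}=\bigcup_p M^{[0,p]}$ with layers in $\mathrm{add}(M)[i]$ (which are not in $\mathcal{D}$), and the relative CY duality is used to trim the window from $[0,p]$ down to $[0,d-1]$ modulo $\mathcal{D}$; the t-structure is used only for the initial truncation $X\mapsto\tau^{\leq 0}X$. Two smaller caveats: for full faithfulness one cannot invoke blanket vanishing $\mathrm{Hom}_\mathcal{C}(X,\mathcal{D}[k])=0$ (indeed $\mathrm{Hom}_\mathcal{C}(M,S_j)\cong\mathbb{K}$); the argument must first truncate $D\in\mathcal{D}$ via the bounded t-structure on $\mathcal{D}$ and control each piece separately. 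And for (1) there is nothing to ``transport'': $\mathcal{C}/\mathcal{D}$ is already triangulated as a Verdier quotient; the genuine content is Hom-finiteness and the non-degeneracy of the induced pairing, which you assert but which is the most delicate step of Amiot's and Iyama--Yang's proofs.
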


In this case, we will call the quotient category $\mathcal{C}/\mathcal{D}$ the {\em cluster category}.

\begin{rmk}
The $d$-Calabi--Yau pairing on $\mathcal{C}/\mathcal{D}$ can be described explicitly using the relative $(d+1)$-Calabi--Yau pairing on $(\mathcal{C},\mathcal{D})$.
For the precise construction, see \cite[Section 1]{ami09}.
\end{rmk}

For an object $M \in \mathcal{C}$, define
\begin{equation*}
\mathcal{C}^M_{\geq 0} \coloneqq \bigcup_{p \geq 0} M^{[-p,0]}, \quad \mathcal{C}^M_{\leq 0} \coloneqq \bigcup_{p \leq 0} M^{[0,p]}.
\end{equation*}

\begin{prop}[{\cite[Proposition 5.9]{iya-yan18}}]\label{IY-prop}
Let $(\mathcal{C},\mathcal{D},M)$ be a $(d+1)$-Calabi--Yau triple.
Then the quotient functor $\mathcal{C} \to \mathcal{C}/\mathcal{D}$ induces an isomorphism
\begin{equation*}
\mathrm{Hom}_\mathcal{C}(X,Y) \cong \mathrm{Hom}_{\mathcal{C}/\mathcal{D}}(X,Y)
\end{equation*}
for every $X \in \mathcal{C}^M_{\leq 0}$ and $Y \in \mathcal{C}^M_{\geq 1-d}$.
\end{prop}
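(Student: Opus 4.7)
The plan is to prove both injectivity and surjectivity of the natural map $\mathrm{Hom}_\mathcal{C}(X,Y) \to \mathrm{Hom}_{\mathcal{C}/\mathcal{D}}(X,Y)$ via a layer-by-layer $\mathrm{Hom}$-vanishing argument driven by the bounded $t$-structure on $\mathcal{D}$ and the relative $(d+1)$-Calabi--Yau pairing. By Proposition \ref{ST-prop}, the heart $\mathcal{H}$ of the restricted bounded $t$-structure on $\mathcal{D}$ is equivalent to $\mathrm{mod}_{\mathrm{End}_\mathcal{C}(M)}$; since $\mathrm{End}_\mathcal{C}(M)$ is finite dimensional, $\mathcal{H}$ is split-generated by finitely many simple objects $S_1, \ldots, S_n$. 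Because each $S_j \in \mathcal{H} \subset \mathcal{C}_M^{\leq 0} \cap \mathcal{C}_M^{\geq 0}$, we get $\mathrm{Hom}_\mathcal{C}(M, S_j[p]) = 0$ for $p \neq 0$, and applying the relative $(d+1)$-Calabi--Yau duality at $S_j \in \mathcal{D}$ yields the dual vanishing $\mathrm{Hom}_\mathcal{C}(S_j[k], M[c]) = 0$ unless $c - k = d + 1$.

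Using the cohomological nature of $\mathrm{Hom}_\mathcal{C}(-,-)$ and $\mathrm{Hom}_{\mathcal{C}/\mathcal{D}}(-,-)$ in each variable together with the five-lemma, I would reduce to the base case $X = M[a]$ with $a \geq 0$ and $Y = M[b]$ with $b \leq d-1$. The above computation then gives the crucial \emph{alternating vanishing}: for $X = M[a]$, $\mathrm{Hom}_\mathcal{C}(X, S_j[k]) = 0$ whenever $k < 0$ (since we would need $k = a \geq 0$); and for $Y = M[b]$, $\mathrm{Hom}_\mathcal{C}(S_j[k], Y) = 0$ whenever $k \geq -1$ (since we would need $k = b - d - 1 \leq -2$). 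Thus for every integer $k \in \mathbb{Z}$, at least one of the two sides vanishes.

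For injectivity, the kernel of $\mathrm{Hom}_\mathcal{C}(X,Y) \to \mathrm{Hom}_{\mathcal{C}/\mathcal{D}}(X,Y)$ consists of morphisms factoring as $X \to D \to Y$ for some $D \in \mathcal{D}$. Inducting on the finite length of the cohomological filtration of $D$ with respect to the bounded $t$-structure and using the long exact sequences of $\mathrm{Hom}$ obtained from the truncation triangles, the alternating vanishing forces every such composition to be zero. For surjectivity, any morphism in $\mathcal{C}/\mathcal{D}$ is represented by a roof $s^{-1} \circ f$ with $\mathrm{Cone}(s) = D \in \mathcal{D}$; the obstruction to lifting $f$ along $s$ in $\mathcal{C}$ lies in $\mathrm{Hom}_\mathcal{C}(X, D)$, and can be killed layer by layer by further refining $s$ (post-composing with an additional map whose cone is in $\mathcal{D}$), again using the alternating vanishing. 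The main obstacle is the bookkeeping in these induction and refinement steps --- in particular, tracking how the alternating vanishings propagate through multi-step truncations of $D$, and, for surjectivity, coherently choosing refinements --- but no conceptually new input is needed beyond what has been set up above.
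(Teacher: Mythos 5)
Your overall skeleton is the right one (the paper itself gives no proof here --- it cites \cite[Proposition 5.9]{iya-yan18} --- and the standard argument is exactly a d\'evissage over the bounded t-structure on $\mathcal{D}$ driven by the two Hom-concentration statements you derive). Your two vanishings are correct: $\mathrm{Hom}_\mathcal{C}(X,S_j[k])=0$ for $k<0$ when $X\in\mathcal{C}^M_{\leq 0}$, and $\mathrm{Hom}_\mathcal{C}(S_j[k],Y)=0$ for $k\geq -1$ when $Y\in\mathcal{C}^M_{\geq 1-d}$, the latter via the relative $(d+1)$-Calabi--Yau pairing. Note, though, that these hold for \emph{all} such $X,Y$ directly, since membership in $\mathcal{C}^M_{\leq 0}$ and $\mathcal{C}^M_{\geq 1-d}$ is an extension-closed condition; so the five-lemma reduction to $X=M[a]$, $Y=M[b]$ is unnecessary (and if you insist on it, the boundary terms of the long exact sequences leave the window and need the silting vanishing $\mathrm{Hom}_\mathcal{C}(M,M[p])=0$, $p>0$, to be handled). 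The d\'evissage should instead run over the object $D\in\mathcal{D}$: your injectivity argument does exactly this and is fine --- truncate $D$ as $\tau^{\leq 0}D\to D\to\tau^{\geq 1}D$ (so $\tau^{\geq 1}D$ is built from $S_j[k]$ with $k\leq -1$ and $\tau^{\leq 0}D$ from $S_j[k]$ with $k\geq 0$); then $X\to D$ factors through $\tau^{\leq 0}D$ and $\mathrm{Hom}_\mathcal{C}(\tau^{\leq 0}D,Y)=0$.

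The genuine gap is in surjectivity. Taken literally, ``the obstruction in $\mathrm{Hom}_\mathcal{C}(X,D)$ can be killed layer by layer by further refining $s$'' does not go through: the vanishing $\mathrm{Hom}_\mathcal{C}(X,\tau^{\geq 1}D)=0$ lets you replace the roof $s^{-1}\circ f$ by an equivalent roof whose cone is $D'=\tau^{\leq 0}D$, but the remaining obstruction lies in $\mathrm{Hom}_\mathcal{C}(X,D')$, which is precisely the range where your vanishing says nothing, and no further refinement will make it vanish. The correct final step is different in nature: since $D'$ is built from $S_j[k]$ with $k\geq 0$, your dual vanishing gives $\mathrm{Hom}_\mathcal{C}(D'[-1],Y)\cong\mathrm{Hom}_\mathcal{C}(D',Y[1])=0$, so the connecting morphism of the new triangle $Y\to Y''\to D'\to Y[1]$ vanishes, the triangle splits, $Y''\cong Y\oplus D'$, the map $Y\to Y''$ becomes invertible in $\mathcal{C}/\mathcal{D}$ with inverse the projection, and the roof equals the class of the $Y$-component of the lifted morphism $X\to Y''$. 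In other words the obstruction class is bypassed via a splitting, not killed by refinement. This is the one non-bookkeeping idea you deferred to ``coherently choosing refinements''; with it inserted, your sketch becomes the standard Iyama--Yang proof.
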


Theorem \ref{IY-thm} and Proposition \ref{IY-prop} allow us to describe $\mathrm{Hom}^*_{\mathcal{C}/\mathcal{D}}(M,M)$ in terms of $\mathrm{Hom}_\mathcal{C}^*(M,M)$.

\begin{cor} \label{cor:hom}
Let $(\mathcal{C},\mathcal{D},M)$ be a $(d+1)$-Calabi--Yau triple.
Then
\begin{equation*}
\mathrm{Hom}_{\mathcal{C}/\mathcal{D}}(M,M[p]) \cong
\begin{cases}
\mathrm{Hom}_\mathcal{C}(M,M[p]) & (p \leq 0),\\
0 & (1 \leq p \leq d-1),\\
D\mathrm{Hom}_\mathcal{C}(M,M[d-p]) & (p \geq d).
\end{cases}
\end{equation*}
\end{cor}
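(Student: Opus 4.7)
The plan is to split into the three ranges of $p$ and apply the two preceding results. The main observation is that Proposition \ref{IY-prop} produces an isomorphism $\mathrm{Hom}_\mathcal{C}(M,M[p]) \cong \mathrm{Hom}_{\mathcal{C}/\mathcal{D}}(M,M[p])$ as long as $M \in \mathcal{C}^M_{\leq 0}$ and $M[p] \in \mathcal{C}^M_{\geq 1-d}$, while Theorem \ref{IY-thm}(1) supplies a $d$-Calabi--Yau duality that converts the remaining range $p \geq d$ back into the range $d-p \leq 0$.

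First I would unwind the definitions of $\mathcal{C}^M_{\leq 0}$ and $\mathcal{C}^M_{\geq 1-d}$ and record that $M \in \mathcal{C}^M_{\leq 0}$ and that $M[p] \in \mathcal{C}^M_{\geq 1-d}$ whenever $p \leq d-1$, since shifts of $M$ by integers $\leq d-1$ lie in the relevant aisles built from $\mathrm{add}(M)$. Proposition \ref{IY-prop} then gives
\begin{equation*}
\mathrm{Hom}_{\mathcal{C}/\mathcal{D}}(M,M[p]) \cong \mathrm{Hom}_\mathcal{C}(M,M[p]) \qquad \text{for all } p \leq d-1.
\end{equation*}
This immediately yields the first case ($p \leq 0$); for $1 \leq p \leq d-1$ the right-hand side vanishes because $M$ is silting in $\mathcal{C}$, giving the middle case. (Alternatively, one can appeal directly to Theorem \ref{IY-thm}(3), which asserts that $M$ is a $d$-cluster-tilting object of $\mathcal{C}/\mathcal{D}$; the vanishing in degrees $1,\dots,d-1$ is then part of the definition.)

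For $p \geq d$, I would invoke the $d$-Calabi--Yau structure on $\mathcal{C}/\mathcal{D}$ from Theorem \ref{IY-thm}(1) to obtain
\begin{equation*}
\mathrm{Hom}_{\mathcal{C}/\mathcal{D}}(M,M[p]) \cong D\mathrm{Hom}_{\mathcal{C}/\mathcal{D}}(M[p],M[d]) = D\mathrm{Hom}_{\mathcal{C}/\mathcal{D}}(M,M[d-p]).
\end{equation*}
Since $d-p \leq 0$, the already-proven first case identifies the inner hom with $\mathrm{Hom}_\mathcal{C}(M,M[d-p])$, giving the third case. The only real work is the bookkeeping in the first step---verifying the membership of the shifts $M[p]$ ($p \leq d-1$) in $\mathcal{C}^M_{\geq 1-d}$ directly from the definitions. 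Once this is granted, the corollary is a formal consequence of Proposition \ref{IY-prop} and Theorem \ref{IY-thm}.
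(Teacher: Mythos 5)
Your argument is correct and is essentially the paper's own proof: both use Proposition \ref{IY-prop} (after checking $M \in \mathcal{C}^M_{\leq 0}$ and $M[p] \in \mathcal{C}^M_{\geq 1-d}$ for $p \leq d-1$) to identify the quotient homs with those in $\mathcal{C}$ in degrees $\leq d-1$, the silting property of $M$ for the vanishing in degrees $1,\dots,d-1$, and the $d$-Calabi--Yau duality of $\mathcal{C}/\mathcal{D}$ from Theorem \ref{IY-thm}(1) to reduce the range $p \geq d$ to the already-handled range. The only cosmetic difference is your optional remark about invoking the cluster-tilting property for the middle case, which the paper does not use.
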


\begin{proof}
Note that $M \in \mathcal{C}^M_{\leq 0}$ and $M[p] \in \mathcal{C}^M_{\geq 1-d} (= \mathcal{C}^M_{\geq 0}[d-1])$ for every $p \leq d-1$.
Therefore, by Proposition \ref{IY-prop}, we have $\mathrm{Hom}_{\mathcal{C}/\mathcal{D}}(M,M[p]) \cong \mathrm{Hom}_\mathcal{C}(M,M[p])$ for every $p \leq d-1$.
In particular, $\mathrm{Hom}_{\mathcal{C}/\mathcal{D}}(M,M[p])=0$ for all $1 \leq p \leq d-1$ since $M$ is a silting object of $\mathcal{C}$.
On the other hand, for $p \geq d$, we see that
\begin{equation*}
\mathrm{Hom}_{\mathcal{C}/\mathcal{D}}(M,M[p]) \cong D\mathrm{Hom}_{\mathcal{C}/\mathcal{D}}(M,M[d-p]) \cong D\mathrm{Hom}_\mathcal{C}(M,M[d-p])
\end{equation*}
where the first isomorphism follows from Theorem \ref{IY-thm} (1) and the second isomorphism follows from Proposition \ref{IY-prop}.
\end{proof}

Our main lemma is the following.
In Section \ref{section:symplectic}, we will apply the following lemma to the derived wrapped Fukaya category $\mathcal{C} = D^\pi\mathcal{W}(M)$ and the derived compact Fukaya category $\mathcal{D} = D^\pi\mathcal{F}(M)$.

\begin{lem} \label{lem:main}
Let $\mathcal{C}$ be a split-closed algebraic triangulated category and $\mathcal{D} \subset \mathcal{C}$ be a thick subcategory.
Let $M \cong M_1 \oplus \cdots \oplus M_n \in \mathcal{C}$ be a basic silting object where each $M_i$ is indecomposable.
Suppose it satisfies the following conditions:
\begin{enumerate}
\item $\mathrm{Hom}_\mathcal{C}(M,M)$ is finite dimensional.
\item There is a collection $\{S_1,\dots,S_n\} \subset \mathcal{D}$ of objects which split-generates $\mathcal{D}$ such that
\begin{equation*}
\mathrm{Hom}_\mathcal{C}(M_i,S_j[p]) \cong
\begin{cases}
\mathbb{K} & (i=j \text{ and } p=0),\\
0 & (\text{otherwise}).
\end{cases}
\end{equation*}
\end{enumerate}
Then $(\mathcal{C},\mathcal{D},M)$ is an ST-triple.
If moreover $(\mathcal{C},\mathcal{D})$ is relative $(d+1)$-Calabi--Yau then $(\mathcal{C},\mathcal{D},M)$ is a $(d+1)$-Calabi--Yau triple.
\end{lem}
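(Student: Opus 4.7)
The plan is to reduce to Example~\ref{ST-ex} via a Keller-type Morita equivalence. Since $\mathcal{C}$ is a split-closed algebraic triangulated category with split-generator $M$, there is an equivalence $\Phi : \mathcal{C} \xrightarrow{\sim} D^\pi R$ sending $M$ to $R$, where $R = \mathrm{End}_\mathcal{C}^*(M)$ is the dg endomorphism algebra of $M$. The silting hypothesis gives $H^p(R)=0$ for $p>0$, and hypothesis~(1) gives that $H^0(R)=\mathrm{End}_\mathcal{C}(M)$ is finite dimensional; these are two of the three hypotheses of Example~\ref{ST-ex}. I plan to extract the third, $D_cR \subset D^\pi R$, from hypothesis~(2) by matching the $S_i$ to the simples of $H^0(R)$.

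The decomposition $M=\bigoplus_i M_i$ into indecomposables provides a complete set of primitive orthogonal idempotents $e_i \in H^0(R)$, and since $\mathbb{K}$ is algebraically closed each $\mathrm{End}_\mathcal{C}(M_i)$ is local with residue field $\mathbb{K}$, so the simple right $H^0(R)$-modules are exactly the one-dimensional modules $\mathbb{K}_1,\dots,\mathbb{K}_n$ supported at the $e_i$. Hypothesis~(2) then forces $\Phi(S_i) \simeq \mathbb{K}_i$: indeed $\Phi(S_i)=\mathrm{hom}_\mathcal{C}^*(M,S_i)$ is one-dimensional, concentrated in degree zero and supported at $e_i$, and on a one-dimensional module the Jacobson radical can only act as zero. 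In particular each $\mathbb{K}_i$ lies in $D^\pi R$. Since $H^0(R)$ is finite dimensional, any $N \in D_cR$ has bounded cohomology and each $H^p(N)$ is a finite-dimensional $H^0(R)$-module with a composition series by the $\mathbb{K}_i$; iterating the standard truncation triangles then exhibits $N$ as a finite extension of shifts of the $\mathbb{K}_i$, so $N$ lies in the thick closure of the simples and therefore in $D^\pi R$. Hence $D_cR \subset D^\pi R$, and Example~\ref{ST-ex} yields that $(D^\pi R, D_cR, R)$ is an ST-triple.

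To finish, $\mathcal{D}$ is by assumption the thick closure of $\{S_i\}$, so $\Phi(\mathcal{D})$ is the thick closure of $\{\mathbb{K}_i\}$ in $D^\pi R$, which coincides with $D_cR$ by the argument just given. Transporting the ST-triple structure back along $\Phi$ shows that $(\mathcal{C},\mathcal{D},M)$ is itself an ST-triple, and the Calabi--Yau enhancement is tautological: the relative $(d+1)$-Calabi--Yau hypothesis on $(\mathcal{C},\mathcal{D})$ is precisely the extra axiom~(4) in the definition of a $(d+1)$-Calabi--Yau triple. The main obstacle is the identification step: one must check both that the externally supplied $S_i$ correspond to the intrinsic simples of the silting heart and that those simples split-generate $D_cR$, so that the two candidate distinguished thick subcategories agree; once this dictionary is in place, the ST-triple axioms follow formally from Example~\ref{ST-ex}.
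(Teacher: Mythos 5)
Your proposal is correct and follows essentially the same route as the paper: pass to $R=\mathrm{hom}(M,M)$ via the equivalence $\mathcal{C}\simeq D^\pi R$, use hypothesis (2) to identify the $\Phi(S_i)$ with the complete set of simples of the heart $\mathrm{mod}_{H^0(R)}$, deduce $\Phi(\mathcal{D})=D_cR\subset D^\pi R$, and invoke Example \ref{ST-ex}. The only difference is that you spell out the split-generation of $D_cR$ by the simples (via boundedness, composition series and truncation triangles) where the paper cites it, which is a fine filling-in of detail rather than a different argument.
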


\begin{proof}
Since $\mathcal{C}$ is split-closed and algebraic, we can assume without loss of generality that $\mathcal{C} = D^\pi\mathcal{A}$ for some dg category $\mathcal{A}$.
Since $M$ is a silting object, we can also assume that $\mathrm{hom}_\mathcal{A}(M,M)$ is non-positively graded.
Now consider a dg algebra $R = \mathrm{hom}_\mathcal{A}(M,M)$.
Then the dg functor $\mathrm{hom}_\mathcal{A}(M,-) : \mathcal{A} \to \mathrm{Mod}_R$ induces an exact functor $\Phi : D\mathcal{A} \to DR$ which restricts to an exact equivalence
\begin{equation*}
\mathcal{C} = D^\pi\mathcal{A} \overset{\sim}{\to} D^\pi R.
\end{equation*}

Note that the heart
\begin{equation*}
\mathcal{H} \coloneqq (D^\pi R)_R^{\leq 0} \cap (D^\pi R)_R^{\geq 0} = \{ X \in D^\pi R \,|\, H^p(X)=0 \text{ for all } p \neq 0 \}
\end{equation*}
of the standard t-structure on $D^\pi R$ is equivalent to $\mathrm{mod}_{H^0(R)}$ (see \cite[Proposition 2.1]{kal-yan16}).
By the condition (2), we see that the image of $\{S_1,\dots,S_n\}$ under $\Phi$ is contained in $\mathcal{H}$ and gives a complete set of pairwise non-isomorphic simple objects of $\mathcal{H}$.
Since $\{S_1,\dots,S_n\}$ is assumed to split-generate $\mathcal{D}$ and a complete set of pairwise non-isomorphic simple objects of $\mathcal{H}$ split-generates $D_cR$, this shows that $\Phi$ restricts to an exact equivalence
\begin{equation*}
\mathcal{D} \overset{\sim}{\to} D_cR
\end{equation*}
and thus $D_cR \subset D^\pi R$.

Consequently, since $\Phi$ sends $M$ to $R$, we see that the conditions (1), (2) and (3) in Example \ref{ST-ex} are satisfied for $R$ by the assumption on $M$ and the above argument.
Therefore we conclude that $(D^\pi R,D_cR,R)$ is an ST-triple and so is $(\mathcal{C},\mathcal{D},M)$.
\end{proof}

	\section{Examples from symplectic geometry}\label{section:symplectic}
	
	Let us briefly review the construction of the wrapped Fukaya category of a Liouville domain $M$ to fix some notations and conventions. We refer the readers to \cite{abo-sei10,abo12,gan13} for more detail.
	

	Let $(M,\omega=d\lambda)$ be a Liouville domain of dimension $2(d+1)$, which means that $(M,\omega)$ is a symplectic manifold with a boundary and its boundary $\partial M$ is a contact manifold of dimension $2d+1$ with the contact form $\lambda|_{\partial M}$. This forces $M$ to have a cylindrical end near its boundary, which is isomorphic to the symplectization $(0,1] \times \partial M$ of the boundary $\partial M$ symplectically. We will denote such a Liouville domain just by $M$ throughout the paper.
	
	We consider the completion of $M$
	$$ \widehat{M} = M \cup_{\partial M} ([1,\infty) \times \partial M)$$
	obtained from $M$ by gluing the symplectization $[1,\infty) \times \partial M$ with the identification $\partial M = \{1 \} \times \partial M$. 
	
	The {\em wrapped Fukaya category} $\mathcal{W}(M)$ of $M$ is an $A_{\infty}$-category whose objects are exact Lagrangian submanifolds $L$ of $M$ satisfying the following:
	\begin{itemize}
		\item $L$ intersects the boundary  $\partial M$ transversely, and
		\item the restriction $\lambda|_L$ vanishes near the boundary $\partial L = L \cap \partial M$.
	\end{itemize}
	We will call such Lagrangian submanifolds {\em admissible}. These requirements force any admissible Lagrangian to have a cylindrical Legendrian end near the boundary of $M$. The {\em compact Fukaya category} $\mathcal{F}(M)$ is the full subcategory of $\mathcal{W}(M)$ consisting of exact closed Lagrangian submanifolds of $M$. We will denote $\mathcal{W}(M)$ by $\mathcal{W}$ and denote $\mathcal{F}(M)$ by $\mathcal{F}$ respectively, when the Liouville domain $M$ is clear from the context.
	
	To be more precise, Lagrangian submanifolds are required to admit some additional data to be allowed as an object of $\mathcal{W}(M)$. First, in order to endow the wrapped Fukaya category $\mathcal{W}(M)$ with a $\mathbb{Z}$-grading, one needs to further assume
	$$ 2c_1(M) =0 \in H^2(M ,\mathbb{Z})$$
	and also Lagrangian submanifolds are required to be graded in the sense of \cite{sei08}. Furthermore, in order to work over a field of characteristic zero, one needs to choose a background class $b\in H^2(M,\mathbb{Z}_2)$ and require the second Stiefel--Whitney classes of the tangent bundles of Lagrangians to coincide with the restriction of $b$ \cite{abo12}. We will denote the corresponding wrapped Fukaya category and compact Fukaya category by $\mathcal{W}_b(M)$ and $\mathcal{F}_b(M)$, respectively and assume the background class $b$ to be zero unless otherwise specified.

	As we constructed the completion $\widehat{M}$ from $M$ above, for a Lagrangian submanifold $L$ with a cylindrical Legendrian end, one can consider its completion
	$$ \widehat{L} = L \cup_{\partial L} ([1,\infty) \times \partial L) \subset \widehat{M}.$$
	
	Roughly speaking, the morphism space $\hom_{\mathcal{W}} (L_0, L_1)$ between two admissible Lagrangians $L_0$ and $L_1$ of $M$ is defined by the graded $\mathbb{K}$-vector space generated by non-degenerate Hamiltonian chords from $\widehat{L}_0$ to $\widehat{L}_1$ for a Hamiltonian on $\widehat{M}$, which is quadratic in the radial coordinate of the symplectization $(0,\infty)\times \partial M$. 

	
	For every integer $k\geq 1$ and any collection of $(k+1)$ Lagrangian submanifolds $L_0, \dots ,L_k$ of $M$, the $k$-th $A_{\infty}$-product 
	$$\mu^k : \hom_{\mathcal{W}} (L_{k-1}, L_k) \otimes \dots \otimes \hom_{\mathcal{W}} (L_0, L_1) \to \hom_{\mathcal{W}} (L_0, L_k)[2-k]$$
	is defined by counting rigid pseudo-holomorphic disks with $(k+1)$ boundary punctures $\{z_i\}_{i=0}^k$, which  is asymptotic to a Hamiltonian chord from $\widehat{L}_0$ to $\widehat{L}_k$ at $z_0$, is asymptotic to a Hamiltonian chord from $\widehat{L}_{i-1}$ to $\widehat{L}_{i}$ at $z_i$ for $ 1\leq i \leq k$ and maps to $\widehat{L}_i$ on the boundary component between $z_{i}$ to $z_{i+1}$.\\
	
	

	
	A Liouville domain is said to be {\em non-degenerate} if it admits a set of Lagrangians satisfying the Abouzaid's generating criterion \cite{abo10}. For example, every Weinstein manifold is a non-degenerate Liouville manifold. If $M$ is a non-degenerate Liouville domain of dimension $2(d+1)$, then $\mathcal{W}(M)$ is a smooth and $(d+1)$-Calabi--Yau $A_\infty$-category \cite{gan13}.\\

	Let us now assume that the derived wrapped Fukaya category $D^{\pi} \W(M)$ has a silting object
	\begin{equation*}
		L = \bigoplus_{i=1}^n L_i
	\end{equation*}
	for some connected Lagrangians $L_i$ of $M$, whose image under the Yoneda embedding is indecomposable as an object of $D^{\pi} \W(M)$.
	Let us further assume that the derived compact Fukaya category $D^{\pi} \F(M)$ is split-generated by
	$$ S = \bigoplus_{i=1}^n S_i$$
	for some compact Lagrangians $S_i$ of $M$.

	Then the following proposition is a consequence of Lemma \ref{lem:main}.
	\begin{prop}\label{prop:cytriple}
		If the following conditions hold:
		\begin{enumerate}
			\item $\Hom_{D^{\pi} \W(M)} (L,L)$ is finite dimensional.
			\item $\Hom_{D^{\pi} \W(M)} (L_i, S_j[p]) = \begin{cases}\mathbb{K} & (i = j \text{ and } p=0),\\0 & (\text{otherwise}), \end{cases}$
		\end{enumerate}
		then $(D^{\pi} \W(M), D^{\pi} \F(M), L)$ is a $(d+1)$-Calabi--Yau triple.

	\end{prop}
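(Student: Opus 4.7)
The plan is to apply Lemma \ref{lem:main} directly, taking $\mathcal{C} = D^\pi\W(M)$, $\mathcal{D} = D^\pi\F(M)$ and the silting object to be $L = L_1 \oplus \cdots \oplus L_n$, and then to upgrade the resulting ST-triple to a Calabi--Yau triple by invoking Corollary \ref{CY-cor}. First I would check that the hypotheses of Lemma \ref{lem:main} hold in this setting: $D^\pi\W(M)$ is split-closed by construction and algebraic because the wrapped Fukaya $A_\infty$-category admits a canonical dg enhancement (for instance, as the image of the Yoneda embedding into $\mathrm{Mod}_{\W(M)}$); the object $L$ is basic silting by assumption together with the indecomposability of the Yoneda image of each $L_i$; and the collection $\{S_1,\dots,S_n\}$ split-generates $D^\pi\F(M)$ by hypothesis. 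Conditions (1) and (2) of Lemma \ref{lem:main} are then literally the conditions (1) and (2) of the proposition, so the lemma produces an ST-triple $(D^\pi\W(M), D^\pi\F(M), L)$.

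It remains to establish the relative $(d+1)$-Calabi--Yau property of the pair $(D^\pi\W(M), D^\pi\F(M))$. Since $M$ is assumed to be non-degenerate, the paragraph preceding the statement tells us that $\W(M)$ is a smooth $(d+1)$-Calabi--Yau $A_\infty$-category. Corollary \ref{CY-cor} then gives that $(D^\pi\W(M), D_c\W(M))$ is relative $(d+1)$-Calabi--Yau. So the proof reduces to identifying the derived compact Fukaya category $D^\pi\F(M)$ with $D_c\W(M)$ inside $D^\pi\W(M)$.

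To make this identification, I would re-run the setup from the proof of Lemma \ref{lem:main}: the dg functor $\mathrm{hom}_{\W(M)}(L,-)$ induces an exact equivalence $\Phi : D^\pi\W(M) \xrightarrow{\sim} D^\pi R$ with $R = \mathrm{hom}_{\W(M)}(L,L)$. That proof already shows $\Phi$ restricts to an equivalence $D^\pi\F(M) \xrightarrow{\sim} D_c R$. On the other hand, since $L$ split-generates $D^\pi\W(M)$, an object $X \in D^\pi\W(M)$ is proper over $\W(M)$ if and only if $\Phi(X) = \mathrm{hom}(L,X)$ has finite-dimensional total cohomology, i.e.\ if and only if $\Phi(X) \in D_c R$; equivalently, $\Phi$ restricts to an equivalence $D_c\W(M) \xrightarrow{\sim} D_c R$. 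Composing the two, we obtain $D_c\W(M) \simeq D^\pi\F(M)$ as subcategories of $D^\pi\W(M)$, and therefore $(D^\pi\W(M), D^\pi\F(M))$ inherits the relative $(d+1)$-Calabi--Yau structure. Combined with the ST-triple from the first step, condition (4) of the definition is satisfied and $(D^\pi\W(M), D^\pi\F(M), L)$ is a $(d+1)$-Calabi--Yau triple.

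The main obstacle is the last identification $D_c\W(M) \simeq D^\pi\F(M)$: everything else is a matter of unpacking definitions and citing Lemma \ref{lem:main} and Corollary \ref{CY-cor}, but this step genuinely requires that properness transport cleanly along the Morita equivalence $\Phi$, which in turn hinges on $L$ being a split-generator of $D^\pi\W(M)$. Once this transport is established, the two conclusions glue together to finish the proof.
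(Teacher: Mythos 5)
Your proposal is correct and follows essentially the route the paper intends: the paper states the proposition as a direct consequence of Lemma \ref{lem:main}, and the extra step you supply (identifying $D^\pi\F(M)$ with $D_c\W(M)$ via the Morita equivalence $\Phi$ induced by the split-generator $L$, then invoking Corollary \ref{CY-cor} for the smooth $(d+1)$-Calabi--Yau category $\W(M)$) is exactly the identification the paper flags in its earlier remark as being established by this proposition. Your fleshing-out of the transport of properness along $\Phi$ is sound, since silting objects are by definition split-generators here.
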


	\subsection{Cotangent bundle of a simply-connected manifold}\label{subsection:cotangent}
	In this subsection, we show that cotangent bundles give examples of Proposition \ref{prop:cytriple}.
	
	Indeed, let $N$ be a simply-connected closed smooth manifold of dimension $d+1$. On the one hand, we consider the wrapped Fukaya category $\W_b(T^*N)$ of the cotangent bundle $T^*N$ with the background class $b\in H^2(T^*N,\mathbb{Z}_2)$ given by the pullback of the second Stiefel--Whitney class of $N$. On the other hand, let us consider the based loop space $\Omega_x N$ for some $x\in N$. Then the concatenation of paths defines a continuous map
	$$\Omega_x N \times \Omega_x N \to \Omega_x N$$
	and it in turn gives rise to a ring structure on the homology $H_{*}(\Omega_x N)$. Furthermore, in \cite{abo12}, it was argued that the cubical chain complex of the based loop space $C_{-*} (\Omega_x N)$ carries a structure of dg algebra. Indeed its differential is given by the boundary operator and the product is induced by the concatenation of paths again.
	
	Let us consider a Lagrangian
	\begin{equation}\label{eq:siltingcotangent}
		L = T_x^* N \subset T^*N,
	\end{equation}
	the cotangent fiber of $T^*N$ fibered at the point $x \in N$. The following theorems are well-known.
	\begin{thm}[\cite{abo11}]\label{thm:A}
		The cotangent fiber $L=T^*_x N$ generates the wrapped Fukaya category $\W_b(T^*N)$.
	\end{thm}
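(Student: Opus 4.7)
The plan is to invoke Abouzaid's generation criterion (which asserts that an object $L$ split-generates $\mathcal{W}_b(T^*N)$ provided the open-closed map $\mathcal{OC} : HH_*(CW^*(L,L)) \to SH^*(T^*N)$ hits the unit $1 \in SH^0(T^*N)$) and to verify this criterion by a chain of identifications with the topology of the based and free loop spaces of $N$.

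First I would identify the wrapped endomorphism algebra with chains on the based loop space. Namely, following Abbondandolo--Schwarz (and its $A_\infty$-refinement by Abouzaid), one proves a quasi-isomorphism of $A_\infty$-algebras
\begin{equation*}
CW^*(L,L) \simeq C_{-*}(\Omega_x N),
\end{equation*}
constructed via a comparison between Floer trajectories emanating from $T^*_x N$ and Morse trajectories of an action functional on the path space $\mathcal{P}_{x,x} N = \Omega_x N$. The multiplicative structure on the left, coming from the pair-of-pants product on Floer cochains, matches the Pontryagin concatenation product on the right.

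Next I would pass to Hochschild homology and symplectic cohomology. By Goodwillie's theorem, $HH_*(C_{-*}(\Omega_x N)) \cong H_*(\mathcal{L}N)$ where $\mathcal{L}N$ is the free loop space; by the Viterbo/Abbondandolo--Schwarz isomorphism, $SH^*(T^*N) \cong H_{-*}(\mathcal{L}N)$ as graded vector spaces (and in fact as BV-algebras). The simple connectivity of $N$ is used here to trivialize the local system of orientations / twisted coefficients that otherwise would appear in Viterbo's isomorphism; this is also where the background class $b$ chosen as the pullback of $w_2(N)$ enters, ensuring the sign conventions line up over $\mathbb{K}$.

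Then the remaining step is to show that under the two identifications above, the open-closed map $\mathcal{OC}$ agrees with the continuous map $H_*(\mathcal{L}N) \to H_*(\mathcal{L}N)$ induced by the identity — equivalently, that $\mathcal{OC}$ is an isomorphism — and in particular sends a specific cycle corresponding to the constant loop at $x$ to the unit in $SH^0(T^*N)$. Geometrically, one factors $\mathcal{OC}$ through a chain-level model: the moduli of holomorphic discs with one interior puncture and boundary on $L$ with cyclically ordered Hamiltonian chord insertions corresponds, after the Abbondandolo--Schwarz-type identification, to a map from the cyclic bar construction of $C_{-*}(\Omega_x N)$ to $C_*(\mathcal{L}N)$, which is classically a quasi-isomorphism whose image of the fundamental class of the point $\{x\} \subset \mathcal{L}N$ is the unit. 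Applying the generation criterion then yields that $L$ split-generates $\mathcal{W}_b(T^*N)$.

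The main obstacle in this program is the last step: establishing the geometric compatibility between $\mathcal{OC}$ and the Goodwillie/Viterbo identifications at the chain level, i.e., constructing a coherent system of moduli spaces (and Morse-Floer comparison maps) that simultaneously realizes the Hochschild cycle built from the cotangent fiber and its image in symplectic cohomology as the constant loop class. This is precisely the technical core of \cite{abo11}, and everything else in the argument is either a formal consequence of the generation criterion or a translation between Floer-theoretic and string-topological models.
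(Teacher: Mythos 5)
This statement is not proved in the paper at all---it is imported verbatim from the cited reference---so the only meaningful comparison is with Abouzaid's own argument there, and your route is genuinely different from it. You propose to verify the split-generation criterion of \cite{abo10} by composing three identifications ($CW^*(L,L)\simeq C_{-*}(\Omega_xN)$, Goodwillie's $HH_*(C_{-*}(\Omega_xN))\cong H_*(\mathcal{L}N)$, and the Viterbo isomorphism $SH^*(T^*N)\cong H_{(d+1)-*}(\mathcal{L}N)$) and then proving that $\mathcal{OC}$ is compatible with them. The cited proof does not pass through $SH^*(T^*N)$, the Viterbo isomorphism, or the split-generation criterion at all: it works directly with the based-loop-space model of the endomorphism algebra of the fibre and the induced module structure on $CW^*(L,L')$ for an arbitrary object $L'$, and produces $L'$ from the fibre by twisted complexes. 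This difference matters in two ways. First, even if completed, your argument only yields \emph{split}-generation, which is strictly weaker than the generation statement being quoted (though it is all that the present paper actually uses: Lemma \ref{lem:main} and Proposition \ref{prop:cytriple} only require a silting object of the split-closed category $D^\pi\mathcal{W}_b(T^*N)$). Second, the chain-level compatibility of $\mathcal{OC}$ with the Viterbo and Goodwillie identifications, which you correctly single out as the crux, is not contained in the reference you attribute it to; it was established in later work, so as written your plan outsources its hardest step to a result that the quoted citation does not supply.

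There is also a concrete slip in the one step you do make explicit: the class that must hit the unit is not the class of the constant loop at $x$. Under $SH^*(T^*N)\cong H_{(d+1)-*}(\mathcal{L}N)$ the unit $1\in SH^0$ corresponds to the image of the fundamental class $[N]$ under the inclusion of constant loops, a class in $H_{d+1}(\mathcal{L}N)$, whereas the point class of the constant loop at $x$ lies in $H_0(\mathcal{L}N)$ and corresponds to a degree $d+1$ element of $SH^*$. This is not fatal to the strategy---if you genuinely prove that $\mathcal{OC}$ becomes an isomorphism under these identifications, the unit is automatically in the image---but the specific Hochschild cycle you would need to exhibit is one mapping to $c_*[N]$, not to the point class, and producing such a cycle explicitly (or proving the isomorphism statement) is precisely the nontrivial geometric content that your outline defers.
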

	
	\begin{thm}[\cite{abb-sch10}, \cite{abo12}]\label{thm:ASA}
		There is an $A_{\infty}$-quasi-isomorphism
		$$ \hom_{\W_b(T^*N)} (L, L) \cong C_{-*}(\Omega_x N).$$
		In particular, there is an isomorphism of graded $\mathbb{K}$-algebra
		$$ \Hom_{D^{\pi}\W_b(T^*N)}^*(L,L) \cong H_{-*} (\Omega_x N).$$
	\end{thm}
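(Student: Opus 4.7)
The plan is to follow the strategy of Abbondandolo--Schwarz and Abouzaid, constructing an explicit chain-level $A_\infty$-morphism between the two sides and then showing it is a quasi-isomorphism via an action/length filtration. First, I would set up the geometric dictionary between Hamiltonian chords and loops. Equip $\widehat{T^*N}$ with a Riemannian metric and a Hamiltonian $H$ that is (a perturbation of) $|p|^2/2$ outside a compact set, so that the Hamiltonian flow agrees with the cogeodesic flow and every time-one Hamiltonian chord with both endpoints on $\widehat{L} = T^*_x N$ projects to a geodesic loop at $x$. This puts the generators of $\hom_{\W_b(T^*N)}(L,L)$ in canonical bijection with a family of based loops at $x$ which is known to be dense in a suitable completion of $\Omega_x N$.

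Next, I would choose a convenient model for $C_{-*}(\Omega_x N)$ on which the concatenation product is strictly associative and dg---Abouzaid uses the cubical chain complex with Serre's cubical concatenation, while Abbondandolo--Schwarz use Morse chains for an energy functional on the Hilbert manifold of paths. Then I would construct the map
\begin{equation*}
\Phi : C_{-*}(\Omega_x N) \to \hom_{\W_b(T^*N)}(L,L),
\end{equation*}
together with higher components $\Phi^k$, by counting moduli spaces of pseudo-holomorphic half-discs with boundary on $\widehat{L}$, with $k$ boundary marked points constrained to lie over points of a given cubical chain in $\Omega_x N$ (via projection to $N$), and with prescribed output chord. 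The $A_\infty$-relations for $\Phi$ come from codimension-one boundaries of these moduli spaces: breaking of a disc contributes $\mu_{\W}^\ast$ applied to lower components, while splitting of a cube in the loop space contributes the concatenation product.

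Finally, to prove that $\Phi^1$ is a quasi-isomorphism on cohomology, I would filter both sides by action/length: for the Floer side by the symplectic action of the chord (which equals minus the energy of the projected geodesic up to a shift), and on the loop-space side by the Morse filtration coming from the energy functional. The induced map on associated graded pieces reduces, on each critical manifold of short geodesics, to a Morse-theoretic identification, and a standard spectral sequence argument upgrades this to an isomorphism of full cohomologies. The ring statement then follows from the fact that $\Phi$ intertwines $\mu_{\W}^2$ with cubical concatenation up to $A_\infty$-homotopy. The main obstacle is analytic: establishing compactness, transversality, and gluing for the moduli spaces of half-discs with loop-valued boundary constraints---in particular, controlling the behavior of the boundary evaluation map to $\Omega_x N$ so that the cubical pullbacks defining $\Phi^k$ are smooth and the count is well-defined---and verifying that the chain-level $A_\infty$-relations follow from a correct identification of the codimension-one boundary strata.
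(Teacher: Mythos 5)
The paper does not prove this statement; it is quoted verbatim from the literature (Abbondandolo--Schwarz and Abouzaid), so there is no in-paper argument to compare against. Judged against the actual proofs in those references, your outline captures the right global strategy: identify chords of a quadratic Hamiltonian with geodesic loops, work with a strictly associative chain model (cubical chains with Moore-loop concatenation, or a Morse complex of the energy functional), build an $A_\infty$-morphism from moduli of half-discs, read off the $A_\infty$-relations from codimension-one boundary strata, and upgrade to a quasi-isomorphism by an action/length filtration and spectral sequence comparison.

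The one place where your plan diverges from what is actually done, and where it would run into trouble as stated, is the definition of $\Phi^k$ by ``boundary marked points constrained to lie over points of a given cubical chain in $\Omega_x N$.'' A cubical chain in $\Omega_x N$ is a parametrized family of loops, so constraining finitely many boundary points of a disc via the projection to $N$ neither sees the loop structure nor cuts the moduli space down by the right amount; making such an incidence condition transverse against arbitrary singular or cubical chains in an infinite-dimensional space is exactly the difficulty both references avoid. Abouzaid instead constructs the $A_\infty$-morphism in the \emph{opposite} direction, $\mathrm{hom}_{\mathcal{W}_b(T^*N)}(L,L) \to C_{-*}(\Omega_x N)$, by evaluating the entire projected boundary arc of each half-disc as a chain of Moore loops (so the moduli space pushes forward to a cubical chain rather than being cut out by one), while Abbondandolo--Schwarz use a finite-dimensional Morse model and constrain the whole projected boundary path to lie in an unstable manifold of the energy functional. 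Either fix is available to you, and the filtration argument you describe for the quasi-isomorphism then goes through as in the references; you should also record that the background class $b$ must be the pullback of $w_2(N)$ for the signed counts to match when $N$ is not spin.
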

	
	As a consequence of the above theorem, considering that $N$ is simply-connected, we have
	\begin{enumerate}
		\item
		$\dim \Hom_{\W_b(T^*N)}^0 (L, L) =1$.
		\item
		$\Hom_{D^{\pi} \W_b(T^*N)}^* (L, L)$ is non-positively graded.
	\end{enumerate}
	It follows that $L$ is a silting object of $D^{\pi}\W_b(T^*N)$.
	
	Now let us consider a compact Lagrangian
	\begin{equation}\label{eq:compactgeneratorcotangent}
		S = \text{The zero section of the cotangent bundle } T^*N \subset T^*N.
	\end{equation}
	It clearly intersects the cotangent fiber $L$ exactly once at $x \in L\cap S$ and there are no other Hamiltonian chords between $L$ and $S$ for the quadratic Hamiltonian on $T^*N$ given by the square of the fiber norm with respect to a Riemannian metric on $N$. This means that the morphism space between $L$ and $S$ is one dimensional in $\W_b(T^*N)$. Furthermore we may grade the Lagrangian $S$ so that
	$$ \Hom^p_{D^{\pi}\W_b(T^*N)} (L,S) =\begin{cases} \mathbb{K} & (p=0),\\ 0& \text{(otherwise).}\end{cases}$$
	It was shown in \cite{fss08} that every connected closed Lagrangian submanifold of $T^*N$ is quasi-isomorphic to $S$ as an object of the compact Fukaya category $\F_b(T^*N)$. This implies that the zero section $S$ generates $\F_b(T^*N)$.
	
	In summary, we just showed that $L$ and $S$ satisfy the assumption of Proposition \ref{prop:cytriple}. Hence we have
	\begin{thm}\label{thm:cotangent}
		$(D^{\pi}\W_b(T^*N),D^{\pi}\F_b(T^*N), L)$ is a $(d+1)$-Calabi--Yau triple. 
	\end{thm}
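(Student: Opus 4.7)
The plan is to apply Proposition \ref{prop:cytriple} directly to the Liouville domain $M = T^*N$, and the main task is to confirm that each of its hypotheses follows from the material assembled in the preceding discussion. Since $T^*N$ admits a Weinstein structure, it is a non-degenerate Liouville manifold, so by \cite{gan13} the wrapped Fukaya category $\W_b(T^*N)$ is a smooth $(d+1)$-Calabi--Yau $A_\infty$-category; this supplies the Calabi--Yau input through which the ST-triple produced by Lemma \ref{lem:main} is upgraded to a $(d+1)$-Calabi--Yau triple via Corollary \ref{CY-cor}.

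Next I will verify that $L = T_x^*N$ is an indecomposable basic silting object with finite-dimensional degree-zero endomorphism algebra. By Theorem \ref{thm:A}, $L$ split-generates $\W_b(T^*N)$ and hence its derived split-closure. Theorem \ref{thm:ASA} identifies the graded endomorphism algebra $\Hom^*_{D^\pi\W_b(T^*N)}(L,L)$ with $H_{-*}(\Omega_x N)$, and because $N$ is simply-connected, $\Omega_x N$ is path-connected; this yields $\Hom^0_{D^\pi\W_b(T^*N)}(L,L) \cong H_0(\Omega_x N) \cong \mathbb{K}$, while $\Hom^p_{D^\pi\W_b(T^*N)}(L,L) = 0$ for every $p > 0$ because negative-degree homology vanishes. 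Thus $L$ is silting, condition (1) of Proposition \ref{prop:cytriple} holds, and $\End^0(L) \cong \mathbb{K}$ being a field forces $L$ to be indecomposable (and basic trivially, as it is a single summand).

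Finally, I will address the orthogonality against the zero section $S$. By \cite{fss08}, every closed exact Lagrangian in $T^*N$ is quasi-isomorphic to $S$ in $\F_b(T^*N)$, so $S$ split-generates $\F_b(T^*N)$. The cotangent fiber $L$ meets the zero section $S$ transversely at the single point $x$, and for the standard quadratic Hamiltonian obtained from the fiber norm squared of a Riemannian metric there are no further Hamiltonian chords from $\widehat L$ to $\widehat S$; after grading $S$ appropriately this yields
\begin{equation*}
\Hom^p_{D^\pi\W_b(T^*N)}(L, S) \cong \begin{cases} \mathbb{K} & (p=0), \\ 0 & (\text{otherwise}), \end{cases}
\end{equation*}
which is exactly condition (2). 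With all hypotheses verified, Proposition \ref{prop:cytriple} delivers the theorem. The argument is essentially modular, so no single step presents a serious obstacle; the most delicate point is the use of simple connectedness, which is what forces $\End^0(L)$ to be one-dimensional and thereby simultaneously secures finite dimensionality and indecomposability of $L$.
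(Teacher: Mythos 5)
Your proposal is correct and follows essentially the same route as the paper: verify the hypotheses of Proposition \ref{prop:cytriple} for $M=T^*N$ using Theorem \ref{thm:A} (generation by the cotangent fiber), Theorem \ref{thm:ASA} together with simple connectedness of $N$ (to get $\mathrm{Hom}^0(L,L)\cong\mathbb{K}$ and non-positive grading, hence siltingness, finite dimensionality and indecomposability), the single transverse intersection point of $L$ with the zero section $S$ for condition (2), and the result of \cite{fss08} for generation of $\F_b(T^*N)$ by $S$. The only cosmetic difference is that you invoke Corollary \ref{CY-cor} explicitly to supply the relative Calabi--Yau property, whereas the paper leaves this absorbed into the statement of Proposition \ref{prop:cytriple} under the standing non-degeneracy assumption on $M$.
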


	\subsection{Plumbings of cotangent bundles $T^* S^{d+1}$}\label{subsection:plumbing}
	Generalizing the case explained in Subsection \ref{subsection:cotangent}, we prove that Proposition \ref{prop:cytriple} continues to hold for the plumbing of cotangent bundles of spheres along a tree.
	
	For that purpose, first recall that a quiver $Q$ is a directed graph, which can be defined formally as follows.
	\begin{dfn}
		A {\em quiver} $Q$ consists of a set $Q_0$ of vertices, a set $Q_1$ of arrows and a pair of maps $s$ and $t$ from $Q_1$ to $Q_0$, called a source map and a target map respectively. 
		For any $\alpha \in Q_1$ with $s(\alpha) = i \in Q_0$ and $t(\alpha) = j \in Q_0$, we also denote it by
		$$ \alpha : i \to j.$$
		
		A quiver $Q$ is said to be finite if both $Q_0$ and $Q_1$ are finite.
	\end{dfn}
	
	Let $Q$ be a finite quiver whose underlying (undirected) graph is a tree.
	For an integer $d \geq 2$, let $X^{d+1}_Q$ denote the plumbing of copies of the cotangent bundles $T^* S^{d+1}$ along the underlying tree of $Q$. Refer to \cite{etg-lek17} for a construction of such a space.
	

	For each $i \in Q_0$, let $S_i$ denote the zero section of the cotangent bundle $T^* S^{d+1}$ corresponding to the vertex $i$. This means that, for each arrow $\alpha :i \to j$, there exists a unique intersection point $p_{\alpha} \in S_i \cap S_j$ where the plumbing is performed. Furthermore, for each $i\in Q_0$, let $L_i$ denote the cocore disk that intersects the zero section $S_i$ transversely at exactly one point $q_i \in L_i \cap S_i$ and does not intersect any other spheres $S_j$ for $j \neq i$. Note that, for each vertex $i \in Q_0$, $S_i$ is a compact exact Lagrangian submanifold of $X^{d+1}_Q$ and hence is an object of the compact Fukaya category $\F=\F(X^{d+1}_Q)$. Also, for each vertex $i \in Q_0$, $L_i$ is an exact Lagrangian submanifold of $X^{d+1}_Q$, which is allowed to be an object of the wrapped Fukaya category $\W= \W(X^{d+1}_Q)$.

	Let us now describe certain gradings on the Lagrangians $S_i$ and $L_i$ for $i \in Q_0$.
	Note that the first Chern class $c_1(X^{d+1}_Q)$ is zero as the cohomology $H^2(X^{d+1}_Q, \mathbb{Z})$ vanishes for $d+1 \geq 3$. Furthermore both $L_i$ and $S_i$ are gradable as they are simply-connected.
	
	Since the underlying graph of $Q$ is a tree, we may grade the Lagrangian spheres $S_i$, $i \in Q_0$ in such a way that,
	for each arrow $\alpha : i \to j$, the unique intersection point $p_{\alpha} \in S_i \cap S_j$ is of degree $d$ as a morphism from $S_i$ to $S_j$. Accordingly we have
	$$ \hom_{\W} (S_i,S_j) = \hom_{\F}(S_i,S_j)= \mathbb{K} [-d].$$
	Then the $(d+1)$-Calabi--Yau property of $\W$ implies that the same intersection point $p_{\alpha} \in S_i \cap S_j$ is of degree $1$ as a morphism from $S_j$ to $S_i$ and hence we have
	$$ \hom_{\W} (S_j,S_i)  = \hom_{\F} (S_j,S_i)=  \mathbb{K} [-1].$$
	
	Having graded the Lagrangian spheres $S_i$ for all $i\in Q_0$, we grade the cocore disks $L_i$ in such a way that the unique intersection point $q_i \in L_i\cap S_i$ is of degree $0$ as a morphism from $L_i$ to $S_i$ for each $i \in Q_0$. Hence, as in the case of the cotangent bundle, we have
	\begin{equation}\label{eq:cocoretocore}
		\mathrm{Hom}_{\W} (L_i ,S_i) = \mathbb{K} [0].
	\end{equation}
	
	With these specific gradings on $S_i$ and $L_i$, let $L_Q$ be an object of $\mathrm{Perf}_\W$ given by
	\begin{equation}\label{eq:siltingplumbing}
		L_Q = \bigoplus_{i \in Q_0} L_i.
	\end{equation}
	The result of \cite{cdrgg17} says that $L_Q$ split-generates $\mathrm{Perf}_\W$.
	Similarly let $S$ be an object of $\mathrm{Perf}_\F$ given by
	\begin{equation}\label{eq:compactgeneratorplumbing}
		S_Q= \bigoplus_{i \in Q_0} S_i.
	\end{equation}
	It is also shown in \cite{abo-smi12} that $S_Q$ split-generates $\mathrm{Perf}_\F$.

	Furthermore, it will be explained in Section \ref{sec:5} that the endomorphism algebra $\hom_{\W} (L_Q,L_Q)$ is quasi-isomorphic to the Ginzburg dg algebra $\Gamma_Q$. The result of \cite{her16} says that $\Hom_{D^{\pi}\W}(L_i,L_i)$ is one dimensional for each $i \in Q_0$, which implies that $L_i$ is indecomposable.
	It further says that $\Hom^*_{D^{\pi}\W}(L_Q,L_Q)$ is non-positively graded and that
	\begin{equation}\label{eq:endofinite}
		\dim \Hom_{D^{\pi}\W} (L_Q,L_Q)<\infty.
	\end{equation}
	
	It follows that $L_Q$ is a silting object of $D^{\pi} \mathcal{W}(X^{d+1}_Q)$. Therefore this result together with \eqref{eq:cocoretocore} and \eqref{eq:endofinite} imply that our $L_Q$ and $S_Q$ satisfy the assumption of Proposition \ref{prop:cytriple}. Hence we have
	\begin{thm}\label{thm:plubming}
		$(D^{\pi}\W(X^{d+1}_Q), D^{\pi} \F(X^{d+1}_Q), L_Q)$ is a $(d+1)$-Calabi--Yau triple.
	\end{thm}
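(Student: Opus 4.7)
The plan is to apply Proposition \ref{prop:cytriple} with $M = X_Q^{d+1}$, $L = L_Q$, and $S = S_Q$. The discussion preceding the theorem has already assembled most of the hypotheses: $L_Q$ split-generates $D^\pi\W$ by \cite{cdrgg17}; $S_Q$ split-generates $D^\pi\F$ by \cite{abo-smi12}; the quasi-isomorphism $\hom_\W(L_Q,L_Q) \simeq \Gamma_Q$ combined with \cite{her16} shows that $\Hom^*_{D^\pi\W}(L_Q,L_Q)$ is non-positively graded (so $L_Q$ is a silting object) and finite dimensional by \eqref{eq:endofinite} (yielding hypothesis (1)); moreover $\Hom_{D^\pi\W}(L_i,L_i) \cong \mathbb{K}$ forces each $L_i$ to be indecomposable in $D^\pi\W$. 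What remains to prove is the orthogonality condition (2), namely
\begin{equation*}
\Hom_{D^\pi\W}(L_i,S_j[p]) \cong \begin{cases} \mathbb{K} & (i=j,\ p=0),\\ 0 & (\text{otherwise}). \end{cases}
\end{equation*}

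To establish (2), I would exploit the fact that each $S_j$ is a compact exact Lagrangian whose Legendrian boundary at infinity is empty. Consequently, the quadratic wrapping Hamiltonian on $\widehat{X_Q^{d+1}}$ produces no Hamiltonian chord with an endpoint on $S_j$ escaping into the cylindrical end, and the morphism complex $\hom_\W(L_i, S_j)$ reduces, after a small compactly supported perturbation, to the ordinary Lagrangian intersection Floer complex $CF^*(L_i,S_j)$. Thus (2) becomes an intersection count. When $i=j$, the cocore $L_i$ meets the zero section $S_i$ transversely at the single point $q_i$, which by the grading choice \eqref{eq:cocoretocore} sits in degree zero, so the complex is $\mathbb{K}$ concentrated in degree $0$ and the Floer differential vanishes for degree reasons. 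When $i \neq j$, $L_i$ and $S_j$ are disjoint by construction of the plumbing: $L_i$ is supported near a cotangent fiber of the $i$-th copy of $T^*S^{d+1}$, which meets the $j$-th copy only along the plumbing point lying on $S_i \cap S_j$ but not on $L_i$.

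With (1) and (2) verified, Proposition \ref{prop:cytriple} directly delivers the Calabi--Yau triple structure. The main obstacle I anticipate is conceptual rather than computational: one must justify that $\hom_\W(L_i, S_j)$ genuinely computes the compact Lagrangian intersection Floer cohomology, i.e., that wrapping contributes nothing when one of the two Lagrangians is already compact. This is a standard maximum-principle argument given the cylindrical structure near infinity, but it is the one geometric ingredient not already packaged in the cited theorems.
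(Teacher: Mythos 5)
Your proposal is correct and follows essentially the same route as the paper: both reduce the theorem to Proposition \ref{prop:cytriple}, citing \cite{cdrgg17}, \cite{abo-smi12} and \cite{her16} for the generation, silting, finiteness and indecomposability statements, and both verify the orthogonality condition (2) by the geometric observation that $L_i$ meets $S_i$ transversely in the single degree-zero point $q_i$ and is disjoint from $S_j$ for $j\neq i$, with no extra wrapped chords since $S_j$ is compact. The paper treats this last point ``as in the case of the cotangent bundle'' rather than spelling out the maximum-principle argument you sketch, but the content is the same.
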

	
	At this point, we would like to mention about a result of Ganatra, Gao and Venkatesh \cite{GGV}. They have been working on the construction of the Rabinowitz Fukaya category of a Liouville domain, which is a categorification of an open-string counterpart to the Rabinowitz Floer homology.
	\begin{thm}[\cite{GGV}]\label{thm:GGV}
		Let $M$ be a Liouville domain. If its wrapped Fukaya category $\W(M)$ admits a Koszul dual subcategory, which is a subcategory of the compact Fukaya category $\F(M)$, then the quotient category $\W(M)/\F(M)$ and the Rabinowitz Fukaya category of $M$ are quasi-equivalent.
	\end{thm}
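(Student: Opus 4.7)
The plan is to construct an explicit $A_\infty$-functor $\Phi : \W(M)/\F(M) \to \mathcal{RW}(M)$ that is the identity on objects, and then to prove that it induces quasi-isomorphisms on all morphism complexes. Since the two categories share the same set of objects (admissible wrapped Lagrangians), fully faithfulness will give the quasi-equivalence. The Koszul dual subcategory $\mathcal{K} \subset \F(M)$ will serve as the technical bridge between the algebraic quotient and the geometric Rabinowitz construction.

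First I would compute morphisms in the quotient $\W(M)/\F(M)$ via a bimodule cofiber. Localizing away from $\F(M)$ can be expressed as the cofiber of the natural evaluation map from a bar-type bimodule tensor product, built out of the restrictions of the Yoneda modules $\mathcal{Y}^r_{L_0}$ and $\mathcal{Y}^l_{L_1}$ to $\F(M)$, into $\mathrm{hom}_{\W}(L_0,L_1)$. Under the Koszul duality hypothesis the restriction from $\F(M)$ to $\mathcal{K}$ is conservative on the relevant modules, so this tensor product can be computed purely in terms of $\mathrm{End}_\F(\mathcal{K})$-bimodule structures, and the cofiber becomes a concrete two-term complex assembled from wrapped morphisms and a linear dual of compact-Lagrangian-filtered morphisms.

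Next I would match this algebraic cofiber with the Rabinowitz Floer complex. The chain-level Rabinowitz Fukaya morphism space fits into an exact triangle
\begin{equation*}
\mathrm{hom}_{\W}(L_0,L_1) \to \mathrm{hom}_{\mathcal{RW}}(L_0,L_1) \to \mathcal{C}(L_0,L_1)[1]
\end{equation*}
where $\mathcal{C}(L_0,L_1)$ is a complex of ``negatively wrapped'' Floer chords. Using the open-closed map and a Poincar\'e--Lefschetz type duality for Lagrangians with cylindrical Legendrian ends, I would identify $\mathcal{C}(L_0,L_1)$ with the Koszul dual complex from the previous step, and verify that the connecting map of the exact triangle agrees with the action-reversal map supplied by the Rabinowitz construction.

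The main obstacle will be this last identification. Matching the analytically defined continuation maps that glue positive- and negative-action Floer data in $\mathcal{RW}(M)$ with the algebraically defined counit of the Koszul adjunction requires coherent compatibility at all $A_\infty$-levels, which is precisely where the Koszul duality hypothesis has to be invoked in a highly structured way. A secondary task is to verify that the chain-level functor $\Phi$ really descends through the Verdier quotient, i.e., that each object of $\F(M)$ becomes a zero object in $\mathcal{RW}(M)$; this is geometrically plausible since compact Lagrangians admit no nontrivial Reeb chords at infinity, but making it precise at the $A_\infty$-level relies once again on the Koszul dual structure to produce the required nullhomotopies.
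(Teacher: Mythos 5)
First, a point of orientation: the paper does not prove this statement at all---it is quoted verbatim as a theorem of Ganatra--Gao--Venkatesh \cite{GGV} and used as a black box, so there is no internal proof to measure your argument against. Judged on its own terms, your text is a plausible outline of how such a proof goes (express the Verdier quotient morphism spaces as a cofiber of a bar-type tensor product over $\F(M)$, use Koszul duality to rewrite that tensor product as a linear dual of wrapped morphisms, and match the result with the exact triangle defining the Rabinowitz complex), and this is indeed broadly the shape of the actual argument. But as written it is a plan, not a proof: every step that constitutes the actual content of the theorem is deferred rather than carried out.

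Concretely, three gaps. (1) The identification of your ``negatively wrapped'' complex $\mathcal{C}(L_0,L_1)$ with the Koszul-dual cofiber term, \emph{coherently at all $A_\infty$-levels and compatibly with the connecting maps}, is precisely the hard geometric-algebraic comparison that the cited work establishes; you name it as ``the main obstacle'' and invoke ``a Poincar\'e--Lefschetz type duality'' without constructing the chain-level maps or the homotopies making the triangles commute. (2) The claim that every object of $\F(M)$ becomes a zero object in $\mathcal{RW}(M)$, needed for your functor to descend through the quotient, is asserted as ``geometrically plausible''; it requires an actual acyclicity argument for the Rabinowitz complex of a compact Lagrangian (no Reeb chords at infinity is not by itself a chain-level nullhomotopy). (3) Your setup presumes a functor $\Phi$ that is the identity on objects and presumes the two categories have the same objects, but you never construct $\Phi$ beyond degree-zero heuristics, and the conservativity statement ``restriction from $\F(M)$ to $\mathcal{K}$ is conservative on the relevant modules'' is exactly where the Koszul duality hypothesis must be unpacked quantitatively (which modules, and why the bar tensor product over $\F$ can be replaced by one over $\mathcal{K}$). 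Until these are filled in, the proposal restates the theorem's difficulty rather than resolving it, which is consistent with the paper's decision to cite \cite{GGV} rather than prove the result.
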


	We have seen that, if either $M$ is the cotangent bundle of a simply-connected closed smooth manifold of dimension $d+1$ as in Subsection \ref{subsection:cotangent} or $M$ is the plumbing space $X^{d+1}_Q$ for some quiver $Q$ whose underlying graph is a tree as in Subsection \ref{subsection:plumbing}, there is an object $S$ (split-)generating the compact Fukaya category, which is given as in \eqref{eq:compactgeneratorcotangent} or \eqref{eq:compactgeneratorplumbing}. It was shown in \cite{etg-lek17} that the compact Fukaya category is a Koszul dual subcategory of the wrapped Fukaya category in either case.
	
	Moreover, in either case, we have shown that $(D^{\pi}\W, D^{\pi} \F, L)$ is a $(d+1)$-Calabi--Yau triple for some  
	silting object $L$ of the derived wrapped Fukaya category $D^{\pi} \W$ defined in \eqref{eq:siltingcotangent} or \eqref{eq:siltingplumbing}. In particular, this implies that $L$ is a generator of $D^\pi\W$ since $L$ induces a bounded co-t-structure on $D^\pi\W$ whose co-heart is given by $\mathrm{add}(L)$ (see \cite[Proposition 2.8]{iya-yan18}). Thus we have $D^\pi\W/D^\pi\F \simeq D^\pi(\W/\F)$ by \cite[Theorem 3.4]{dri04}. Combining the above observation with Theorem \ref{thm:GGV} and Lemma \ref{lem:main}, we have
	\begin{thm}\label{thm:Rabinowitz}  Under the above assumption, the following hold:
		\begin{enumerate}
			\item The derived Rabinowitz Fukaya category of $M$ is a $d$-Calabi--Yau triangulated category.
			\item $L$ is a cluster tilting object of the derived Rabinowitz Fukaya category of $M$.
		\end{enumerate}
	\end{thm}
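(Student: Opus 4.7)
The plan is to assemble the theorem by stringing together results that are already in place. The starting point is Theorem \ref{thm:cotangent} in the cotangent bundle case and Theorem \ref{thm:plubming} in the plumbing case, which together assert that $(D^\pi \W, D^\pi \F, L)$ is a $(d+1)$-Calabi--Yau triple under the standing hypotheses. Feeding this triple into Iyama--Yang's Theorem \ref{IY-thm} immediately yields a $d$-Calabi--Yau triangulated structure on the quotient $D^\pi \W / D^\pi \F$ and exhibits $L$ as a $d$-cluster-tilting object there. Both claims of the theorem would then follow from an identification of this quotient with the derived Rabinowitz Fukaya category $D^\pi \mathcal{RW}(M)$.

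The identification will proceed in two steps. First, I would invoke Theorem \ref{thm:GGV} of Ganatra--Gao--Venkatesh, whose hypothesis, namely the existence of a Koszul dual subcategory of $\W(M)$ inside $\F(M)$, is supplied by \cite{etg-lek17} for both classes of examples under consideration. This yields a quasi-equivalence $\W/\F \simeq \mathcal{RW}(M)$ of $A_\infty$-categories, and hence an equivalence $D^\pi(\W/\F) \simeq D^\pi \mathcal{RW}(M)$ after passing to split-closed derived categories. Second, I would use the observation (made in the discussion immediately preceding the statement) that the silting object $L$ induces a bounded co-t-structure on $D^\pi \W$ whose co-heart is $\mathrm{add}(L)$ (via \cite[Proposition 2.8]{iya-yan18}), so that $L$ is in particular a generator of $D^\pi \W$. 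Applying Drinfeld's \cite[Theorem 3.4]{dri04} to this generator then gives the identification $D^\pi \W / D^\pi \F \simeq D^\pi(\W/\F)$.

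Composing the two equivalences yields $D^\pi \mathcal{RW}(M) \simeq D^\pi \W / D^\pi \F$, and transporting the $d$-Calabi--Yau structure and the cluster tilting object across this equivalence produces assertions (1) and (2). The main point to check is that the equivalences preserve the relevant structures; since each is an exact equivalence of triangulated categories, arising either from a quasi-equivalence of $A_\infty$-categories or from Drinfeld's theorem on Verdier quotients of perfect derived categories, both the $d$-Calabi--Yau pairing and the cluster tilting property transport automatically. In this sense the theorem is essentially a bookkeeping step on top of Theorem \ref{thm:plubming} (respectively Theorem \ref{thm:cotangent}), Theorem \ref{IY-thm}, and Theorem \ref{thm:GGV}, and no new computation is required; the only mild subtlety is the verification that $L$ generates $D^\pi \W$ so that Drinfeld's theorem applies, which is precisely what the co-t-structure remark provides.
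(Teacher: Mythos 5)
Your proposal is correct and follows essentially the same route as the paper: the $(d+1)$-Calabi--Yau triple from Theorems \ref{thm:cotangent} and \ref{thm:plubming}, Iyama--Yang's Theorem \ref{IY-thm} for the $d$-Calabi--Yau structure and cluster-tilting property of the quotient, the generator/co-t-structure observation combined with Drinfeld's theorem to identify $D^\pi\W/D^\pi\F$ with $D^\pi(\W/\F)$, and Theorem \ref{thm:GGV} (with the Koszul duality input from \cite{etg-lek17}) to identify the latter with the derived Rabinowitz Fukaya category. The only cosmetic difference is that you cite Theorem \ref{IY-thm} where the paper cites Lemma \ref{lem:main}, but the logical content is identical.
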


\section{Computation of morphism spaces} \label{sec:5}

In this section, the ambient symplectic manifold $M$ is always $X_{Q}^{3}$ where $Q$ is a quiver of Dynkin type. In fact, any result in this section can be easily generalized to the case of $X_{Q}^{d+1}$ for any $d \geq 3$, but we restrict our focus to the case of $X_Q^3$ for an explicit computation (see Remark \ref{rmk:general}).

In this section, we compute the endomorphism ring
$$\mathrm{Hom}^{*}_{D^{\pi}\mathcal{W}/D^{\pi}\mathcal{F}}(L_{Q},L_{Q}) = \bigoplus_{p \in \mathbb{Z}} \mathrm{Hom}_{D^{\pi}\mathcal{W}/D^{\pi}\mathcal{F}}(L_{Q},L_{Q}[p])[-p]$$
of the cluster-tilting object $L_{Q} = \oplus L_{i}$ in $D^{\pi}\mathcal{W}/D^{\pi}\mathcal{F}$. First, we can compute the cohomology of $\hom_{\mathcal{W}}(L_{Q},L_{Q})$. By \cite{ekh-lek17}, \cite{lek-ued20}, it is shown that $\hom_{\mathcal{W}}(L_{Q},L_{Q})$ is quasi-isomorphic to the $3$-CY Ginzburg dg algebra $\Gamma_{Q}$ (for $Q$ whose underlying graph is a tree). In \cite{her16}, the minimal model of $\Gamma_{Q}$ is described.

\begin{rmk} \label{rmk:general}
Before going further, we would like to add some remarks on the general $(d+1)$-CY cases for $d \geq 3$. One missing piece for the general case is the minimal model of $\Gamma_{Q}$. In  \cite{her16}, the minimal model is given only for $3$-CY case, but the result can be generalized to the $(d+1)$-CY cases for $d\geq 3$ easily using a bigrading on $\Gamma_{Q}$. Anyway, if one can compute the cohomology of $\Gamma_{Q}$ explicitly, our results in this section can be generalized also.
\end{rmk}

\begin{dfn}
Let $Q$ be a quiver of Dynkin type. The {\em Ginzburg dg algebra} $\Gamma_{Q}$ associated to $Q$ is the path algebra of another quiver $\widehat{Q}$ which consists of the same vertex set with $Q$, i.e., $\widehat{Q}_{0} = Q_{0}$ and
\begin{itemize}
\item the arrows $Q_{1}$ of degree $0$,
\item for each $\alpha : i \to j$ in $Q_{1}$, a new arrow $\alpha^{*} : j \to i$ of degree $-1$,
\item for each vertex $i$ in $Q_{0}$, a loop $t_{i} : i \to i$ at $i$ of degree $-2$.
\end{itemize}
We denote the degree of any arrow (or path) $\gamma$ by $|\gamma|$. The differential structure is defined by
\begin{itemize}
\item for $\alpha$ in $Q_{1}$, $d \alpha = 0, d \alpha^{*} = 0$,
\item $d t_{i} = \sum (\alpha{^*} \alpha - \beta \beta^{*})$ where the sum runs over all $\alpha : i \to j$ and $\beta : k \to i$.
\end{itemize}
Here, our convention for the path $\alpha_{k} \alpha_{k-1} \dots \alpha_{2} \alpha_{1}$ is $t(\alpha_{i}) = s(\alpha_{i+1})$, $1 \leq i \leq k-1$.
For the later use, let $\overline{Q}$ denote the subquiver of $\widehat{Q}$ without the loops $t_{i}$ at each vertex $i$.
\end{dfn}

Let us define an involution $\phi$ on $Q$ for each type of Dynkin diagram. For $A_{n}$ quiver, $\phi$ is an involution defined as $\phi (i) \coloneqq n+1-i$ (regarding $i \in Q_{0}$ as an integer from $1$ to $n$). For $D_{2n+1}$ or $E_{6}$ quiver, there is the unique nontrivial involution, e.g.,
$$\begin{tikzcd}[row sep=1em, column sep=2.3em]
& & & 4 & & & & & & \phi(5) \\
1 \arrow[r] & 2 \arrow[r] & 3 \arrow[ru, "\alpha"] \arrow[rd, "\beta", swap] & & \mbox{ } \arrow[r, "\phi"] & \mbox{ } & \phi(1) \arrow[r] & \phi(2) \arrow[r] & \phi(3) \arrow[ru, "\phi(\beta)"] \arrow[rd, "\phi(\alpha)", swap] & \\
& & & 5 & & & & & & \phi(4)
\end{tikzcd}.$$
For $D_{2n}$, $E_{7}$ or $E_{8}$, we choose $\phi$ as the identity map.

\begin{dfn} \label{dfn:omega}
To define a new quiver, we add an arrow $v_{i,\phi(i)} : i \to \phi(i)$ for each $i$ to the quiver $\overline{Q}$. The degrees of such arrows are determined by an algebraic relation between Auslander--Reiten translation $\tau$ and shift functor as follow. Let $N : \overline{Q}_{0} = Q_{0} \to \mathbb{N}$ be a map which is given by
$$\tau^{-N(i)} P_{\phi(i)} = P_{i}[1].$$
Here, $P_{i}$ is the indecomposable projective module over $\mathbb{K}Q$ corresponding to $i \in Q_{0}$. Then, the degree of $v_{i,\phi_{i}}$ is defined by $-N(i)-1$.
This resulting quiver is denoted by $\Omega_{Q}$. 

Let $J$ be an ideal of the path algebra $\mathbb{K}\Omega_{Q}$ generated by
\begin{equation} \label{eqn:gen}
\begin{cases}
\mbox{for each } i \in \overline{Q}_{0}, \; \displaystyle \sum_{\substack{\alpha : i \to j \\ \beta : k \to i}} (\alpha^{*} \alpha - \beta \beta^{*}),\\
\mbox{for each } \alpha : i \to j, \; \alpha v_{\phi(i), i} - v_{\phi(j), j} \phi(\alpha).
\end{cases}
\end{equation}
\end{dfn} 

\begin{exa} \label{exa:A5}
When $Q$ is a standard $A_{5}$-quiver
$$\begin{tikzcd}[row sep=small]
1 \arrow[r] & 2 \arrow[r] & 3 \arrow[r] & 4 \arrow[r] & 5,
\end{tikzcd}$$
the quiver $\Omega_{Q}$ is given by
$$\begin{tikzcd}[row sep={4em,between origins}]
1 \arrow[r, bend left=20, "0"] \arrow[rrrr, bend left=50, "-2"] & 2 \arrow[r, bend left=20, "0"] \arrow[l, bend left=20, "-1"] \arrow[rr, bend left=70, "-3"] & 3 \arrow[r, bend left=20, "0"] \arrow[l, bend left=20, "-1"] \arrow[in=70, out=110, loop, looseness=5,"-4"] & 4 \arrow[r, bend left=20, "0"] \arrow[l, bend left=20, "-1"] \arrow[ll, bend left=70, "-5", swap] & 5. \arrow[l, bend left=20, "-1"] \arrow[llll, bend left=50, "-6", swap]
\end{tikzcd}$$
The number on each arrow denotes the degree of the corresponding arrow. We denote any arrow $\alpha : i \to j$ of degree $0$ (resp. $\alpha^{*} : j \to i$ of degree $-1$) in $\overline{Q}_{1}$ by $u_{i,j}$ (resp. $u_{j,i}$).
\end{exa}

\begin{thm}[{\cite{her16}}] \label{thm:H}
$\hom_{\mathcal{W}}(L_{Q},L_{Q})$ is quasi-isomorphic to $\mathbb{K}\Omega_{Q} / J$.
\end{thm}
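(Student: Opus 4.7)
The plan is to reduce the claim to an algebraic computation about the Ginzburg dg algebra $\Gamma_Q$ and then follow Herschend's explicit description of its cohomology ring. First I would invoke the results of Ekholm--Lekili and Lekili--Ueda, which show that $\hom_{\W}(L_Q, L_Q)$ is quasi-isomorphic to $\Gamma_Q$ as an $A_\infty$-algebra for any quiver $Q$ whose underlying graph is a tree. This reduces the theorem to identifying the graded cohomology ring $H^*(\Gamma_Q)$ with $\K\Omega_Q/J$ and then verifying formality of $\Gamma_Q$.

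Next I would build the isomorphism degree by degree. In degrees $0$ and $-1$ the cocycles are produced immediately from the defining dg generators: the idempotents $e_i$ together with the original arrows $\alpha \in Q_1$ supply all of $H^0$, and the dual arrows $\alpha^*$ supply candidate generators for $H^{-1}$. The loops $t_i$ are \emph{not} closed; their differentials $dt_i = \sum(\alpha^*\alpha - \beta\beta^*)$ kill exactly the combinations forming the first family of relations in $J$, so these pre-existing degree $-1$ cycles become boundaries. To locate the remaining classes $v_{i, \phi(i)}$, sitting in the more exotic degree $-N(i)-1$, I would invoke the bimodule $3$-Calabi--Yau structure on $\Gamma_Q$: combined with the identification (for Dynkin $Q$) of $D_c\Gamma_Q$ as a thickening of $D^b(\K Q)$ sending $P_i \coloneqq e_i\Gamma_Q$ to the indecomposable projective at $i$, together with the standard hereditary relation $S = \tau[1]$ between the Serre functor and the Auslander--Reiten translation, the equality $\tau^{-N(i)}P_{\phi(i)} = P_i[1]$ forces a one-dimensional Ext group in degree $-N(i)-1$ between $P_i$ and $P_{\phi(i)}$. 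Each such class is represented by a concrete cocycle $v_{i, \phi(i)}$ in $\Gamma_Q$.

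After that I would check that the chosen cocycles satisfy the second family of relations $\alpha v_{\phi(i),i} = v_{\phi(j),j}\phi(\alpha)$, which algebraically express naturality of the Serre duality pairing with respect to arrows of $Q$. A dimension count per pair $(i,j) \in Q_0 \times Q_0$ and per internal degree — using the known values of $\Hom_{D^b(\K Q)}(P_i, P_j[n])$ for Dynkin type together with the AR-theoretic description of the extra generators — then shows that the induced map $\K\Omega_Q/J \to H^*(\Gamma_Q)$ is a bijection, hence an isomorphism of graded $\K$-algebras. Finally, to upgrade this to a quasi-isomorphism of $A_\infty$-algebras one needs formality of $\Gamma_Q$; this can be argued by a degree-gap argument exploiting the sparse support of the cohomology in a finite set of negative degrees, or by directly appealing to Herschend's minimal-model construction.

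The main obstacle is the explicit construction of cycle representatives for the classes $v_{i, \phi(i)}$: they are typically long words in the loops $t_k$, the original arrows, and the dual arrows, whose total differentials cancel through careful tracking of Leibniz signs, and producing them uniformly across all Dynkin types — so that the type-two relations can be verified on the nose — is the combinatorial heart of the argument. A secondary difficulty is establishing formality in sufficient generality: while low-rank cases submit to ad hoc arguments, the general Dynkin case is most cleanly handled by running Herschend's minimal-model machinery, which is the route we would ultimately follow.
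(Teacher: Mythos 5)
Your overall route is the same one the paper takes: this theorem is quoted, not proved, and its content is exactly the combination of (i) the identification $\hom_{\mathcal{W}}(L_Q,L_Q)\simeq\Gamma_Q$ from Ekholm--Lekili and Lekili--Ueda and (ii) the computation of the minimal model of $\Gamma_Q$ from \cite{her16}, which is precisely the two-step reduction you propose. Your degree-by-degree reconstruction of $H^*(\Gamma_Q)$ (idempotents and arrows in degree $0$, dual arrows in degree $-1$ modulo the images of $dt_i$, and the classes $v_{i,\phi(i)}$ located via the Calabi--Yau/Serre duality and the relation $\tau^{-N(i)}P_{\phi(i)}=P_i[1]$) matches how the presentation $\mathbb{K}\Omega_Q/J$ arises in that reference.

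One caveat: your final step asserts formality of $\Gamma_Q$, suggesting a degree-gap argument. This is the weak point. The minimal model computed in \cite{her16} carries nontrivial higher products $m_{\geq 3}$ in general, so $\Gamma_Q$ is \emph{not} formal for a general Dynkin quiver, and a sparseness-of-support argument will not rescue this (the cohomology occupies a contiguous range of negative degrees, so the relevant Hochschild obstruction groups do not vanish for degree reasons). The correct reading of the theorem --- and the only thing the paper ever uses --- is that the underlying graded algebra of the minimal model, i.e.\ the cohomology ring $H^*\hom_{\mathcal{W}}(L_Q,L_Q)$, is isomorphic to $\mathbb{K}\Omega_Q/J$; the quasi-isomorphism should be understood at that level (or as an $A_\infty$-quasi-isomorphism onto the minimal model whose underlying algebra is $\mathbb{K}\Omega_Q/J$), not as a formality statement. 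If you drop the formality claim and stop at the ring isomorphism, your argument is exactly the cited one.
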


Based on this theorem, we denote a morphism between the Lagrangians $L_i$ by the corresponding path in the quiver. A corollary that $\hom_{\mathcal{W}}(L_{Q},L_{Q})$ is supported in non-positive degrees follows immediately.

From now on, $Q$ denotes the standard $A_{n}$-quiver (as in Example \ref{exa:A5}). We give a specific representative for any path in $\mathbb{K}\Omega_{Q} / J$. In this case, the generators of the ideal $J$ are given by
\begin{equation} \label{eqn:rel1}
u_{12} u_{21},\; u_{32} u_{23} - u_{12} u_{21}, \; \dots, \; u_{n,n-1} u_{n-1,n} - u_{n-2,n-1} u_{n-1,n-2}, \; -u_{n-1,n} u_{n,n-1},
\end{equation}
and
\begin{equation} \label{eqn:rel2}
\begin{split}
&u_{12} v_{n,1} - v_{n-1,2} u_{n,n-1}, \; u_{23} v_{n-1,2} - v_{n-2,3} u_{n-1,n-2}, \; \dots, \; u_{n-1,n} v_{2,n-1} - v_{1,n} u_{21}, \\
&u_{21} v_{n-1,2} - v_{n,1} u_{n-1,n}, \; u_{32} v_{n-2,3} - v_{n-1,2} u_{n-2,n-1}, \; \dots, \; u_{n,n-1} v_{1,n} - v_{2,n-1} u_{12}.
\end{split}
\end{equation}

Note that any path in $\mathbb{K}\Omega_{Q} / J$ consists of $u$-arrows and $v$-arrows. Using the relations \eqref{eqn:rel2}, one can gather all the $v$-arrows in the starting point (or endpoint) of given path. For example, for the following path from $2$ to $3$ in Example \ref{exa:A5},
\begin{equation} \label{eqn:ex1}
\begin{split}
u_{23} u_{32} v_{33} u_{43} u_{54} v_{15} u_{21} &\sim u_{23} u_{32} v_{33} u_{43} u_{54} u_{45} v_{24}\\
& \sim u_{23} u_{32} u_{23} v_{42} u_{54} u_{45} v_{24} \\
& \sim u_{23} u_{32} u_{23} u_{12} v_{51} u_{45} v_{24} \\
& \sim u_{23} u_{32} u_{23} u_{12} u_{21} v_{42} v_{24}.
\end{split}
\end{equation}
Hence, any path from $i$ to $j$ can be written as
\begin{equation} \label{eqn:can}
u_{j \pm 1, j} \dots u_{i, i\pm 1} v_{\phi(i),i} \dots v_{i, \phi(i)} \mbox{ or } u_{j \pm 1, j} \dots u_{\phi(i), \phi(i)\pm 1} v_{i,\phi(i)} v_{\phi(i),i} \dots v_{i, \phi(i)}.
\end{equation}
We call this a canonical representation. For the later, we give a list of useful relations coming from the original relations. The proof is an easy exercise.

\begin{lem} \label{lem:eqn}
The following equations hold in $\mathbb{K}\Omega_{Q} / J$.
\begin{enumerate}
\item $u_{i,i\pm1} v_{\phi(i),i} v_{i,\phi(i)} = v_{\phi(i)\mp1,i\pm1} v_{i\pm1,\phi(i)\mp1} u_{i,i\pm1}$.
\item $u_{i\pm1,i} u_{i,i\pm1} v_{\phi(i),i} v_{i,\phi(i)} = v_{\phi(i),i} v_{i,\phi(i)} u_{i\pm1,i} u_{i,i\pm1}$.
\item $u_{i\pm1,i} (u_{i\pm2,i\pm1} u_{i\pm1,i\pm2})^{k} u_{i,i\pm1} = (u_{i\pm1,i} u_{i,i\pm1})^{k+1}$.
\end{enumerate}
\end{lem}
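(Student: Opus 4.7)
All three identities follow from direct rewriting using the defining relations \eqref{eqn:rel1} and \eqref{eqn:rel2} of the ideal $J$; no structural input beyond these is required. The key observation underlying (1) and (2) is that both families in \eqref{eqn:rel2} can be unified into a single commutation rule
\[
u_{a,b}\, v_{\phi(a),a} \;=\; v_{\phi(b),b}\, u_{\phi(a),\phi(b)},
\]
valid for every arrow $u_{a,b}$ of $\overline{Q}$. To prove (1), I would apply this rule twice: first to $u_{i,i\pm1}\, v_{\phi(i),i}$ to obtain $v_{\phi(i)\mp1,\,i\pm1}\, u_{\phi(i),\,\phi(i)\mp1}$, and then to $u_{\phi(i),\,\phi(i)\mp1}\, v_{i,\phi(i)}$ (using $\phi\circ\phi = \mathrm{id}$), which pushes the remaining $u$-factor past the second $v$-arrow and yields exactly the right-hand side of (1).

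For (2), the plan is to apply (1) twice. Starting from $u_{i\pm1,i}\, u_{i,i\pm1}\, v_{\phi(i),i}\, v_{i,\phi(i)}$, I first use (1) on the right-most three factors to move $u_{i,i\pm1}$ past the $v$-pair, obtaining $u_{i\pm1,i}\, v_{\phi(i)\mp1,\,i\pm1}\, v_{i\pm1,\,\phi(i)\mp1}\, u_{i,i\pm1}$. I then apply (1) again, this time with $i$ replaced by $i\pm1$ and $\pm$ replaced by $\mp$, to move $u_{i\pm1,i}$ past the new $v$-pair; the resulting $v$-arrows collapse to $v_{\phi(i),i}\, v_{i,\phi(i)}$, producing the right-hand side.

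For (3), the argument instead uses \eqref{eqn:rel1}. The relation at the interior vertex $i\pm1$ reads $u_{i\pm2,i\pm1}\, u_{i\pm1,i\pm2} = u_{i,i\pm1}\, u_{i\pm1,i}$ (both are loops at $i\pm1$). Substituting this identity inside the $k$-th power on the left-hand side converts $(u_{i\pm2,i\pm1}\, u_{i\pm1,i\pm2})^{k}$ into $(u_{i,i\pm1}\, u_{i\pm1,i})^{k}$, and the purely combinatorial reassociation $(u_{i,i\pm1}\, u_{i\pm1,i})^{k}\, u_{i,i\pm1} = u_{i,i\pm1}\, (u_{i\pm1,i}\, u_{i,i\pm1})^{k}$ immediately yields $(u_{i\pm1,i}\, u_{i,i\pm1})^{k+1}$. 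The only anticipated obstacle is clerical: keeping the action of $\phi$ on neighbouring vertices straight in (1) and (2). Once the unified form of \eqref{eqn:rel2} displayed above is isolated, everything becomes mechanical, which matches the author's assertion that the proof is an easy exercise.
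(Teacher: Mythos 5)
Your proof is correct: the unified commutation rule you isolate, $u_{a,b}\,v_{\phi(a),a}=v_{\phi(b),b}\,u_{\phi(a),\phi(b)}$, is precisely the second family of generators of $J$ from Definition \ref{dfn:omega} (equivalently \eqref{eqn:rel2}), two applications of it give (1), a second application at the vertex $i\pm1$ with the opposite sign gives (2), and the mesh relation \eqref{eqn:rel1} at the interior vertex $i\pm1$ together with reassociation gives (3). The paper leaves this lemma as an easy exercise, and your direct rewriting is exactly the intended argument, in the same style as the manipulations the paper itself carries out in \eqref{eqn:ex1} and in the proof of Proposition \ref{prop:ieqj}.
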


There are two types of $u$-arrows; one is an increasing type $u_{i,i+1}$ and the other is a decreasing type $u_{i,i-1}$. The relations \eqref{eqn:rel1} give ways to change the order of increasing type $u$-arrow and decreasing type $u$-arrow. Then we can prove the following lemma.

\begin{lem} \label{lem:u}
Let $P$ be a path from $i$ to $j$ only consisting of $u$-arrows. Denote the number of increasing type $u$-arrows in $P$ by $I(P)$ and the number of decreasing type $u$-arrows in $P$ by $D(P)$. Then,
$$I(P) - D(P) = j-i.$$
Moreover, $P$ becomes zero in $\mathbb{K}\Omega_{Q}/J$ if and only if $I(P) > \min \{n-i, j-1\}$ holds.

\begin{proof}
The first equation is directly given by the definitions of $I(P)$ and $D(P)$. Assume that $i<j$, $\min \{n-i, j-1\} = n-i$ without loss of generality and $I(P) > n-i$. Using the relations \eqref{eqn:rel1}, one can arrange $P$ into
$$u_{i+1,i} \dots u_{n,n-1} u_{n-1,n} \dots u_{j,j+1} u_{j-1,j} \dots u_{i+1,i+2} u_{i,i+1}.$$
The first $n-i$ $u$-arrows are of the increasing type. Since $I(P) > n-i$, there should be an arrow $u_{n,n-1}$ in $P$ after $n-i$ $u$-arrows and after that, $u_{n-1,n}$ should come. Hence, $P$ vanishes in $\mathbb{K}\Omega_{Q}/J$ because its arranged form contains the subpath $u_{n-1,n} u_{n,n-1}$. All the other cases can be proved in a similar way.
Also, by the same reason, if $P$ contains $u_{n-1,n} u_{n,n-1}$ or $u_{21} u_{12}$, then $I(P)$ should be greater than $\min \{n-i, j-1\}$.
\end{proof}
\end{lem}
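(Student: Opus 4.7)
The plan is to handle the two assertions in order. The first identity $I(P) - D(P) = j - i$ is immediate from the definitions: each increasing arrow $u_{k,k+1}$ raises the current vertex by $1$ and each decreasing arrow $u_{k,k-1}$ lowers it by $1$, so the signed total along the traversal of $P$ from $i$ to $j$ equals both $j - i$ and $I(P) - D(P)$.

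For the ``if and only if'' statement, the decisive inputs from the generators of $J$ in \eqref{eqn:rel1} are: at vertex $1$ one has $u_{21} u_{12} = 0$, at vertex $n$ one has $u_{n-1,n} u_{n,n-1} = 0$, and at each internal vertex $k \in \{2,\dots,n-1\}$ the two local loops are identified, $u_{k+1,k} u_{k,k+1} = u_{k-1,k} u_{k,k-1}$. Up to the involution $k \mapsto n+1-k$ (which interchanges the two boundary relations) and orientation reversal of $P$, I may assume $i \le j$ and $\min\{n-i,\, j-1\} = n - i$; the remaining cases then follow by the same argument.

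For the direction $I(P) > n - i \Rightarrow P = 0$, the plan is to use the internal relations to rearrange $P$ so that its increasing arrows are applied as early as possible in the traversal. After rearrangement the first $n-i$ arrows carry the walk from $i$ up to $n$; because $I(P) > n - i$, at least one further increasing arrow remains to be used, yet from $n$ the only continuation is the decreasing arrow $u_{n,n-1}$. To eventually fire another increasing arrow one must later apply $u_{n-1,n}$, and by iteratively propagating this second excursion up toward the top using the relations at $n-1, n-2, \dots$, I can bring the written form to exhibit the substring $u_{n-1,n} u_{n,n-1}$. By the vertex-$n$ relation this substring is zero, hence $P = 0$ in $\mathbb{K}\Omega_Q/J$.

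For the converse $I(P) \le \min\{n-i,\, j-1\} \Rightarrow P \ne 0$, I represent the class of $P$ by the pyramid-shaped walk $P_0$ that ascends by $I(P)$ steps from $i$ to $i + I(P) \le n$ and then descends by $D(P)$ steps to $j \ge 1$. The written form of $P_0$ contains neither $u_{21} u_{12}$ nor $u_{n-1,n} u_{n,n-1}$, and because every internal relation only creates loops at internal vertices $2 \le k \le n-1$, no single local rewriting inside this class introduces a boundary loop either. The main obstacle is then to verify that $P_0$ is genuinely non-zero in the quotient rather than being killed by a longer chain of relations; for this I plan to invoke Theorem~\ref{thm:H} together with the explicit minimal model of $\Gamma_{A_n}$ computed in \cite{her16}, which pins down the graded dimension of $\mathrm{Hom}^*_{D^\pi\W}(L_i, L_j)$ and confirms that the class of $P_0$ is non-zero exactly in the range $I(P) \le \min\{n-i,\, j-1\}$.
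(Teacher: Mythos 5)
Your proof of the identity $I(P)-D(P)=j-i$ and of the vanishing direction is essentially the paper's argument: reduce by the involution/reversal symmetry to the case $\min\{n-i,j-1\}=n-i$, use the internal relations $u_{k+1,k}u_{k,k+1}=u_{k-1,k}u_{k,k-1}$ to push the excess up-steps toward the top vertex, and observe that the rearranged word must contain $u_{n-1,n}u_{n,n-1}=0$. (Your justification of the WLOG reduction, via $D(P)=I(P)-(j-i)$, is in fact more explicit than the paper's.) Where you diverge is the non-vanishing direction. The paper disposes of it in one sentence --- ``if $P$ contains $u_{n-1,n}u_{n,n-1}$ or $u_{21}u_{12}$ then $I(P)>\min\{n-i,j-1\}$'' --- implicitly asserting that a $u$-path can only die by being rewritable into a boundary loop. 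You instead normalize to the pyramid walk $P_0$ and propose to certify its non-vanishing by matching against the graded dimensions of $\mathrm{Hom}^*_{D^\pi\W}(L_i,L_j)$ coming from Theorem \ref{thm:H} and the minimal model of \cite{her16}. You are right to flag that ``$P_0$ contains no boundary loop'' is not by itself conclusive: when $I(P)\le n-i$ but $D(P)>i-1$ the up-first pyramid is a perfectly valid walk with no forbidden substring, yet the class is zero because a longer chain of rewritings (pushing bounces \emph{down} to vertex $1$) produces $u_{21}u_{12}$; this is exactly why the bound is $\min\{n-i,j-1\}$ and not just $n-i$. Your appeal to the external dimension count is a legitimate way to close this, and arguably more rigorous than the paper's terse appeal to ``the same reason,'' though you have not actually carried out the count; if you do, note that in degree $-D(P)$ you must separate the $u$-only classes from classes involving $v$-arrows (the latter sit in much lower degree, so this works out).
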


As an example, let $P$ be $u_{23} u_{32} u_{23} u_{12} u_{21}$ in \eqref{eqn:ex1}. By definition, $I(P) = 3$, $D(P) = 2$ and $3 = I(P) > \min \{ 3, 2 \} = 2$ hold. Then,
\begin{equation*} \label{eqn:ex2}
u_{23} u_{32} u_{23} u_{12} u_{21} \sim u_{23} u_{12} u_{21} u_{12} u_{21}
\end{equation*}
and hence it contains $u_{21} u_{12} \in J$.

Now, we compute the endomorphism ring $\mathrm{Hom}^{*}_{D^{\pi}\mathcal{W}}(L_{Q},L_{Q})$ using path algebra with relations $\mathbb{K}\Omega_{Q} / J$ for standard $A_{n}$-quiver $Q$. First, $\mathrm{Hom}^{*}_{D^{\pi}\mathcal{W}}(L_{i},L_{i})$ consists of all paths from $i$ to $i$ in $\mathbb{K}\Omega_{Q} / J$. Any such path can be written as
\begin{equation*}
u_{i \pm 1, i} \dots u_{i, i\pm 1} v_{\phi(i),i} \dots v_{i, \phi(i)} \mbox{ or } u_{i \pm 1, i} \dots u_{\phi(i), \phi(i)\pm 1} v_{i,\phi(i)} v_{\phi(i),i} \dots v_{i, \phi(i)}.
\end{equation*}
We decompose these paths into three parts $X$, $Y$ and $Z$. $X$ part is given by
$$\begin{cases}
u_{i+1,i} \dots u_{\phi(i),\phi(i)-1} v_{i, \phi(i)} = v_{\phi(i),i} u_{\phi(i)-1,\phi(i)} \dots u_{i,i+1} &\mbox{ if } i<\phi(i),\\
u_{i-1,i} \dots u_{\phi(i),\phi(i)+1} v_{i, \phi(i)} = v_{\phi(i),i} u_{\phi(i)+1,\phi(i)} \dots u_{i,i-1} &\mbox{ if } i>\phi(i),\\
v_{i, \phi(i)} &\mbox{ if } i=\phi(i).
\end{cases}$$
$Y$ part is a path $(v_{\phi(i),i} v_{i, \phi(i)})^{l}$ for some $l \geq 0$ from $i$ to $i$ which consists of only $v$-arrows. $Z$ part is a path from $i$ to $i$ which consists of only $u$-arrows and therefore satisfies $I(Z) = D(Z)$. With these notations, the path is decomposed into $ZY$ if the number of $v$-arrows is even or $ZXY$ if the number of $v$-arrows is odd.

Let $x_{i}$ be the subpath corresponding to $X$ part, $y_{i} = v_{\phi(i),i} v_{i,\phi(i)}$ and $z_{i} = u_{i+1,i} u_{i, i+1} = u_{i-1,i} u_{i,i-1}$. Their degrees are given by $|x_{i}| = \min\{n-i,i-1\} -n-1 $, $|y_{i}| = -n-3$ and $|z_{i}| = -1$. Then, we have the following isomorphisms.

\begin{prop} \label{prop:ieqj}
There is a ring isomorphism
$$\mathrm{Hom}^{*}_{D^{\pi}\mathcal{W}}(L_{i},L_{i}) \cong
\begin{cases}
\mathbb{K}[x_{i},y_{i},z_{i}]/\langle z_{i}^{k_{i}}, x_{i}^{2} - y_{i} \rangle \cong \mathbb{K}[x_{i},z_{i}]/\langle z_{i}^{k_{i}} \rangle &(\mbox{if }n\mbox{ is odd and }i = \frac{n+1}{2}), \\
\mathbb{K}[x_{i},y_{i},z_{i}]/\langle z_{i}^{k_{i}}, x_{i}^{2} - y_{i}z_{i}^{l_{i}} \rangle &(\mbox{otherwise})
\end{cases}$$
where $k_{i} = \min \{ n-i, i-1 \} + 1$ and $l_{i} = |\phi(i) - i|$.

\begin{proof}
First, we need to show that the subpaths $x_{i}$, $y_{i}$ and $z_{i}$ commute in $\mathbb{K}\Omega_{Q} / J$. For example, the following modification when $i<\phi(i)$
\begin{align*}
u_{i+1,i} \dots u_{\phi(i),\phi(i)-1} v_{i, \phi(i)} v_{\phi(i),i} v_{i,\phi(i)} &\sim v_{\phi(i),i} u_{\phi(i)-1,\phi(i)} \dots u_{i,i+1} v_{\phi(i),i} v_{i,\phi(i)} \\
& \sim v_{\phi(i),i} u_{\phi(i)-1,\phi(i)} \dots u_{i+1,i+2} v_{\phi(i)-1,i+1} u_{\phi(i),\phi(i)-1} v_{i,\phi(i)} \\
& \hspace{10em} \vdots \\
& \sim v_{\phi(i),i} v_{i,\phi(i)} u_{i+1,i} \dots u_{\phi(i),\phi(i)-1} v_{i,\phi(i)}
\end{align*}
gives $x_{i}y_{i} = y_{i}x_{i}$. One can check $y_{i}z_{i}=z_{i}y_{i}$ and $x_{i}z_{i}=z_{i}y_{i}$ in a similar way using Lemma \ref{lem:eqn}. By Lemma \ref{lem:u}, since $I(z_{i}^{k_{i}}) = k_{i} = \min \{ n-i, i-1 \} + 1$, $z_{i}^{k_{i}}$ vanishes and $z_{i}^{k}$ does not vanish for $k < k_{i}$. Next, $x_{i}^{2}$ can be written as
$$u_{i+1,i} \dots u_{\phi(i),\phi(i)-1} v_{i, \phi(i)} v_{\phi(i),i} u_{\phi(i)-1,\phi(i)} \dots u_{i,i+1}$$
if $i<\phi(i)$ and by Lemma \ref{lem:eqn} (a), it becomes $(u_{i+1,i} u_{i,i+1})^{l_{i}} v_{\phi(i),i} v_{i,\phi(i)}$. When $i=\phi(i)$ or $i>\phi(i)$, all the computations are similar.

From the canonical representative of the path in $\Omega_{Q}$, one can see that any relation containing more than two $v$-arrows should be in the ideal $\langle z_{i}^{k_{i}}, x_{i}^{2} - y_{i}z_{i}^{l_{i}} \rangle$, i.e., only these two are the minimal relations. Thus, there is an injective ring homomorphism
$$f : \mathbb{K}[x_{i},y_{i},z_{i}]/\langle z_{i}^{k_{i}}, x_{i}^{2} - y_{i}z_{i}^{l_{i}} \rangle \to \mathrm{Hom}^{*}_{D^{\pi}\mathcal{W}}(L_{i},L_{i}).$$
Also from the canonical representative, any path can be written as
$$z_{i}^{a}y_{i}^{b} \mbox{ or } z_{i}^{a}x_{i}y_{i}^{b}$$
for some $a$, $b \in \mathbb{Z}_{\geq 0}$. Hence, $f$ is a ring isomorphism.
\end{proof}
\end{prop}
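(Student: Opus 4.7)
The plan is to exploit the algebra isomorphism $\hom_{\mathcal{W}}(L_Q,L_Q) \simeq \mathbb{K}\Omega_Q/J$ given by Theorem \ref{thm:H}, which realizes $\mathrm{Hom}^{*}_{D^{\pi}\mathcal{W}}(L_i,L_i)$ as the subalgebra spanned by closed paths at the vertex $i$. Combined with the canonical form \eqref{eqn:can}, every such path is equivalent to one of the shapes $z_i^a y_i^b$ or $z_i^a x_i y_i^b$ with $a,b \geq 0$. My strategy is to verify in sequence (i) the pairwise commutativity of $x_i,y_i,z_i$, (ii) the stated relations $z_i^{k_i}=0$ and $x_i^2 = y_i z_i^{l_i}$, and (iii) that these exhaust the relations, so that the obvious surjection from the polynomial-ring quotient is an isomorphism.

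For step (i), I will slide $v$-arrows past $u$-arrows using the relation $\alpha v_{\phi(i),i}=v_{\phi(j),j}\phi(\alpha)$ coming from \eqref{eqn:rel2}; the three parts of Lemma \ref{lem:eqn} package exactly the manipulations needed to check $x_i y_i = y_i x_i$, $y_i z_i = z_i y_i$, and $x_i z_i = z_i x_i$. For step (ii), the vanishing $z_i^{k_i}=0$ is immediate from Lemma \ref{lem:u}, since $z_i^{k_i}$ contains exactly $k_i = \min\{n-i,i-1\}+1$ increasing-type arrows, just at the vanishing threshold, while $z_i^k$ for $k<k_i$ sits strictly below it. The identity $x_i^2 = y_i z_i^{l_i}$ comes from writing $x_i^2$ as a zigzag in which the two $v$-arrows meet in the middle as $v_{i,\phi(i)}v_{\phi(i),i}$, and then sliding that $v$-pair to one end by iterating Lemma \ref{lem:eqn}(a) exactly $l_i$ times; each slide contributes one factor of $z_i$, and when $l_i = 0$ (the case $i = \phi(i)$, which forces $n$ odd and $i = (n+1)/2$) no factors are produced, so the relation collapses to $x_i^2 = y_i$.

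For step (iii), I will define the ring map
\begin{equation*}
f \colon \mathbb{K}[x_i,y_i,z_i]/\langle z_i^{k_i},\, x_i^2 - y_i z_i^{l_i}\rangle \longrightarrow \mathrm{Hom}^{*}_{D^{\pi}\mathcal{W}}(L_i,L_i)
\end{equation*}
sending each generator to the path of the same name, which is well-defined by (i) and (ii). Surjectivity is immediate from the canonical form. For injectivity, the key point is that every identification in $\mathbb{K}\Omega_Q/J$ between closed paths at $i$ is already a consequence of $z_i^{k_i}$, $x_i^2 - y_i z_i^{l_i}$, and the commutations: any word with more than two $v$-arrows can be reduced by iterated use of the slide identities in Lemma \ref{lem:eqn} to a shorter canonical shape, thereby expressing any syzygy in terms of the two listed relations. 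The principal technical step to get right is precisely this reduction, namely that the canonical forms $\{z_i^a y_i^b,\ z_i^a x_i y_i^b : 0 \leq a < k_i,\ b \geq 0\}$ are not only spanning but also linearly independent in $\mathbb{K}\Omega_Q/J$; this should follow cleanly from the bookkeeping of $v$-arrow parity and $u$-arrow increment counts already encoded in Lemmas \ref{lem:eqn} and \ref{lem:u}, together with the degree data $|x_i| = \min\{n-i,i-1\} - n - 1$, $|y_i| = -n-3$, $|z_i| = -1$.
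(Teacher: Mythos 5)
Your proposal is correct and follows essentially the same route as the paper: commutativity of $x_i,y_i,z_i$ via Lemma \ref{lem:eqn}, the relations $z_i^{k_i}=0$ (Lemma \ref{lem:u}) and $x_i^2=y_iz_i^{l_i}$ via Lemma \ref{lem:eqn}(a), and then surjectivity plus injectivity of $f$ read off from the canonical representation of closed paths. Your explicit remark that injectivity amounts to linear independence of the canonical forms $z_i^ay_i^b$, $z_i^ax_iy_i^b$ with $0\leq a<k_i$ is just a slightly more spelled-out version of the paper's assertion that the two listed relations are the minimal ones, so there is no substantive difference.
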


Let $U_{ij}$ be the shortest path consisting of $u$-arrows from $i$ to $j$ such that $I(U_{ij})=0$ if $i>j$ or $D(U_{ij}) =0$ if $i<j$. We define $U_{ii}=e_{i}$ if $i=j$ for the consistent notation. Denote the path $U_{\phi(i),j} v_{i,\phi(i)}$ by $V_{ij}$. Then, one can see the generation result of the morphism space from their canonical representative for the case of $i \neq j$.

\begin{prop} \label{prop:ineqj}
Suppose $i \neq j$. Then two paths $U_{ij}$ and $V_{ij}$ generate the morphism space $\mathrm{Hom}^{*}_{D^{\pi}\mathcal{W}}(L_{i},L_{j})$ as a $\mathrm{Hom}^{*}_{D^{\pi}\mathcal{W}}(L_{j},L_{j})$-module.
\end{prop}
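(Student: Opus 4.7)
The plan is to use the canonical form \eqref{eqn:can} for paths from $i$ to $j$ and collect all endomorphism-of-$L_j$ factors to the left of $U_{ij}$ or $V_{ij}$. Concretely, every path from $i$ to $j$ is either (even $v$-count) of the form $Q\, y_i^l$ with $Q$ a $u$-only path from $i$ to $j$, or (odd $v$-count) of the form $Q'\, v_{i,\phi(i)}\, y_i^l$ with $Q'$ a $u$-only path from $\phi(i)$ to $j$.

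The core reduction is the following sub-lemma, which I prove by induction on length: any $u$-only path $P$ from $a$ to $b$ equals $z_b^m\, U_{ab}$ in $\mathbb{K}\Omega_Q/J$ for some $m\geq 0$, or is zero. The base case $|P|=|b-a|$ forces monotonicity, so $P = U_{ab}$. If $|P|>|b-a|$, the vertex sequence of $P$ must have a reversal, i.e., three consecutive vertices $w_k, w_{k+1}, w_{k+2}$ with $w_k=w_{k+2}=c$ and $w_{k+1}=c\pm 1$, producing the contiguous subword $u_{c\pm1, c}\, u_{c, c\pm1} = z_c$. If $c\in\{1,n\}$ then $z_c=0$ by Lemma \ref{lem:u}, so $P$ vanishes. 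Otherwise I factor $P = P_{\mathrm{after}}\cdot z_c\cdot P_{\mathrm{before}}$ and push $z_c$ through $P_{\mathrm{after}}$ using the identity $u_{c',c'\pm1}\, z_{c'} = z_{c'\pm1}\, u_{c',c'\pm1}$, which holds because both sides expand to $u_{c',c'\pm1}\, u_{c'\pm1,c'}\, u_{c',c'\pm1}$ via the two presentations of $z_{c'}$ and $z_{c'\pm1}$. The result is $P = z_b \cdot (P_{\mathrm{after}}\cdot P_{\mathrm{before}})$, whose inner $u$-only path has length $|P|-2$, so the induction applies.

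To combine, Lemma \ref{lem:eqn}(a) gives $u_{a,a\pm1}\, y_a = y_{a\pm 1}\, u_{a,a\pm 1}$, which iterates along a $u$-only path from $a$ to $b$ to yield (that path)$\cdot y_a^l = y_b^l\cdot$(that path). In the even case, $Q\, y_i^l = y_j^l\, Q = y_j^l\, z_j^m\, U_{ij}$. In the odd case, since $\phi(\phi(i))=i$ we have $y_{\phi(i)} = v_{i,\phi(i)}\, v_{\phi(i),i}$ and hence $v_{i,\phi(i)}\, y_i = y_{\phi(i)}\, v_{i,\phi(i)}$, giving
\begin{equation*}
Q'\cdot v_{i,\phi(i)}\cdot y_i^l = Q'\cdot y_{\phi(i)}^l\cdot v_{i,\phi(i)} = y_j^l\cdot Q'\cdot v_{i,\phi(i)} = y_j^l\, z_j^m\, U_{\phi(i),j}\, v_{i,\phi(i)} = y_j^l\, z_j^m\, V_{ij}.
\end{equation*}
Since $y_j^l z_j^m \in \mathrm{Hom}^*_{D^\pi\W}(L_j,L_j)$, every path from $i$ to $j$ lies in the $\mathrm{Hom}^*_{D^\pi\W}(L_j,L_j)$-span of $U_{ij}$ and $V_{ij}$, proving the generation statement. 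The main obstacle is the sub-lemma: one must carefully identify the reversal subword as a $z_c$, treat the boundary vanishing $z_1=z_n=0$ as a separate case, and check that pushing $z_c$ through $P_{\mathrm{after}}$ remains consistent when $P_{\mathrm{after}}$ itself passes through the boundary vertices $1$ or $n$ (where intermediate $z$'s vanish but both sides of the relevant commutation agree by being zero).
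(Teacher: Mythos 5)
Your argument is correct and follows essentially the same route as the paper: the paper deduces the proposition directly from the canonical representation \eqref{eqn:can}, and your proof just makes explicit the two ingredients that reading requires, namely that every $u$-only path from $a$ to $b$ reduces to $z_b^{m}U_{ab}$ or vanishes, and that the loops $y_a$ commute past $u$- and $v$-arrows via Lemma \ref{lem:eqn}. The boundary cases $z_1=z_n=0$ are handled correctly, so nothing further is needed.
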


\begin{rmk}
There is a relation between $U_{ij}$ and $V_{ij}$. There are $4$ possible cases when $i<j$.
\begin{itemize}
\item $i<j \leq \phi(j)<\phi(i)$ : $V_{ij} = x_{j} U_{ij}$,
\item $i \leq \phi(j) \leq j \leq \phi(i)$ : $z_{j}^{j-\phi(j)}V_{ij} = x_{j}U_{ij}$,
\item $\phi(j)<\phi(i) \leq i<j$ : $z_{j}^{j-i}V_{ij} = x_{j}U_{ij}$,
\item $\phi(j) \leq i \leq \phi(i) \leq j$ : $z_{j}^{j-i}V_{ij} = x_{j}U_{ij}$.
\end{itemize}
Note that if $j-\phi(j) \geq k_{j}$ or $j-i \geq k_{j}$, then the corresponding terms in the left hand side are zero in fact.
There are also $4$ cases when $i>j$ and one can get a similar list by exchanging $i$ and $j$.
\end{rmk}

From Propositions \ref{prop:ieqj} and \ref{prop:ineqj}, we can compute any component of $\mathrm{Hom}^{*}_{D^{\pi}\mathcal{W}}(L_{Q},L_{Q})$ explicitly. Note that $\mathrm{Hom}^{>0}_{D^{\pi}\mathcal{W}}(L_{Q},L_{Q}) = 0$ since the positive degree part of $\mathbb{K}\Omega_{Q} / J$ is trivial. We move on the quotient category $D^{\pi}\mathcal{W}/D^{\pi}\mathcal{F}$. Corollary \ref{cor:hom} gives a relation between the morphism spaces in $D^{\pi}\mathcal{W}$ and $D^{\pi}\mathcal{W}/D^{\pi}\mathcal{F}$.

\begin{cor}
As a vector space, 
$$\mathrm{Hom}_{D^{\pi}\mathcal{W}/D^{\pi}\mathcal{F}}(L_{Q},L_{Q}[p]) \cong \begin{cases}
\mathrm{Hom}_{D^{\pi}\mathcal{W}}(L_{Q},L_{Q}[p]) & (p \leq 0), \\
0 & (p = 1), \\
D\mathrm{Hom}_{D^{\pi}\mathcal{W}}(L_{Q},L_{Q}[2-p]) & (p \geq 2).
\end{cases}$$
\end{cor}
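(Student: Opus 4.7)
The plan is to invoke the already-established Corollary \ref{cor:hom} directly once the appropriate Calabi--Yau triple is in place. Theorem \ref{thm:plubming} shows that $(D^{\pi}\mathcal{W}(X_Q^{d+1}), D^{\pi}\mathcal{F}(X_Q^{d+1}), L_Q)$ is a $(d+1)$-Calabi--Yau triple whenever the underlying graph of $Q$ is a tree; specializing to a Dynkin quiver $Q$ and $d=2$ gives a $3$-Calabi--Yau triple $(D^{\pi}\mathcal{W}(X_Q^{3}), D^{\pi}\mathcal{F}(X_Q^{3}), L_Q)$, which is exactly the datum required by Corollary \ref{cor:hom}.

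First I would verify that the hypotheses of Corollary \ref{cor:hom} are met: set $\mathcal{C} = D^{\pi}\mathcal{W}(X_Q^{3})$, $\mathcal{D} = D^{\pi}\mathcal{F}(X_Q^{3})$, $M = L_Q$, and $d = 2$. Then I would read off the three cases of the corollary and match them to the three cases of the statement. The range $p \leq 0$ yields $\mathrm{Hom}_{D^{\pi}\mathcal{W}}(L_Q,L_Q[p])$, as in the first case. The range $1 \leq p \leq d-1$ collapses to the single value $p = 1$ since $d-1 = 1$, giving the zero case. Finally, the range $p \geq d = 2$ gives $D\mathrm{Hom}_{D^{\pi}\mathcal{W}}(L_Q,L_Q[d-p]) = D\mathrm{Hom}_{D^{\pi}\mathcal{W}}(L_Q,L_Q[2-p])$, matching the third case exactly.

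There is no substantive obstacle: the nontrivial work has already been done in establishing the Calabi--Yau triple and in proving Corollary \ref{cor:hom}, so the proof amounts to a single sentence. As a minor sanity check, I would note that for $p \geq 3$ the right-hand side involves $\mathrm{Hom}_{D^{\pi}\mathcal{W}}(L_Q,L_Q[2-p])$ with $2-p \leq -1$, which is permitted to be nonzero because $\hom_{\mathcal{W}}(L_Q,L_Q)$ is non-positively graded (so its dual is non-negatively graded in the appropriate sense), and that at the boundary $p = 2$ one obtains $D\mathrm{Hom}_{D^{\pi}\mathcal{W}}(L_Q,L_Q)$, which is finite-dimensional by condition (1) of Proposition \ref{prop:cytriple}. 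Thus the entire corollary reduces to a one-line invocation of Corollary \ref{cor:hom} applied to the triple of Theorem \ref{thm:plubming}.
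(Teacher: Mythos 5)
Your proposal is correct and matches the paper's own route: the corollary is obtained by specializing Corollary \ref{cor:hom} (with $d=2$) to the $3$-Calabi--Yau triple $(D^{\pi}\mathcal{W}(X_Q^{3}),D^{\pi}\mathcal{F}(X_Q^{3}),L_Q)$ furnished by Theorem \ref{thm:plubming}. The case-by-case matching of the ranges $p\leq 0$, $p=1$ (since $d-1=1$), and $p\geq 2$ is exactly as the paper intends, so nothing further is needed.
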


We investigate the ring structure on $\mathrm{Hom}^{*}_{D^{\pi}\mathcal{W}/D^{\pi}\mathcal{F}}(L_{Q},L_{Q})$. The product structure on the non-positive part $\mathrm{Hom}^{\leq 0}_{D^{\pi}\mathcal{W}/D^{\pi}\mathcal{F}}(L_{Q},L_{Q})$ is the same as that on $\mathrm{Hom}^{*}_{D^{\pi}\mathcal{W}}(L_{Q},L_{Q})$. We need to see the product with positive degree morphisms.

By the definition, we can find some positive degree morphisms in $\mathrm{Hom}^{*}_{D^{\pi}\mathcal{W}/D^{\pi}\mathcal{F}}(L_{Q},L_{Q})$. Note that since $\mathrm{Hom}^{*}_{D^{\pi}\mathcal{W}}(L_{Q},L_{Q})$ is supported in non-positive degrees, any positive degree morphism should be a newly added invertible morphism. The following lemma gives the examples of those new morphisms.

\begin{lem} \label{lem:cone}
Let $v_{i,\phi(i)}$ be the homogeneous element of $\mathrm{Hom}^{*}_{D^{\pi}\mathcal{W}}(L_{i},L_{\phi(i)})$. Then, the mapping cone of $v_{i,\phi(i)}$ belongs to $D^{\pi}\mathcal{F}$ for any $i$.

\begin{proof}
Consider the mapping cone $\mathrm{Cone}(v_{i,\phi_{i}})$ of $v_{i,\phi(i)}$ in $D^{\pi}\mathcal{W}$:
\begin{equation} \label{eqn:iso}
\hom_{\mathcal{W}}(L_{Q},L_{i}) \xrightarrow{\hspace{0.3em} \mu^{2}(v_{i,\phi(i)},-) \hspace{0.3em}} \hom_{\mathcal{W}}(L_{Q},L_{\phi(i)}).
\end{equation}
From the relations \eqref{eqn:rel2}, one can easily see that $\ker v_{i,\phi(i)} = 0$, i.e., concatenating with $v_{i,\phi(i)}$ is injective. The image consists of all paths containing at least one $v$-arrow. Hence, the cokernel can be represented by the paths which consist of only the $u$-arrows. Since such paths have the maximal length by Lemma \ref{lem:u}, there are finitely many such paths. Thus, the complex \eqref{eqn:iso} is in $D_{c}{\mathcal{W}}$.
\end{proof}
\end{lem}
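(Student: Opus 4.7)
The plan is to show that the mapping cone of $v_{i,\phi(i)}$, viewed as an object of $D^{\pi}\W$, has finite-dimensional total cohomology as a right $\W$-module, which places it in $D_c\W$; then, by the identification $D_c\W \simeq D^{\pi}\F$ that is part of the Calabi--Yau triple structure (Proposition \ref{prop:cytriple} and the discussion following it), the cone will belong to $D^{\pi}\F$.

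First, I would apply the right Yoneda functor to reduce the question to the two-term complex of right $\W$-modules
\begin{equation*}
\hom_{\W}(L_Q, L_i) \xrightarrow{\mu^2(v_{i,\phi(i)}, -)} \hom_{\W}(L_Q, L_{\phi(i)}),
\end{equation*}
whose cohomology is computed vertex-by-vertex using the explicit path-algebra model $\hom_{\W}(L_Q, L_Q) \simeq \mathbb{K}\Omega_Q/J$ from Theorem \ref{thm:H}. The task then becomes showing that post-composition with $v_{i,\phi(i)}$ has zero kernel and a finite-dimensional cokernel at every vertex $j \in Q_0$.

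Second, I would verify injectivity of left-multiplication by $v_{i,\phi(i)}$ on $(\mathbb{K}\Omega_Q/J)\cdot e_i$. Using the canonical representation \eqref{eqn:can} -- every path from $j$ to $i$ can be written as a product of $u$-arrows followed by a power of $v_{\phi(j),j} v_{j,\phi(j)}$, possibly preceded by a single $v$-arrow -- the relations \eqref{eqn:rel2} show that multiplication by $v_{i,\phi(i)}$ sends a canonical form to another canonical form with one extra $v$-factor, so no cancellation can occur and the kernel is trivial. Third, I would identify the cokernel: a path from $j$ to $\phi(i)$ lies in the image of $\mu^2(v_{i,\phi(i)}, -)$ precisely when its canonical form contains at least one $v$-arrow, so the cokernel at the vertex $j$ is spanned by those paths from $j$ to $\phi(i)$ that are built solely out of $u$-arrows. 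By Lemma \ref{lem:u}, such paths vanish as soon as they contain enough increasing $u$-arrows to force the appearance of $u_{n-1,n}u_{n,n-1}$ or $u_{21}u_{12}$, so at each vertex only finitely many nonzero classes remain; summing over the finite set $Q_0$ yields finite total dimension. Thus the cone lies in $D_c\W$, hence in $D^{\pi}\F$.

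The step I expect to be the main obstacle is the careful justification of both the injectivity and the explicit description of the cokernel, because the ideal $J$ mixes $u$- and $v$-arrows nontrivially; the normal-form analysis requires systematic use of Lemma \ref{lem:eqn} to push every word into canonical form before one can read off either claim. Once that bookkeeping is in place, the finite-dimensionality of the cokernel, and hence of the cone's cohomology, follows directly from the length bound in Lemma \ref{lem:u}.
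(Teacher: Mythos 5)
Your proposal is correct and takes essentially the same route as the paper's proof: both reduce to the two-term complex $\hom_{\mathcal{W}}(L_{Q},L_{i}) \to \hom_{\mathcal{W}}(L_{Q},L_{\phi(i)})$, establish injectivity of concatenation with $v_{i,\phi(i)}$ from the relations \eqref{eqn:rel2}, identify the cokernel with the $u$-only paths, and invoke Lemma \ref{lem:u} for finite-dimensionality. Your additional remarks (the vertex-by-vertex bookkeeping via canonical forms and the explicit appeal to $D_c\mathcal{W}\simeq D^{\pi}\mathcal{F}$) merely spell out steps the paper leaves implicit.
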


The above lemma implies that the morphism $v_{i,\phi(i)}$ in $\mathrm{Hom}^{*}_{D^{\pi}\mathcal{W}/D^{\pi}\mathcal{F}}(L_{i},L_{\phi(i)})$ has a formal inverse $v_{i,\phi(i)}^{-1}$ in $\mathrm{Hom}^{*}_{D^{\pi}\mathcal{W}/D^{\pi}\mathcal{F}}(L_{\phi(i)},L_{i})$ for any $1 \leq i \leq n$. We will show that they are enough to determine the positively graded part of $\mathrm{Hom}^{*}_{D^{\pi}\mathcal{W}/D^{\pi}\mathcal{F}}(L_{Q},L_{Q})$. To do that, we introduce a modification of the quiver $\Omega_{Q}$.

\begin{dfn} \label{dfn:newomega}
	Let $\overline{\Omega}_{Q}$ be a quiver obtained from $\Omega_{Q}$ by adding an arrow $v_{i,\phi(i)}^{-1} : \phi(i) \to i$ for each $i \in \Omega_{Q,0}$. The ideal $\overline{J}$ in $\mathbb{K} \overline{\Omega}_{Q}$ is generated by all the generators in \eqref{eqn:gen} (they are also elements of $\mathbb{K} \overline{\Omega}_{Q}$) and
$$v_{i,\phi(i)}^{-1}v_{i,\phi(i)} - e_{i},\;v_{i,\phi(i)}v_{i,\phi(i)}^{-1} - e_{\phi(i)}$$
for all $i \in \Omega_{Q,0}$.
\end{dfn}

Now, we can state the main theorem of this section.

\begin{thm} \label{thm:main}
There is a graded ring isomorphism $\phi : \mathbb{K}\overline{\Omega}_{Q}/\overline{J} \xrightarrow{\hspace{0.3em} \simeq \hspace{0.3em}} \mathrm{Hom}^{*}_{D^{\pi}\mathcal{W}/D^{\pi}\mathcal{F}}(L_{Q},L_{Q})$.

\begin{proof}
For the non-positively graded part, $\phi$ is defined in the same way as the isomorphism in Theorem \ref{thm:H}. The image of the arrow $v_{i,\phi(i)}^{-1}$ is a morphism in $\mathrm{Hom}^{*}_{D^{\pi}\mathcal{W}/D^{\pi}\mathcal{F}}(L_{\phi(i)},L_{i})$ given by
$$\begin{tikzcd}[row sep={4em,between origins}, column sep={5em,between origins}]
L_{\phi(i)} \ar[rd,"\mathrm{id}",swap] & & L_{i} \ar[ld,"v_{i,\phi(i)}"]\\
& L_{\phi(i)} &
\end{tikzcd}.$$
Then, we need to show that $\phi$ is well-defined. The only relations that we need to check are $v_{i,\phi(i)}^{-1}v_{i,\phi(i)} - e_{i}$, $v_{i,\phi(i)}v_{i,\phi(i)}^{-1} - e_{\phi(i)}$. The compositions of $\phi(v_{i,\phi(i)})$ and $\phi(v_{i,\phi(i)}^{-1})$ are given by
$$\begin{tikzcd}[row sep={4em,between origins}, column sep={5em,between origins}]
L_{i} \ar[rd,"v_{i,\phi(i)}",swap] & & L_{\phi(i)} \ar[ld,"\mathrm{id}"] \ar[rd,"\mathrm{id}",swap] & & L_{i} \ar[ld,"v_{i,\phi(i)}"]\\
& L_{\phi(i)} \ar[rd,"\mathrm{id}",swap] & & L_{\phi(i)} \ar[ld,"\mathrm{id}"] &\\
& & L_{\phi(i)} & &
\end{tikzcd} \mbox{and}
\begin{tikzcd}[row sep={4em,between origins}, column sep={5em,between origins}]
L_{\phi(i)} \ar[rd,"\mathrm{id}",swap] & & L_{i} \ar[ld,"v_{i,\phi(i)}",swap] \ar[rd,"v_{i,\phi(i)}"] & & L_{\phi(i)} \ar[ld,"\mathrm{id}"]\\
& L_{\phi(i)} \ar[rd,"\mathrm{id}",swap] & & L_{\phi(i)} \ar[ld,"\mathrm{id}"] &\\
& & L_{\phi(i)} & &
\end{tikzcd}.$$
The first one is equivalent to the identity by the following diagram
$$\begin{tikzcd}[row sep={4em,between origins}, column sep={8em,between origins}]
& L_{\phi(i)} \ar[d,"\mathrm{id}"] &\\
L_{i} \ar[rd,"\mathrm{id}",swap] \ar[ru,"v_{i,\phi(i)}"] \ar[r,"v_{i,\phi(i)}"] & L_{\phi(i)} & L_{i} \ar[ld,"\mathrm{id}"] \ar[lu,"v_{i,\phi(i)}",swap] \ar[l,"v_{i,\phi(i)}",swap]\\
& L_{i} \ar[u,"v_{i,\phi(i)}"] &
\end{tikzcd}.$$
Hence, $\phi$ is a well-defined ring homomorphism by the construction.

Next, we denote the degree $p$ piece of $\mathbb{K} \overline{\Omega}_{Q}/\overline{J}$ by $(\mathbb{K} \overline{\Omega}_{Q}/\overline{J})_{p}$. Then, we will show that
\begin{equation} \label{eqn:dim}
\dim (\mathbb{K} \overline{\Omega}_{Q}/\overline{J})_{p} = \dim (\mathbb{K} \overline{\Omega}_{Q}/\overline{J})_{2-p}
\end{equation}
 for $p \leq 0$. Note that each graded piece is finite dimensional. Recall that any path of negative degree from $i$ to $j$ can be written as
$$u_{j\pm1,j} \dots u_{j\pm k, j\pm(k-1)} u_{j\pm(k-1), j\pm k} \dots u_{i,i\pm1} v_{\phi(i),i} \dots v_{i,\phi(i)} = U_{j\pm k,j} U_{i,j\pm k} (v_{\phi(i),i} v_{i,\phi(i)})^{l}$$
if the number of $v$-arrows is even or
$$u_{j\pm1,j} \dots u_{j\pm k, j\pm(k-1)} u_{j\pm(k-1), j\pm k} \dots u_{\phi(i),\phi(i)\pm1} v_{i,\phi(i)} \dots v_{i,\phi(i)} = U_{j\pm k,j} U_{\phi(i),j\pm k} v_{i,\phi(i)} (v_{\phi(i),i} v_{i,\phi(i)})^{l}$$
if the number of $v$-arrows is odd for some $k$, $l \in \mathbb{Z}_{\geq 0}$. For any path from $i$ to $j$ of positive degree, its canonical representation is given by
$$u_{j\pm1,j} \dots u_{j\pm k, j\pm(k-1)} u_{j\pm(k-1), j\pm k} \dots u_{i,i\pm1} v_{i,\phi(i)}^{-1} \dots v_{\phi(i),i}^{-1} = U_{j\pm k,j} U_{i,j\pm k} (v_{i,\phi(i)}^{-1} v_{\phi(i),i}^{-1})^{l}$$
if the number of $v$-arrows is even, or
$$u_{j\pm1,j} \dots u_{j\pm k, j\pm(k-1)} u_{j\pm(k-1), j\pm k} \dots u_{\phi(i),\phi(i)\pm1} v_{\phi(i),i}^{-1} \dots v_{\phi(i),i}^{-1} = U_{j\pm k,j} U_{\phi(i),j\pm k} v_{\phi(i),i}^{-1} (v_{i,\phi(i)}^{-1} v_{\phi(i),i}^{-1})^{l}$$
if the number of $v$-arrows is odd for some $k$, $l \in \mathbb{Z}_{\geq 0}$.

We consider the case $i<j \leq \phi(j)<\phi(i)$ only. Note that we divide the cases because of the notational problem. In any other cases $\phi(j) \leq i \leq \phi(i) \leq j$, $\phi(i)<\phi(j) \leq j<i$, etc., there are only small modifications according to their order. Let $U_{j+k,j} U_{i,j+k} (v_{\phi(i),i} v_{i,\phi(i)})^{l}$ be a path of degree $p \leq 0$ from $i$ to $j$. By Lemma \ref{lem:u}, the inequality $j+k-i \leq j-1$, i.e., $k \leq i-1$ holds. Then there is always a path of degree $2-p$ from $j$ to $i$ which is given by
\begin{equation} \label{eqn:dual}
U_{i-k',i} U_{\phi(j),i-k'} v_{\phi(j),j}^{-1} (v_{j,\phi(j)}^{-1} v_{\phi(j),j}^{-1})^{l}
\end{equation}
for $k'$ such that $k + k' = i-1$. One can check the degree of this path by a direct computation. The concatenation of those two is
\begin{equation} \label{eqn:deg2}
\begin{split}
&U_{j+k,j} U_{i,j+k} (v_{\phi(i),i} v_{i,\phi(i)})^{l} U_{i-k',i} U_{\phi(j),i-k'} v_{\phi(j),j}^{-1} (v_{j,\phi(j)}^{-1} v_{\phi(j),j}^{-1})^{l} \\
&= U_{j+k,j} U_{i,j+k} U_{i-k',i} U_{\phi(j),i-k'} v_{\phi(j),j}^{-1} \\
&= U_{j+k,j} U_{i-k',j+k} U_{\phi(j),i-k'} v_{\phi(j),j}^{-1} \\
&= U_{i-k-k',j} U_{\phi(j),i-k-k'} v_{\phi(j),j}^{-1} \\
&= U_{1,j} U_{\phi(j),1} v_{\phi(j),j}^{-1} \\
&= U_{1,j} v_{1,n}^{-1} U_{j,n}
\end{split}
\end{equation}
Since $U_{1,j}$ and $U_{j,n}$ are of degree $0$, the degree of $U_{1,j} v_{1,n}^{-1} U_{j,n}$ is always $2$ for any $j$. Thus, the path given in \eqref{eqn:dual} is of degree $2-p$. Similarly, if we start with $U_{j-k,j} U_{\phi(i),j-k} v_{i,\phi(i)} (v_{\phi(i),i} v_{i,\phi(i)})^{l}$ of degree $p$, then we can find a path
$$U_{i-k',i} U_{j,i-k'} (v_{j,\phi(j)}^{-1} v_{\phi(j),j}^{-1})^{l+1}$$
such that $k + k' = i-1$. One can get the degree of this path in a similar way. This proves the equation $\dim (\mathbb{K} \overline{\Omega}_{Q}/\overline{J})_{p} = \dim (\mathbb{K} \overline{\Omega}_{Q}/\overline{J})_{2-p}$.

Finally, we prove the injectivity on each positive graded piece. Suppose that
$$\phi(U_{j\pm k,j} U_{i,j\pm k} (v_{i,\phi(i)}^{-1} v_{\phi(i),i}^{-1})^{l}) = 0$$
for some $l > 0$. Since $\phi$ is a graded ring homomorphism, we have
$$\phi(U_{j\pm k,j} U_{i,j\pm k}) = \phi(U_{j\pm k,j} U_{i,j\pm k} (v_{i,\phi(i)}^{-1} v_{\phi(i),i}^{-1})^{l}) \phi((v_{i,\phi(i)}v_{\phi(i),i})^{l})=0.$$
However, the degree of $U_{j\pm k,j} U_{i,j\pm k}$ is non-positive and $\phi$ is an isomorphism on that part. Thus, it gives a contradiction and $\phi$ is injective. We can conclude that $\phi$ is a graded ring isomorphism from \eqref{eqn:dim} and the injectivity.
\end{proof}
\end{thm}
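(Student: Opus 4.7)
My plan is to define $\phi$ piecewise on the non-positive and positive degree pieces using the existing identification for $D^\pi\mathcal{W}$, then verify the new relations and pin down the map by a dimension count driven by the $3$-Calabi--Yau duality of the quotient.

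On non-positive degrees I would define $\phi$ as the composition of the ring isomorphism $\mathbb{K}\Omega_Q/J \xrightarrow{\sim} \mathrm{Hom}^*_{D^\pi\mathcal{W}}(L_Q,L_Q)$ of Theorem \ref{thm:H} with the canonical isomorphism $\mathrm{Hom}^{\leq 0}_{D^\pi\mathcal{W}}(L_Q,L_Q) \xrightarrow{\sim} \mathrm{Hom}^{\leq 0}_{D^\pi\mathcal{W}/D^\pi\mathcal{F}}(L_Q,L_Q)$ provided by Corollary \ref{cor:hom}. On the new generator $v_{i,\phi(i)}^{-1}$ I would set $\phi(v_{i,\phi(i)}^{-1})$ to be the roof $L_{\phi(i)} \xleftarrow{\mathrm{id}} L_{\phi(i)}$, $L_i \xrightarrow{v_{i,\phi(i)}} L_{\phi(i)}$ viewed as a morphism in the Verdier quotient; this is a legitimate roof precisely because Lemma \ref{lem:cone} gives $\mathrm{Cone}(v_{i,\phi(i)}) \in D^\pi\mathcal{F}$. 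Well-definedness then reduces to checking the two new relations $v_{i,\phi(i)}^{-1}v_{i,\phi(i)} = e_i$ and $v_{i,\phi(i)}v_{i,\phi(i)}^{-1} = e_{\phi(i)}$, which follow by writing out the Gabriel--Zisman composition of roofs and observing that the resulting roofs are equivalent to the identity roof via the obvious comparison diagram.

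To prove $\phi$ is an isomorphism I would first produce a canonical normal form for elements of $\mathbb{K}\overline{\Omega}_Q/\overline{J}$, extending the calculus of Proposition \ref{prop:ieqj} and Proposition \ref{prop:ineqj}. Using the commutation rules \eqref{eqn:rel2} and Lemma \ref{lem:eqn} to push all $v$-arrows and $v^{-1}$-arrows to one end, every basis element from $i$ to $j$ can be written as $U_{j\pm k,j}\, U_{\ast, j\pm k}\, W$, where $W$ is a word in $v_{\cdot,\phi(\cdot)}^{\pm 1}$. Counting such canonical forms in each degree $p$, I would establish the key combinatorial duality
\[
\dim (\mathbb{K}\overline{\Omega}_Q/\overline{J})_p \;=\; \dim (\mathbb{K}\overline{\Omega}_Q/\overline{J})_{2-p}
\]
by constructing an explicit involution which sends a canonical form from $i$ to $j$ to a canonical form from $j$ to $i$, replacing the $u$-only subpath by its complement along the maximal $u$-path from $1$ to $n$ (so the two $u$-parts concatenate to the maximal path), swapping $v_{i,\phi(i)}$-arrows with $v_{i,\phi(i)}^{-1}$-arrows, and adjusting by a single $v^{-1}$ to handle the parity of the number of $v$-arrows. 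By Corollary \ref{cor:hom} this dimension equality is exactly the duality $\mathrm{Hom}^p \cong D\mathrm{Hom}^{2-p}$ on the quotient, so matching source and target dimensions reduces the positive part to the already established non-positive part.

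The main obstacle I expect is the degree bookkeeping in this combinatorial duality: the complementary $u$-path is parameterized by a non-negative integer $k'$ with $k+k' = i-1$ (or an analogous identity after permuting the roles of $i,j,\phi(i),\phi(j)$), and I must verify that such a $k'$ always exists within the admissible range dictated by Lemma \ref{lem:u}, simultaneously across the four configurations of $i,j$ relative to $\phi(i),\phi(j)$ and the two parities of the $v$-word. Once this dimension equality is in hand, I close by proving injectivity on each positive-degree piece: if $\phi$ annihilates a canonical form containing a non-trivial $v^{-1}$-word, then right-multiplying by the corresponding $v$-word produces an element in a non-positive degree on which $\phi$ is already known injective, forcing the original element to vanish in $\mathbb{K}\overline{\Omega}_Q/\overline{J}$; surjectivity of $\phi$ in each positive degree then follows automatically from the dimension count.
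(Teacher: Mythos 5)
Your proposal is correct and follows essentially the same route as the paper: defining $\phi(v_{i,\phi(i)}^{-1})$ as the roof legitimized by Lemma \ref{lem:cone}, checking the two inverse relations by composing roofs, establishing $\dim(\mathbb{K}\overline{\Omega}_Q/\overline{J})_p = \dim(\mathbb{K}\overline{\Omega}_Q/\overline{J})_{2-p}$ via canonical forms and the complementary-$u$-path pairing with $k+k'=i-1$, and deducing surjectivity in positive degrees from injectivity (by multiplying back by $v$-words) together with the dimension count and Corollary \ref{cor:hom}. The degree bookkeeping you flag as the main obstacle is exactly what the paper resolves by the case analysis on the relative order of $i,j,\phi(i),\phi(j)$.
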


Even though we consider the $A_{n}$ case, the same strategy can be applied to the $D$ cases and $E$ cases. By Definition \ref{dfn:omega} and relations \eqref{eqn:gen}, any path can be written as the canonical representation \eqref{eqn:can}. For $v$-arrows, Lemma \ref{lem:cone} holds also in these cases. Then, we add the formal inverses of $v$-arrows and get a new quiver as in Definition \ref{dfn:newomega}. Therefore, by the same computation, we can generalize the main theorem to any $ADE$ case. The only difference is that the notation becomes complicated since there is a trivalent vertex in $D$ and $E$ cases.

\begin{thm} \label{thm:main2}
For any Dynkin quiver $Q$, there is a graded ring isomorphism
$$\phi : \mathbb{K}\overline{\Omega}_{Q}/\overline{J} \xrightarrow{\hspace{0.3em} \simeq \hspace{0.3em}} \mathrm{Hom}^{*}_{D^{\pi}\mathcal{W}/D^{\pi}\mathcal{F}}(L_{Q},L_{Q}).$$
\end{thm}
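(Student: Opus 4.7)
The plan is to follow the $A_n$ argument of Theorem \ref{thm:main} essentially verbatim, replacing the linear combinatorics by the branching combinatorics imposed by the trivalent vertex in $D$ or $E$ diagrams. First I would define the map $\phi$ on generators: on the subalgebra $\mathbb{K}\Omega_Q/J \subset \mathbb{K}\overline{\Omega}_Q/\overline{J}$, set $\phi$ to be the isomorphism from Theorem \ref{thm:H} post-composed with the quotient functor $D^\pi\mathcal{W} \to D^\pi\mathcal{W}/D^\pi\mathcal{F}$, and on each new arrow $v_{i,\phi(i)}^{-1}$ take the roof $s^{-1} \circ \mathrm{id}$ with $s = v_{i,\phi(i)}$. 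Well-definedness on the added relations $v_{i,\phi(i)}^{-1}v_{i,\phi(i)} - e_i$ and $v_{i,\phi(i)}v_{i,\phi(i)}^{-1} - e_{\phi(i)}$ is then verified by the same roof-calculus diagrams used in the $A_n$ proof.

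The only geometric input needed is the generalization of Lemma \ref{lem:cone}: for every $i$, the mapping cone of $v_{i,\phi(i)}$ belongs to $D^\pi\mathcal{F}$. I would prove this by the identical argument. Concatenation with $v_{i,\phi(i)}$ is injective by inspection of the relations \eqref{eqn:rel2} (which are part of $J$ regardless of Dynkin type), its image consists of all paths containing at least one $v$-arrow, and the cokernel is spanned by $u$-only paths; that span is finite-dimensional thanks to a $D/E$ analogue of Lemma \ref{lem:u}. Once this is in place, $\phi$ is a well-defined graded ring homomorphism.

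To show $\phi$ is an isomorphism, I would compare dimensions piece by piece. On the non-positive part, $\phi$ is an isomorphism by Theorem \ref{thm:H}, and by Corollary \ref{cor:hom} the positive-degree pieces of the target satisfy $\dim \mathrm{Hom}_{D^\pi\mathcal{W}/D^\pi\mathcal{F}}(L_Q,L_Q[p]) = \dim \mathrm{Hom}_{D^\pi\mathcal{W}}(L_Q,L_Q[2-p])$ for $p \geq 2$. It therefore suffices to establish
\begin{equation*}
\dim(\mathbb{K}\overline{\Omega}_Q/\overline{J})_p = \dim(\mathbb{K}\overline{\Omega}_Q/\overline{J})_{2-p}
\end{equation*}
for all $p \leq 0$, together with injectivity of $\phi$ on each positive piece. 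I would extend the canonical form \eqref{eqn:can} to allow $v^{-1}$-factors in place of $v$-factors, and then, given a degree-$p$ canonical path from $i$ to $j$, construct an explicit dual degree-$(2-p)$ canonical path from $j$ to $i$ by flipping the $v$-part to its inverse and adjusting the $u$-part so that the concatenation telescopes to a degree-$2$ expression analogous to \eqref{eqn:deg2}. Injectivity then follows by the same argument as in $A_n$: any positive-degree canonical representative in the kernel of $\phi$, multiplied by an appropriate power of the compound $v$-arrow, forces a non-positive degree element into the kernel, contradicting the isomorphism there.

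The main obstacle will be the combinatorial bookkeeping at the trivalent vertex. In $A_n$, a $u$-path is controlled by the single counting inequality $I(P) \leq \min\{n-i, j-1\}$ of Lemma \ref{lem:u}, and Proposition \ref{prop:ieqj} yields a clean global normal form. For $D_n$, $E_6$, $E_7$, $E_8$, a $u$-path can enter and leave each of the three branches at the trivalent node, so the analogous length bound splits into separate excursion bounds per branch, and the telescoping identity used to produce the dual path must be formulated branch by branch with an explicit choice of which branch the ``return'' $U_{\ast,\ast}$ segments traverse. Working through this case analysis is the most labor-intensive part, but each step is of the same flavor as in the $A_n$ proof and introduces no new conceptual ingredient.
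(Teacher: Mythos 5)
Your proposal is correct and follows essentially the same route as the paper, which likewise proves the $D$/$E$ cases by rerunning the $A_n$ argument of Theorem \ref{thm:main}: canonical representatives from the relations \eqref{eqn:gen}, the analogue of Lemma \ref{lem:cone} to invert the $v$-arrows, the dimension symmetry between degrees $p$ and $2-p$ via Corollary \ref{cor:hom}, and injectivity on the positive part by multiplying back with $v$-arrows. The extra bookkeeping at the trivalent vertex that you flag is exactly what the paper identifies as the only difference.
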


\begin{exa}
Let $Q$ be the following $E_{6}$-quiver
$$\begin{tikzcd}
& & 2 \arrow[d] & & \\
1 \arrow[r] & 3 \arrow[r] & 4 \arrow[r] & 5 \arrow[r] & 6. 
\end{tikzcd}$$
The quiver $\Omega_{Q}$ is given by
$$\begin{tikzcd}[column sep={6em,between origins}, row sep={3em,between origins}]
& & 2 \arrow[d, bend left=20, "0"] \arrow[in=160, out=200, loop, looseness=5,"-7"]& & \\
1 \arrow[r, bend left=20, "0"] \arrow[rrrr, bend left=60, "-5"] & 3 \arrow[r, bend left=20, "0"] \arrow[l, bend left=20, "-1"] \arrow[rr, bend left=80, "-6"] & 4 \arrow[r, bend left=20, "0"] \arrow[l, bend left=20, "-1"] \arrow[u, bend left=20, "-1"] \arrow[in=-70, out=-110, loop, looseness=8,"-7",swap] & 5 \arrow[r, bend left=20, "0"] \arrow[l, bend left=20, "-1"] \arrow[ll, bend left=90, "-8", swap] & 6. \arrow[l, bend left=20, "-1"] \arrow[llll, bend left=60, "-9", swap]
\end{tikzcd}$$
The quiver $\overline{\Omega}_{Q}$ is obtained by adding the formal inverses $v_{i,\phi(i)}^{-1}$. Hence, we can compute $\mathrm{Hom}^{*}_{D^{\pi}\mathcal{W}/D^{\pi}\mathcal{F}}(L_{Q},L_{Q})$. We give the computational results for each $L_{i}$.
\begin{enumerate}
\item $\mathrm{Hom}^{*}_{D^{\pi}\mathcal{W}/D^{\pi}\mathcal{F}}(L_{1},L_{1}) \cong \mathbb{K}[x_{1},y_{1}^{\pm1}]/\langle x_{1}^{2} \rangle$, $|x_{1}| = -9$, $|y_{1}| = -14$.
\item $\mathrm{Hom}^{*}_{D^{\pi}\mathcal{W}/D^{\pi}\mathcal{F}}(L_{2},L_{2}) \cong \mathbb{K}[x_{2}^{\pm1}]$, $|x_{2}| = -7$.
\item $\mathrm{Hom}^{*}_{D^{\pi}\mathcal{W}/D^{\pi}\mathcal{F}}(L_{3},L_{3}) \cong \mathbb{K}[x_{3},y_{3}^{\pm1},z_{3}]/\langle z_{3}^{2}, x_{3}^{2} \rangle$, $|x_{3}| = -8$, $|y_{3}| = -14$, $|z_{3}| = -1$.
\item $\mathrm{Hom}^{*}_{D^{\pi}\mathcal{W}/D^{\pi}\mathcal{F}}(L_{4},L_{4}) \cong \mathbb{K} \langle x_{4}^{\pm1},z_{4},w_{4} \rangle / \langle x_{4}z_{4} - z_{4}x_{4}, x_{4}w_{4} - w_{4}x_{4}, z_{4}^{3}, w_{4}^{3}, (z_{4}-w_{4})^{2} \rangle$, $|x_{4}| = -7$, $|z_{4}| = -1$, $|w_{4}| = -1$.
\item $\mathrm{Hom}^{*}_{D^{\pi}\mathcal{W}/D^{\pi}\mathcal{F}}(L_{5},L_{5}) \cong \mathbb{K}[x_{5},y_{5}^{\pm1},z_{5}]/\langle z_{5}^{2}, x_{5}^{2} \rangle$, $|x_{5}| = -8$, $|y_{5}| = -14$, $|z_{5}| = -1$.
\item $\mathrm{Hom}^{*}_{D^{\pi}\mathcal{W}/D^{\pi}\mathcal{F}}(L_{6},L_{6}) \cong \mathbb{K}[x_{6},y_{6}^{\pm1}]/\langle x_{6}^{2} \rangle$, $|x_{1}| = -9$, $|y_{1}| = -14$.
\end{enumerate}
Note that at the vertex $4$, the relation of $u$-arrows is given by
$$u_{54}u_{45} - u_{34}u_{43} - u_{24}u_{42}$$
and we set $z_{4} = u_{54}u_{45}$ and $w_{4} = u_{34}u_{43}$ which do not commute with each other.

For the cases of $i \neq j$, one can get a similar description as in Proposition \ref{prop:ineqj}.
\end{exa}

We close this section with a few remarks.

\begin{rmk}
By Theorem \ref{thm:GGV}, the morphism spaces $\mathrm{Hom}^{*}_{D^{\pi}\mathcal{W}/D^{\pi}\mathcal{F}}(L_i,L_j)$ in this section 
are isomorphic to the Rabinowitz Floer cohomology of cocores $L_i$ and $L_j$ in $X^3_{Q}$.
\end{rmk}

\begin{cor}
Let $L_{i}$ and $L_{j}$ be two Lagrangian cocores in $X_{Q}^{d+1}$. The Rabinowitz Floer cohomology from $L_{i}$ to $L_{j}$ is isomorphic to $e_{j} \cdot \mathbb{K} \overline{\Omega}_{Q}/\overline{J} \cdot e_{i}$ as a graded vector space.
\end{cor}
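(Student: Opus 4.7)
The corollary is a clean packaging of two results already in hand. The plan is to combine the Ganatra--Gao--Venkatesh quasi-equivalence with Theorem~\ref{thm:main2} (and its stated extension to all $d \geq 2$ from Remark~\ref{rmk:general}) and pick out the appropriate idempotent components.

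First I would invoke Theorem~\ref{thm:GGV}. Subsection~\ref{subsection:plumbing} confirms that for $X_Q^{d+1}$ the compact Fukaya category $\mathcal{F}$ is a Koszul dual subcategory of $\mathcal{W}$ (a fact from~\cite{etg-lek17}), so Theorem~\ref{thm:GGV} yields a quasi-equivalence
$$D^\pi \mathcal{RW}(X_Q^{d+1}) \simeq D^\pi\mathcal{W}(X_Q^{d+1})/D^\pi\mathcal{F}(X_Q^{d+1}).$$
Under this quasi-equivalence each cocore disk $L_i$ on the Rabinowitz side is sent to the same Lagrangian viewed in the quotient, so the graded vector space $\mathrm{RFH}^*(L_i, L_j)$ is identified with $\mathrm{Hom}^*_{D^\pi\mathcal{W}/D^\pi\mathcal{F}}(L_i, L_j)$.

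Next I would appeal to Theorem~\ref{thm:main2}, in the generalized form covering all $d \geq 2$. This provides the graded ring isomorphism
$$\phi : \mathbb{K}\overline{\Omega}_Q/\overline{J} \xrightarrow{\ \simeq\ } \mathrm{Hom}^*_{D^\pi\mathcal{W}/D^\pi\mathcal{F}}(L_Q, L_Q), \qquad L_Q = \bigoplus_{i \in Q_0} L_i.$$
By the construction of $\phi$, the trivial path $e_i$ at vertex $i \in Q_0$ corresponds to the identity endomorphism of the summand $L_i$, and $\{e_i\}_{i \in Q_0}$ forms a complete family of orthogonal idempotents decomposing $\mathrm{id}_{L_Q}$.

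Finally, for the direct sum decomposition $L_Q = \bigoplus L_i$ one has the general matrix-block identity
$$\mathrm{Hom}^*(L_i, L_j) \cong \mathrm{id}_{L_j} \cdot \mathrm{Hom}^*(L_Q, L_Q) \cdot \mathrm{id}_{L_i},$$
so transporting this through $\phi^{-1}$ and then through the Rabinowitz identification produces the claimed isomorphism $\mathrm{RFH}^*(L_i, L_j) \cong e_j \cdot \mathbb{K}\overline{\Omega}_Q/\overline{J} \cdot e_i$ of graded vector spaces. No genuine obstacle remains; the only point requiring care is the $d \geq 3$ enhancement of Theorem~\ref{thm:main2}, but as Remark~\ref{rmk:general} observes, once the minimal model of $\Gamma_Q$ in higher Calabi--Yau dimension is known the same bookkeeping of Section~\ref{sec:5} goes through verbatim.
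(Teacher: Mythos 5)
Your proposal is correct and matches the paper's intended argument exactly: the corollary is stated as an immediate consequence of the preceding remark (which applies Theorem \ref{thm:GGV} to identify $\mathrm{Hom}^*_{D^\pi\mathcal{W}/D^\pi\mathcal{F}}(L_i,L_j)$ with Rabinowitz Floer cohomology) together with Theorem \ref{thm:main2} and the idempotent decomposition $e_j \cdot \mathbb{K}\overline{\Omega}_Q/\overline{J}\cdot e_i \cong \mathrm{Hom}^*(L_i,L_j)$. Your caveat about the $d\geq 3$ case resting on Remark \ref{rmk:general} is the same caveat the paper itself makes.
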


\begin{rmk}
In \cite{bae-kwo21}, the authors computed the v-shaped Floer homology of some real Lagrangians in the Milnor fibers of Brieskorn--Pham singularities. Especially, for the $A_{n}$ case, two cocores $L_{1}$ and $L_{n}$ are realized by some real Lagrangians and we have the following isomorphism (over $\mathbb{Z}_{2}$ coefficient)
$$\widecheck{\mathrm{HW}}^{*}(L_{1},L_{1}) \cong \widecheck{\mathrm{HW}}^{*}(L_{n},L_{n}) \cong \mathbb{Z}_{2}[x,y,y^{-1}]/\langle x^{2} \rangle \cong \mathrm{Hom}^{*}_{D^{\pi}\mathcal{W}/D^{\pi}\mathcal{F}}(L_{1},L_{1}) \cong \mathrm{Hom}^{*}_{D^{\pi}\mathcal{W}/D^{\pi}\mathcal{F}}(L_{n},L_{n}).$$
\end{rmk}

\begin{rmk}
Under the isomorphisms, there is a non-degenerate pairing $\beta' : \mathbb{K} \overline{\Omega}_{Q}/\overline{J} \times \mathbb{K} \overline{\Omega}_{Q}/\overline{J} \to \mathbb{K}$ on $\mathbb{K} \overline{\Omega}_{Q}/\overline{J}$. We can decompose them according to the idempotent decomposition
$$\beta'_{ij} : e_{i} \cdot (\mathbb{K} \overline{\Omega}_{Q}/\overline{J})_{2-p} \cdot e_{j} \otimes e_{j} \cdot (\mathbb{K} \overline{\Omega}_{Q}/\overline{J})_{p} \cdot e_{i} \to \mathbb{K}.$$
Let $P_{1}$ be a path of degree $p$ from $i$ to $j$ and $P_{2}$ be a path of degree $2-p$ from $j$ to $i$. Then, $\beta'_{ij}(P_{2},P_{1})$ is defined by the coefficient of $U_{1,j} v_{1,n}^{-1} U_{j,n}$ which appears in the proof of Theorem \ref{thm:main}. It describes the non-degenerate pairing on $D^{\pi}\mathcal{W}/D^{\pi}\mathcal{F}$ introduced in \cite{ami09} (up to a nonzero scalar).
\end{rmk}

\bibliographystyle{amsalpha}
\bibliography{Bibliography}

\end{document}